\renewcommand*{\backrefalt}[4]{
	\ifcase #1 %
	No citation.%
	\or
	Citation on page #2.%
	\else
	Citation on pages #2.%
	\fi}
\newtheorem{theorem}{Theorem}[section]
\newtheorem{corollary}[theorem]{Corollary}
\newtheorem{lemma}[theorem]{Lemma}
\newtheorem{proposition}[theorem]{Proposition}
\theoremstyle{definition}
\newtheorem{definition}[theorem]{Definition}
\theoremstyle{remark}
\newtheorem{remark}[theorem]{\sc Remark}
\newtheorem{example}[theorem]{\sc Example}
\newtheorem{question}[theorem]{\sc Question}
\newcommand{\Sing}{{\rm{Sing\hspace{2pt}}}}
\newcommand{\rank}{{\rm{rank\hspace{2pt}}}}
\newcommand{\Disc}{{\rm{Disc\hspace{2pt}}}}
\renewcommand{\d}{{\rm{d}}}
\renewcommand{\aa}{{\rm{a}}}
\newcommand{\e}{\varepsilon}
\newcommand{\dd}{{\rm{d}}}
\newcommand{\m}{\setminus}
\newcommand{\cT}{{\mathcal T}}
\newcommand{\bR}{{\mathbb R}}
\newcommand{\bC}{{\mathbb C}}
\newcommand{\bW}{{\mathbb W}}
\newcommand{\R}{\mathbb{R}}
\newcommand{\C}{\mathbb{C}}
\def\dsum{\displaystyle \sum}
\def\dlim{\displaystyle \lim}
\begin{document}
	
	\title{Some remarks about $ \rho $-regularity for real analytic maps}

	\author{Maico Ribeiro, Ivan Santamaria and  Thiago da Silva}
	
	\address{UFES,Universidade Federal do Espírito Santo,  Av. Fernando Ferrari, 514 -CEP 29.075-910 Vitória, Espirito Santo,  Brazil}
	\email{maico.ribeiro@ufes.br}
	
	\address{UFES,Universidade Federal do Espírito Santo,  Av. Fernando Ferrari, 514 -CEP 29.075-910 Vitória, Espirito Santo,  Brazil}
	\email{thiago.silva@ufes.br}
	
	\address{UFES,Universidade Federal do Espírito Santo,  Av. Fernando Ferrari, 514 -CEP 29.075-910 Vitória, Espirito Santo,  Brazil}
	\email{ivansantamaria@ufes.br}

	\begin{abstract}
		In this paper,  we discuss the concept of $\rho$-regularity of analytic map germs and its close relationship with the existence of locally trivial smooth fibrations, known as the Milnor fibrations. The presence of a Thom regular stratification or the Milnor condition (b) at the origin, indicates the transversality of the fibers of the map $G$ with respect to the levels of a function $\rho$, which guarantees $\rho$-regularity. Consequently, both conditions are crucial for the presence of \textit{open book} structures and the Milnor fibrations. The work aims to provide a comprehensive overview of the main results concerning the existence of Thom regular stratifications and the Milnor condition (b) for germs of analytic maps. It presents strategies and criteria to identify and ensure these regularity conditions and discusses situations where they may not be satisfied. The goal is to understand the presence and limitations of these conditions in various contexts.
	\end{abstract}
	
	\maketitle

	\section{Introduction}

	The transversality of the fibers of a map germ $G:(\mathbb{K}^{m},0) \rightarrow (\mathbb{K}^{p}, 0)$, $m\ge p >1$, $\mathbb{K} = \bR$ or $\bC$, at the levels of a function $\rho$, which defines the origin, is called \textit{$\rho$-regularity} of the map. The $\rho$-regularity is a condition for the existence of locally trivial smooth fibration and has been used in the local (stratified) sense by Thom, Milnor, Mather, Looijenga, Bekka, e.g. \cite{Be,Lo,Mi,Th1,Th2}, and more recently in \cite{dST1,ACT}, and \cite{CSS, CSS2}, and also employed in the global case in the references \cite{ACT-inf,DRT,NZ,Ti2,Ti3}. 
	
	\vspace{0.3cm}
	
	The close relationship between $\rho$-regularity and the existence of fibered structures defined by a map germ $G$ and consequently, with the study of topological properties of maps and singularities of associated spaces, has led to the need to create strategies to find natural and convenient tools to ensure such condition to produce interesting classes of maps $G$ with  Milnor fibration structures.
	
	\vspace{0.3cm}
	
	The search for such tools has resulted in a series of works published since the 1960s, for example, \cite{HL,HMS,Mi,Sa1,Sa2,Le} and \cite{ACT-inf,dST1,ACT,CSS,CSS2,DRT,Ma,Mi,NZ,Th1,Th2,Ti2,Ti3} and more recently, \cite{ART,ADRS,CT23,JT1,JT2,PT,Ri}.
	
	\vspace{0.3cm}
	
	In \cite{HL} and \cite{Le}, Hamm and Lê proved that the existence of a \textit{Thom regular} stratification of some holomorphic functions $f: (\mathbb{C}^n, 0) \to (\mathbb{C}, 0)$, with $n > 1$, ensures the existence of the Milnor tube fibration. For the case of germs of holomorphic or real analytic maps   $G: (\mathbb{K}^{m}, 0) \rightarrow (\mathbb{K}^{p}, 0)$,  recent results have shown that the existence of the Milnor fibration in the tube can also be guaranteed by the existence of a Thom regular stratification or by the absence of points from a set called the \textit{Milnor set} of the map $G$, in a conical neighborhood of the manifold $V_G:=G^{-1}(0)$. The latter condition is known as the \textit{Milnor condition (b)} at the origin.
	
	\vspace{0.3cm}

	Such results have made clear the close relationship between the existence of a Thom regular stratification or the Milnor condition (b) and the presence of fibered structures. In reality, the connection between the existence of these regularity conditions and the presence of a fibration structure associated with a germ of an analytic map is established through $\rho$-regularity. Both the existence of a Thom stratification and the Milnor condition (b) imply $\rho$-regularity.
	
	\vspace{0.3cm}
	
	The existence of a Thom stratification is a crucial indication of the transversality of the fibers of the map $G$ to the levels of the function $\rho$ that defines the origin. This is essential to guarantee the presence of "open book" structures locally, and in turn, the Milnor fibration on the tube and on the sphere.
	
	\vspace{0.3cm}
	
	On the other hand, the Milnor condition (b) also leads to $\rho$-regularity, providing an additional guarantee of the existence of a conical neighborhood of $V_G$ where there are no points from the Milnor set of $G$. This ensures the regularity of the map's fibers with respect to the levels of $\rho$.

	\vspace{0.3cm}

	This work aims to provide a detailed survey of the main and most recent results concerning the existence of Thom regular stratifications and the Milnor condition (b) at the origin for germs of analytic maps. We will address some new general conditions under which $G: (\mathbb{R}^m, 0) \to (\mathbb{R}^p, 0)$, with $m \geq p > 1$ and $\textrm{dim}\, \Disc G \geq 0$, is Thom regular or satisfies the Milnor condition (b) at the origin.
	
	\vspace{0.3cm}
	
	Throughout the work, we will present strategies and criteria that allow us to identify and guarantee the existence of Thom regular stratifications, as well as the fulfillment of the Milnor condition (b) and the relationship between these two regularity conditions. Additionally, we will also address situations where such conditions are not satisfied, which is equally relevant to understand the limitations of these structures in certain contexts.
	
	\vspace{0.3cm}

	The paper is organized as follows. In Section 2, we introduce the notion of $\rho$-regularity and demonstrate how this condition is directly related to the existence of fibration structures. In Section 3, we present the concept of Thom regularity for analytic map germs and show that this condition implies $\rho$-regularity. In Section 4, we provide several criteria to ensure Thom regularity for analytic map germs and include a series of examples to illustrate the presented results. In Section 5, we introduce the Thom Irregular set and present some novel criteria to guarantee when Thom regularity is not satisfied. We conclude the work by discussing the Milnor condition (b). We show that this condition also implies $\rho$-regularity. Furthermore, we present some recent criteria to ensure the Milnor condition (b) for germ compositions and finally, we compare the Milnor condition (b) with Thom regularity.

	
	\section{The $ \rho $-regularity and the existence of the Milnor tube fibrations}

	In this section, we present the concept of the Milnor tube fibration for a germ of a real analytic map and its relationship with the $ \rho $-regularity. The investigation of these fibrations has been a subject of extensive research by mathematicians since the 1980s. For more comprehensive details, you can refer to the works of \cite{ADER,Ma,Mi,Ri,Le}, and \cite{ART} for additional generalities.
	
	\vspace{0.2cm}
	
	Let $G:(\bR^m,0) \to (\bR^p,0),$ $m\geq p\geq 2,$ be a non-constant real analytic map germ and let $G: U \to \bR^{m},$ $G(0)=0$ be a representative of the germ, where $U \subseteq \mathbb{R}^{m}$ is an open set and $G(x)=(G_1(x),G_2(x), \ldots, G_p(x)).$ From now on we will denote a germ and its representative by the same notation.
	
	\vspace{0.3cm}
	
	Denote the zero locus $ G^{-1}(0)$ of $G$ by $ V_G$ and the singular set by $\Sing G:=\{x\in U: \rank(d G(x))~\text{is not maximal}\}$, where $\d G(x)$ denotes the Jacobian matrix of $G$ at $x$. The discriminant set of $G$ is given by $\Disc(G):=G(\Sing(G)).$ We say that $G$ has an isolated critical value at origin if $\Disc(G)=\{0\}$, that is, $ \Sing G \subseteq V_G $. If $ \Sing G \not\subseteq V_G $, then $ \Disc G $ is positive dimensional.
	
	\vspace{0.3cm}
	
	In this paper, all analytic map germs will be considered non-constant. 
	Moreover, as already mentioned above, without loss of generality, all sets and maps will be considered as a germ at the origin, once they are well defined. 
	
	\vspace{0.3cm}
	
	\begin{definition} We say that an analytic map germ $G:(\bR^m,0) \to (\bR^p,0)$ admits a Milnor's tube fibration if there exist $\epsilon_{0}>0$ and $\eta>0$ such that for all $0< \epsilon \leq \epsilon_{0}$, in the closed ball $\bar{B}^M_{\epsilon}$ the restriction map 
		\begin{equation}\label{tmil}
			G_{|}:\bar{B}^m_{\epsilon}\cap G^{-1}(\bar{B}_{\eta}^{p}-\{0\})\to \bar{B}_{\eta}^{p}-\{0\}.
		\end{equation}
		
		\noindent is a locally trivial smooth fibration for all $0<\eta \ll \epsilon.$ Moreover, the diffeomorphism type does not depend on the choice of $\epsilon$ and $\eta$.
		
	\end{definition}
	
	
	
	To state the existence of the tube fibration we need to introduce some standard notation and definitions. Let $U \subset \mathbb{R}^m$ be an open set, $0\in U$, and let $\rho: U \to \mathbb{R}_{\ge 0}$ be a smooth nonnegative and proper function that is zero only at the origin.

	\vspace{0.3cm}

	

	\vspace{0.2cm}
	
	\begin{definition}\label{d2} Let 
		$G:(\bR^{m},0) \rightarrow (\bR^{p}, 0)$, $m > p > 0$, be a non-constant map germ, and let $V_G = G^{-1}(0)$. If for any $\e > 0$ sufficiently small, there exists a neighborhood $N_\e$ of $\rho^{-1}(\e) \cap V_G$ in $\rho^{-1}(\e)$ such that for any $x \in N_\e \setminus V_G$, we have $\rho^{-1}(\e)  \pitchfork_x G^{-1}(G(x))$, then we say that $G$ is $\rho$-regular.
	\end{definition}
	
	\vspace{0.2cm}
	
	Let us emphasize that $\rho$-regularity depends directly on the choice of the function $\rho$. Recently, the authors of \cite{ADRS} constructed a class of examples where it is possible for a map to satisfy $\rho$-regularity concerning one choice of $\rho$ but not for a different choice, see \cite[Observation 4.13]{ADRS}. It is important to highlight that, up to now, this is the first work in the literature that managed to present clear and detailed examples addressing the crucial question of the dependence or relationship between different choices for the function $\rho$. 
	
	\vspace{0.2cm}
	
	For example, if we consider $\rho: U \to \bR_{\geq 0}$ to be the Euclidean distance to the origin $\rho(x) = \|x\|^2$, which clearly defines the origin, one has that $\rho^{-1}(\e) = S_\e^{m-1}$,  for any $\e > 0$.

	\vspace{0.2cm}
	
	By providing concrete examples, the paper \cite{ADRS} offers a new perspective on this interaction, enabling researchers to better explore this connection and its implications in different contexts. For instance, consider the following question:
	
	\begin{question}\label{q3}
		After considering the Euclidean distance function $\rho(x)=\|x\|^{2}$, what adaptations are necessary for all the results stated with this specific choice to extend to any function $\rho$ as defined above?
	\end{question}

	\vspace{0.2cm}
	
	Next, we state a result regarding the existence of a Milnor tube fibration using $\rho$-regularity. In light of Question \eqref{q3}, from now on, we will consider only the Euclidean distance function $\rho(x)=\|x\|^{2}$.
	
	\vspace{0.2cm}
	
	\begin{theorem}\label{ttf}
		Let $G:(\bR^m,0) \to (\bR^p,0),$ with $m\ge p\ge1$, be a non-constant  analytic map germ, such that $\Disc G = \{0\}$. If $G$ is $\rho$-regular, then it admits a Milnor tube fibration \eqref{tmil}.
	\end{theorem}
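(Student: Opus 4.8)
The plan is to realize the restriction map \eqref{tmil} as a proper submersion between manifolds with corners and then invoke Ehresmann's fibration theorem. Fix $\epsilon \le \epsilon_0$ small and consider the total space $E := \bar B_\epsilon^m \cap G^{-1}(\bar B_\eta^p - \{0\})$. I would regard $E$ as a manifold with corners whose boundary consists of two faces: the \emph{free} face $F_1 := \{x \in E : \|x\| = \epsilon\}$, lying on the sphere $S_\epsilon^{m-1}$, and the face $F_2 := \{x \in E : \|G(x)\| = \eta\}$, the preimage of the target sphere $S_\eta^{p-1}$, the two meeting along the corner $F_1 \cap F_2$. Since $F_2$ is the preimage of the base boundary $S_\eta^{p-1}$ under $G$, once $G$ is a submersion it is automatically compatible with the boundary of the base, so the only nontrivial boundary requirement is a submersion property along $F_1$.

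First I would record the two routine ingredients. Because $\Disc G = \{0\}$ means $\Sing G \subseteq V_G$, every $y$ with $0 < \|y\| \le \eta$ is a regular value of $G$; hence $G$ is a submersion at each interior point of $E$ and each fiber $G^{-1}(y)$ is a smooth manifold. Properness of $G_|$ is equally direct: the preimage of a compact $K \subseteq \bar B_\eta^p - \{0\}$ is $\bar B_\epsilon^m \cap G^{-1}(K)$, which is closed and bounded, hence compact, as $K$ stays bounded away from $0$.

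The heart of the argument is the face $F_1$, and this is where $\rho$-regularity enters and where I expect the main difficulty. The key is the identity, valid at any $x$ with $G(x) \ne 0$ and $x \notin \Sing G$,
\[
S_\epsilon^{m-1} \pitchfork_x G^{-1}(G(x)) \iff d\big(G|_{S_\epsilon^{m-1}}\big)(x) \text{ is surjective},
\]
which holds because $T_x G^{-1}(G(x)) = \ker dG(x)$ and $dG(x)$ is already onto, so transversality is equivalent to $dG(x)\big(T_x S_\epsilon^{m-1}\big) = \bR^p$. Thus $\rho$-regularity delivers precisely the submersion of $G$ along $F_1$, but only on the neighborhood $N_\epsilon$ of $S_\epsilon^{m-1} \cap V_G$ furnished by Definition \ref{d2}. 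The delicate point is therefore to check that, after shrinking, $F_1 \subseteq N_\epsilon$. I would argue by contradiction and compactness: if there were points $x_k \in S_\epsilon^{m-1}$ with $0 < \|G(x_k)\| \to 0$ and $x_k \notin N_\epsilon$, a convergent subsequence would limit to some $x_* \in S_\epsilon^{m-1} \cap V_G$, contradicting that $N_\epsilon$ is an open neighborhood of this set. Hence there is $\eta \ll \epsilon$ with $\{x \in S_\epsilon^{m-1} : 0 < \|G(x)\| \le \eta\} \subseteq N_\epsilon$, so the transversality, equivalently the boundary submersion along $F_1$, holds throughout.

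With the interior submersion, the properness, and the submersion along the free face $F_1$ in hand, Ehresmann's theorem for manifolds with corners gives that $G_|$ is a locally trivial smooth fibration over $\bar B_\eta^p - \{0\}$. Finally, the independence of the diffeomorphism type on $\epsilon$ and $\eta$ follows by the same mechanism applied to the families obtained by varying the two radii, using the conical structure of $(\bar B_\epsilon^m, V_G)$ near the origin to identify the fibers for different admissible choices.
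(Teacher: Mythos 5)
Your proposal is correct and follows essentially the same route as the paper: show that the restriction is a proper submersion---using $\Disc G=\{0\}$ to get the submersion property at interior points and $\rho$-regularity to get it along the sphere---and then conclude by Ehresmann's theorem. The only differences are technical rather than substantive: the paper fibers over the open punctured ball $B^p_\eta\setminus\{0\}$ so as to stay in the manifolds-with-boundary setting instead of your manifolds-with-corners formulation, and your explicit compactness argument showing that the collar $\{x\in S^{m-1}_\epsilon : 0<\|G(x)\|\le \eta\}$ lies inside $N_\epsilon$ for $\eta$ small is a welcome filling-in of a step the paper only asserts (namely that $\rho$-regularity makes \eqref{e11} a submersion for $\epsilon,\eta$ sufficiently small).
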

	
	\begin{proof} Considering the critical points of the map:
		\begin{equation}\label{e11}
			G_{|}: S_{\e}^{m-1}\cap G^{-1}(B^{p}_{\eta} \setminus \{0\}) \to B^{p}_{\eta} \setminus \{0\},
		\end{equation}
		we see that these points are precisely the points $x \in S_{\e}^{m-1}\cap G^{-1}(B^{p}_{\eta} \setminus \{0\}) $ for which $ S_{\e}^{m-1} \not \pitchfork_x G^{-1}(G(x)) $. From the $\rho$-regularity of the map, it follows that for $\e>0$ and $\eta>0$ sufficiently small, \eqref{e11} is a smooth submersion.
		
		Given the hypothesis $\Disc G = \{0\}$, the map $ G_{|}: B_{\e}^{m}\cap G^{-1}(B^{p}_{\eta} \setminus \{0\} ) \to B^{p}_{\eta} \setminus \{0\}  $ is also a submersion and is surjective. Hence, we have that:
		\begin{equation}\label{f2eq9}
			G_{|}:\overline{B}^m_\e \cap G^{-1}(B^{p}_\eta \setminus \{0\} ) \to B^{p}_\eta \setminus \{0\} 
		\end{equation}
		is a smooth submersion over a manifold with boundary. We claim that \eqref{f2eq9} is proper. Indeed, consider $K \subset {B}^{p}_\eta \setminus \{0\} $ as a compact set. Since $G^{-1}(K) \subset G^{-1}(B^{p}_\eta \setminus \{0\})$, then $\overline{B}^m_\e\cap G^{-1}({B}^{p}_{\eta} \setminus \{0\} )\cap G^{-1}(K)=\overline{B}_{\e}^{m} \cap G^{-1}(\overline{B}^p_\eta)\cap G^{-1}(K)$. Furthermore, the map $G_{|}:\overline{B}^m_\e \cap G^{-1}(\overline{B}^p_\eta) \to \overline{B}^p_\eta$ is proper since it is continuous over a compact domain and $K \subset \overline{B}^{p}_\eta$. Thus, $\overline{B}^m_\e\cap G^{-1}({B}^{p}_{\eta} \setminus \{0\} )\cap G^{-1}(K)$ is compact. We can now apply Ehresmann's Theorem to ensure the existence of the Milnor tube fibration.
		
	\end{proof}

	\section{Thom regularity}

	In this section, we present the definition of Thom regularity and some related results.

	\vspace{0.3cm}

	Let $G: (\mathbb{R}^{m},0) \to (\mathbb{R}^{p},0)$ be a germ of an analytic map, and let $\mathbb{W}$ be a Whitney stratification of $G$. Let $W_\alpha$ and $W_\beta$ be strata of $\mathbb{W}$ such that $W_\alpha \subset \overline{W}_\beta$, and the restrictions $G_{|_{W_{\alpha}}}$ e $G_{|_{W_{\beta}}}$ have constant ranks. Let $x \in W_{\alpha}$. The following definitions can be found in \cite{GLPW} and \cite{JM}.
	
	\vspace{0.3cm}
	
	\begin{definition}\label{defGL2}
		We say that $W_{\beta}$ is \textit{Thom regular over $W_{\alpha}$ at $x$ with respect to $G$}, or equivalently, \textit{the pair $(W_\beta, W_\alpha)$ satisfies the condition of \textbf{Thom $(\aa_G)$-regularity} at $x$}, if the following condition holds:
		
		\vspace{0.3cm}
		
		\begin{itemize}
			\item [($\ast$)] Let $\{x_n\} \subset W_{\beta}$ be a sequence converging to $x$. If $\ker \dd_{x_n}(G_{|_{W_{\beta}}})$ converges to a space $\mathcal{T}$ in an appropriate Grassmann bundle, then $\ker \dd_{x}(G_{|_{W_{\alpha}}}) \subset \mathcal{T}$.
		\end{itemize}
		
		\vspace{0.3cm}
		
		We say that $W_\beta$ is \textit{Thom regular over $W_\alpha$ with respect to $G$}, or equivalently, \textit{the pair $(W_\beta, W_\alpha)$ satisfies the condition of \textbf{Thom $(\aa_G)$-regularity}}, when condition ($\ast$) is satisfied for any point $x\in W_\alpha$.
	\end{definition}

	\vspace{0.3cm}

	\begin{definition}\label{d:thom-general}
		Let $G: (\mathbb{R}^m, 0) \to (\mathbb{R}^p, 0)$ be a germ of an analytic map. We say that there exists a \textbf{Thom $(\aa_G)$-regular stratification in $V_G$}, or that \textbf{$G$ has Thom regularity}\footnote{Or simply, $G$ is Thom regular}, if there exists a regular stratification $(\mathbb{W}, \mathbb{S})$ of a germ $G$, such that $0$ is a point in the stratum $\mathbb{S}$, $V_G$ is the union of strata in $\mathbb{W}$, and the Thom $(\aa_G)$-regularity condition is satisfied at any stratum of $V_G$.
	\end{definition}
	
	\vspace{0.3cm}
	
	\begin{remark}
		Let $G: (\mathbb{R}^m,0)\to (\mathbb{R}^p,0)$ be a germ of an analytic map. In general, there can be two situations regarding the singular set:
		\begin{enumerate}
			\item[(1)] $\Disc G = \{0\}$ as a germ of a set;
			\item[(2)] $\textrm{dim}\, \Disc G > 0$ as a germ of a set.
		\end{enumerate}
		
		\vspace{0.3cm}
		
		\textbf{Suppose case (1) occurs}: If there exists a stratification $\mathbb{W}:=\{W_\alpha\}_{\alpha \in A}$ of $V_G$, such that in a neighborhood $B_\epsilon^m$ of the origin, $\mathbb{W}':=\{B_\epsilon^m \setminus V_G\}\cup \{W_\alpha \cap B_\epsilon^m \}_{\alpha \in A}$ is a Whitney stratification and the pair $(B_\epsilon^m \setminus V_G, W_\alpha)$ satisfies the Thom $(\aa_G)$-regularity condition for any stratum $W_\alpha$, we say that \textit{\textbf{$G$ has Thom regularity in the classical sense}}\footnote{Or simply, $G$ is Thom regular in the classical sense.} or that there exists a \textit{\textbf{Thom $(\aa_G)$-regular stratification in $V_G$ in the classical sense}}, whenever it is necessary to emphasize the fact that $G$ has an isolated critical value.

		\vspace{0.3cm}

		\textbf{Suppose case (2) occurs}: If there exists a stratification $\mathbb{W}:=\{W_\alpha\}_{\alpha \in A}$ of $V_G$, such that in a neighborhood $B_\epsilon^m$ of the origin, $\mathbb{W}':=\{B_\epsilon^m \setminus G^{-1}(\Disc G) \}\cup \{W_\alpha \cap B_\epsilon^m \}_{\alpha \in A}$ is a Whitney stratification and the pair $(B_\epsilon^m \setminus G^{-1}(\Disc G), W_\alpha)$ satisfies the Thom $(\aa_G)$-regularity condition for any stratum $W_\alpha$, we say that \textit{\textbf{$G$ has $\partial$-Thom regularity}}\footnote{Note that $\partial$-Thom regularity reduces to Thom regularity in the classical sense when $\Disc G = \{0\}$.} or that there exists a \textit{\textbf{$\partial$-Thom $(\aa_G)$-regular stratification in $V_G$}}, whenever it is necessary to emphasize the fact that $\textrm{dim}\,\Disc G > 0$.
		
	\end{remark}

	\vspace{0.3cm}

	This weaker condition of $\partial$-Thom regularity was introduced in \cite[Def. 2.1]{Ti1} and \cite[A.1.1.1]{Ti3} for the case of neighborhoods at infinity and adapted in \cite{Ri} for the local case of germs of analytic maps. It was further presented in the article \cite{ART}, where the authors demonstrated that such a condition guarantees the existence of stratified fibration structures.
	The main difference between $\partial$-Thom regularity and Thom regularity in $V_G$ is that in the former, we \textbf{do not} require the Thom $(\aa_G)$-regularity condition to hold for all pairs of strata $(W_\beta, W_\alpha)$ where $W_\beta$ is outside of $V_G$ and $W_\alpha$ is contained in $V_G$. It is only necessary for the condition to be satisfied for pairs where $W_\beta = B_\epsilon^m \setminus G^{-1}(\Disc G)$.
	
	\vspace{0.3cm}
	
	Obviously, the existence of a Thom $(\aa_G)$-regular stratification in $V_G$ implies the existence of a $\partial$-Thom $(\aa_G)$-regular stratification in $V_G$. In this work, we will focus only on the case (1), where $\Disc G = \{0\}$.

	\vspace{0.3cm}

	\begin{example}\cite[Example 5.3]{ACT}
		Consider the map $G(x, y,z) = (y^4-z^2x^2-x^4,xy)$ in three real variables. We have $V_G = \{x=y=0\}=\Sing G$.
		Therefore, $\Disc G = \{0\}$.   The authors at \cite{ACT} have proved that $G$ has Thom regularity in the classical sense.
	\end{example}

	The first result in this section shows that Thom regularity at $V_G$ implies the existence of a neighborhood of $S^{m-1}_{\e} \cap V_G$ in $S^{m-1}_{\e}$ where the fibers of $G$ intersect $S^{m-1}_{\e}$ transversely. In other words, Thom regularity implies $\rho$-regularity and by Theorem \ref{ttf} in the existence of Milnor tube fibration.
	
	\vspace{0.2cm}
	
	\begin{proposition}\label{pre-p1}
		Let $G:(\mathbb{R}^m,0)\to (\mathbb{R}^p,0)$ be a germ of analytic map with isolated critical value. If $ G $ is Thom regular at $V_G$, then for any $\e>0$ sufficiently small, there exists a neighborhood  $N_\e$ of $S^{m-1}_{\e} \cap  V_G$ in $S^{m-1}_{\e}$ such that for any $x \in N_\e \setminus V_G$, we have $S^{m-1}_{\e} \pitchfork_x G^{-1}(G(x))$. 
	\end{proposition}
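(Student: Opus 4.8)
The plan is to prove the statement by contradiction, turning a failure of transversality on $S^{m-1}_\e$ into a violation of Thom $(\aa_G)$-regularity at a nonzero point of $V_G$. First I would record the two kernels involved. Since $\Disc G=\{0\}$, the hypothesis provides a Thom $(\aa_G)$-regular stratification of $V_G$ in the classical sense, whose top stratum is $W_\beta=B^m_\e\setminus V_G$, where $G$ is a submersion; hence $\ker\dd_x(G_{|_{W_\beta}})=\ker\dd_x G=T_x G^{-1}(G(x))$, the fiber tangent space, of constant dimension $m-p$ on $W_\beta$ (as $\Sing G\subseteq V_G$). On a stratum $W_\alpha\subset V_G$ one has $G_{|_{W_\alpha}}\equiv 0$, so $\ker\dd_x(G_{|_{W_\alpha}})=T_x W_\alpha$.

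Before arguing, I would fix $\e_0>0$, via Milnor's lemma (curve selection applied to each semianalytic stratum), so small that for every $0<\e\le\e_0$ the sphere $S^{m-1}_\e$ is transverse to each stratum $W_\alpha$ of $V_G$; equivalently, $\rho|_{W_\alpha}$ has no critical point in $\{0<\rho\le\e_0\}$. The key translation is that $x$ is a critical point of $\rho|_{W_\alpha}$ precisely when the position vector $x$ is orthogonal to $T_x W_\alpha$.

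Now fix $0<\e\le\e_0$ and suppose no neighborhood $N_\e$ as claimed exists. Taking a shrinking basis of neighborhoods of $S^{m-1}_\e\cap V_G$ and using compactness of that set, I obtain a sequence $x_n\in S^{m-1}_\e\setminus V_G$ with $S^{m-1}_\e\not\pitchfork_{x_n}G^{-1}(G(x_n))$ and $x_n\to x^*\in S^{m-1}_\e\cap V_G$, so $x^*\neq 0$. As $T_{x_n}S^{m-1}_\e=x_n^\perp$ is a hyperplane, failure of transversality forces the fiber tangent space $F_n:=T_{x_n}G^{-1}(G(x_n))$ to satisfy $F_n\subseteq x_n^\perp$. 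After passing to a subsequence I may assume $F_n\to\cT$ in the Grassmann bundle of $(m-p)$-planes. Choosing orthonormal frames of $F_n$ converging to a frame of $\cT$ and passing to the limit in $\langle\,\cdot\,,x_n\rangle=0$ gives $\cT\subseteq (x^*)^\perp$. On the other hand, applying the Thom $(\aa_G)$-condition at $x^*\in W_\alpha$ to the sequence $x_n\in W_\beta$ yields $T_{x^*}W_\alpha=\ker\dd_{x^*}(G_{|_{W_\alpha}})\subseteq\cT$. Combining these, $T_{x^*}W_\alpha\subseteq (x^*)^\perp$, so $x^*$ is a critical point of $\rho|_{W_\alpha}$ at level $\rho(x^*)=\e\in(0,\e_0]$, contradicting the choice of $\e_0$. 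Hence $N_\e$ exists for all sufficiently small $\e$, which is the desired $\rho$-regularity.

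I expect the delicate points to be twofold. The first is the limit computation in the Grassmann bundle: the implication $F_n\subseteq x_n^\perp \Rightarrow \cT\subseteq (x^*)^\perp$ must be justified through converging frames together with $x_n\to x^*$, and one must confirm that $\dim F_n=m-p$ is constant so that the limit lives in the correct Grassmannian. The second, and the genuine geometric input, is the uniform transversality of small spheres to the strata, i.e.\ the existence of $\e_0$; this rests on the finiteness of critical values of $\rho|_{W_\alpha}$ accumulating at $0$, a consequence of the curve-selection lemma on the analytic stratum $W_\alpha$. Everything else is bookkeeping with the kernels of the restricted differentials.
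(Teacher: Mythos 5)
Your proof is correct and follows essentially the same route as the paper's: a contradiction argument that extracts a sequence $x_n \to x^* \in S^{m-1}_\e \cap V_G$ of non-transversality points, takes the limit $\cT$ of the fiber tangent planes in a Grassmannian, applies the Thom $(\aa_G)$-condition to get $T_{x^*}W_\alpha \subseteq \cT$, and contradicts the transversality of small spheres to the strata of $V_G$. The only difference is that you spell out two steps the paper leaves implicit — the passage from $F_n \subseteq x_n^{\perp}$ to $\cT \subseteq (x^*)^{\perp}$ via converging frames, and the curve-selection argument guaranteeing the radius $\e_0$ of sphere--stratum transversality — so your write-up is a more complete version of the same argument.
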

	
	\begin{proof}
		Since $G$ is Thom regular at $V_G$, there exists a Whitney stratification $\mathbb{S}=\{S_\alpha\}$ of $V_G$ such that for any stratum $S_\alpha$, the pair $(B^{m}_{\e}\setminus V_G, S_\alpha)$ satisfies the Thom $\aa_G$-condition. Moreover, we can consider a stratification such that for any $\e>0$ sufficiently small, the spheres $S^{m-1}_{\e}$ intersect any stratum $S_\alpha$ transversely.
		
		Let $p\in  S^{m-1}_{\e} \cap S_\alpha$. If the statement is not true, then it is possible to obtain a sequence of points $\{x_n\}$ with  $x_n \in S^{m-1}_{\e} \setminus V_G$ such that  $x_n \to p$ and $S^{m-1}_{\e}  \not\pitchfork_{x_n}  G^{-1}(G(x_n))$. Now, considering the limit $\textrm{T}:=\lim \,\textrm{T}_{x_n}G^{-1}(G(x_n))$ in some appropriate Grassmann bundle, by Thom regularity, we have $\textrm{T}_pS_{\alpha} \subset \textrm{T}$, which leads to a contradiction since $\textrm{T}_pS_{\alpha} + \textrm{T}_pS^{m-1}_{\e} = \bR^{m}$. 
		
	\end{proof}
	
	\vspace{0.2cm}
	
	\begin{remark}\label{obs1}
		Since $S^{m-1}_{\e} \cap  V_G \neq \emptyset$, Thom regularity at $V_G$ implies the existence of $\eta >0$ with $0<\eta \ll \e$ such that  $S^{m-1}_{\e} \cap G^{-1}(B^{p}_{\eta}\setminus \{0\}) \subset N_\e$. In other words, the map:
		\begin{equation}\label{rest1}
			G_{|}: S_{\e}^{m-1}\cap G^{-1}(B^{p}_{\eta} \setminus \{0\}) \to B^{p}_{\eta} \setminus \{0\}
		\end{equation}
		is a smooth submersion. This information will be useful to guarantee that Thom regularity at $V_G$ implies the condition of regularity known as the \textit{Milnor Condition (b)}.
	\end{remark}

	\vspace{0.3cm}

	\begin{remark}
		An important fact is that the Thom $(\aa_G)$-regularity condition needs to be verified only at points $p\in \Sing G \cap V_G$. In fact, if $p\in V_G\setminus \Sing G$, then $p$ is a regular point of $G$. Consequently, there exists a neighborhood of $p$ in $V_G$ where all points are regular, and thus it is possible to construct a stratification around $p$ satisfying the Thom $(\aa_G)$-regularity condition.
	\end{remark}
	
	\vspace{0.3cm}

	Initially, the existence of a Thom regular stratification was characterized in terms of polar invariants in \cite{HMS}. In \cite{Ga}, Gaffney used integral closures of modules to study Thom $(\aa_f)$-regularity. In \cite{Hi}, H. Hironaka established Thom regularity for holomorphic functions as follows:
	
	\vspace{0.2cm}
	
	\begin{theorem}\cite{Hi}\label{theorem-hi}
		Let $f: E \to C$ be a germ of an analytic map of a complex algebraic set $E$ over a non-singular complex curve $C$. Then $f$ is Thom regular.
	\end{theorem}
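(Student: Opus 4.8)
The plan is to localize the statement, recast Thom $(\aa_G)$-regularity as an incidence condition on the relative conormal space, and then obtain the stratification by a Noetherian induction whose base case rests on a complex-analytic genericity property of the Thom condition. First I would localize: choosing a local coordinate on $C$, it suffices to treat a germ $f:(E,0)\to(\bC,0)$ of a holomorphic function on a complex analytic set germ $E$, and to fix a local embedding $E\subset\bC^N$. I would then form the \emph{relative conormal space}
\[
C_f(E):=\overline{\{(x,[H])\ :\ x\in E_{\reg}\setminus\Sing f,\ H=\ker \dd_x f\}}\ \subset\ E\times\bP\big((\bC^N)^\vee\big),
\]
the closure being taken in the projectivized dual. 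Since over each regular fibre $f^{-1}(t)$ the conormal line is uniquely determined, $C_f(E)$ is a complex analytic set with $\dim C_f(E)=\dim E$, and the projection $\pi:C_f(E)\to E$ is an isomorphism over $E_{\reg}\setminus\Sing f$. In this language, for the big (submersive) stratum $W_\beta$ and a stratum $W_\alpha\subset V_f:=f^{-1}(0)$, the identity $f|_{W_\alpha}\equiv 0$ gives $\ker \dd_x(f|_{W_\alpha})=T_xW_\alpha$, so Thom $(\aa_G)$-regularity of $(W_\beta,W_\alpha)$ becomes the incidence condition: for every $(x,[H])\in C_f(E)$ with $x\in W_\alpha$ one has $T_xW_\alpha\subset H$. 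The whole problem is therefore to build a Whitney stratification $\bW$ of $E$, with $V_f$ a union of strata, for which this containment holds everywhere.

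Second comes the analytic heart of the argument: for a fixed stratum $W_\alpha$, the incidence condition holds on a dense Zariski-open subset of $W_\alpha$ and fails only on a proper closed complex analytic subset. This is exactly where holomorphy is indispensable. By the complex curve selection lemma, any failure is witnessed by an analytic arc $x(s)\to x$ with $x(s)\in E_{\reg}\setminus\Sing f$ along which $\ker \dd_{x(s)}f$ converges to a hyperplane missing $T_xW_\alpha$; the Puiseux/valuative control of the conormal direction $[\dd f]$ along such arcs shows the set of such limit points $x$ is analytic, and the dimension bound $\dim C_f(E)=\dim E$ forces it to be proper in $W_\alpha$. Over $\bR$ this control is precisely what can break down, which is why Thom regularity may fail in the real-analytic setting.

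Third is the induction. Since $E$ is complex analytic, it, together with $C_f(E)$, $\Sing f$ and $V_f$, admits a Whitney stratification; refining it I may assume $V_f$ and $\Sing f$ are unions of strata and that $\pi|_{C_f(E)}$ has constant fibre type over each stratum. On the open stratum where $f$ is submersive the condition is automatic; on each stratum $W_\alpha\subset V_f$ the genericity step confines the failure locus to a proper analytic subset, and I then refine so that this subset is a union of strata of strictly smaller dimension and repeat. Because $\dim E<\infty$ and complex analytic germs satisfy the Noetherian condition, the process terminates in a Thom $(\aa_G)$-regular stratification.

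The main obstacle is the genericity/analyticity step, namely that the failure locus of the incidence condition is a \emph{proper analytic} subset of each stratum; making this rigorous is Hironaka's essential contribution. I expect the cleanest route to be one of two: either invoke resolution of singularities to reduce to the model in which $E$ is smooth and $V_f$ is a normal crossing divisor, where the incidence condition reduces to an explicit monomial computation, and then transport the condition back along the resolution; or use the L\^e--Teissier characterization of $(\aa_f)$ through the integral closure of the relative Jacobian ideal and the equimultiplicity of the relative polar varieties, which produces analytic failure loci automatically. Either way, controlling the limiting tangent hyperplanes under these reductions is the technically demanding point, and it is here that the full strength of complex-analytic geometry is used.
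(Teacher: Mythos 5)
Your proposal is a plan whose central step is left unproved, and the justification you sketch for that step is not sound. The entire content of Hironaka's theorem is your ``genericity'' claim: that for each stratum $W_\alpha \subset V_f$, the locus of points $x$ at which some limit $\mathcal{T}$ of tangent spaces of fibres fails to contain $T_x W_\alpha$ is a \emph{proper} analytic subset of $W_\alpha$. You assert this follows from curve selection, Puiseux control, and the bound $\dim C_f(E)=\dim E$, but the dimension count does not force properness: the part of $C_f(E)$ lying over $V_f$ has dimension $\dim E - 1$, so over a stratum $W_\alpha$ of dimension $a$ it can have fibres of dimension up to $\dim E - 1 - a$ over \emph{every} point of $W_\alpha$, and nothing in the count prevents all of these limits from violating the incidence condition everywhere (this is exactly what happens in the real Example 3.10 of the paper, where all the relevant dimension bounds also hold). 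Properness genuinely requires complex-analytic input --- the L\^e--Teissier Lagrangian property of $C_f(E)$ restricted over $V_f$, Hironaka's flattening, or a \L ojasiewicz-type inequality --- and you acknowledge this (``making this rigorous is Hironaka's essential contribution'') and then defer to resolution of singularities or integral closure without carrying either out. There is also a technical slip in your setup: when $E$ is singular, $\ker \dd_x f$ is a hyperplane of $T_xE$, not of $\bC^N$, so it is not a point of $\bP((\bC^N)^\vee)$; you must either work in the Grassmannian of $(\dim E-1)$-planes (then $\dim C_f(E)=\dim E$, but it is not a conormal variety) or take the true relative conormal (hyperplanes of $\bC^N$ containing the fibre tangent), which has dimension $N$, not $\dim E$. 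Since the dimension bound is the crux of your genericity argument, this ambiguity is not cosmetic.

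For comparison, the paper does not reprove Hironaka's theorem in this generality; what it proves (Theorem 3.9, the case of a holomorphic function on an open set of $\bC^n$) goes by an entirely different, self-contained route due to Hamm and L\^e. One adjoins a variable $t$, forms the hypersurface $A=\{f(x)-t^N=0\}$, takes any Whitney stratification of $A$ compatible with $V_f\times\{0\}$, and shows that the induced stratification of $V_f$ is Thom regular once $N>\frac{1}{1-\theta}$, where $\theta$ is a \L ojasiewicz exponent with $\|\nabla f\|\ge c|f|^\theta$: along a sequence $(u_n,t_n)\to (x,0)$ one estimates $N|t_n|^{N-1}/\|\nabla f(u_n)\| \le N|f(u_n)|^{\frac{N-1}{N}-\theta}\to 0$, so the normal direction of the fibres of $g=f-t^N$ converges into $\bC^n\times\{0\}$, and Whitney condition (a) for $A$ then yields the Thom condition for $f$. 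In other words, the paper replaces your missing genericity lemma by two standard ingredients --- existence of Whitney stratifications and the \L ojasiewicz gradient inequality --- which is why that argument closes and yours, as written, does not.
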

	
	\vspace{0.2cm}
	
	In the case of hypersurfaces in $\bC^{n+1}$, L\^{e} D\~ung Tr\'ang and H. Hamm proved\footnote{See \cite[Theorem 1.2.1]{HL}} that this result still holds and is implied by the classical \L ojasiewicz inequality, which we will recall below:
	
	\vspace{0.2cm}
	
	\begin{definition}\cite[\L ojasiewicz Inequality]{LZ}\label{pre-lojine}
		Let $f:U\to \bC$ be a holomorphic function, where $U\subset \bC^n$ is an open set such that $0\in U$ and $f(0)=0$. We say that $f$ satisfies the \textit{\L ojasiewicz inequality} at the origin if there exists a neighborhood $W$ of $0$ in $U$ such that for any $z\in W$, we have \[c\,\|f(z)\|^{\theta} \le \|\,\nabla f (z)\| \] 
		for some $ \theta \in (0,1)  $ and some $ c >0 $. 
	\end{definition}
	
	\vspace{0.2cm}
	
	\begin{theorem}\label{loj}\cite{BM,Lo1}
		Every holomorphic function satisfies the \L ojasiewicz inequality.
	\end{theorem}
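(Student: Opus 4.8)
The plan is to reduce the holomorphic gradient inequality of Definition~\ref{pre-lojine} to the classical \emph{gradient} form of the \L ojasiewicz inequality for real-analytic functions, which is exactly the content recorded in \cite{BM,Lo1}. A preliminary remark isolates the real content: if $\nabla f(0)\neq 0$, then $\|\nabla f\|$ is bounded below by a positive constant on a small neighborhood $W$ of $0$, while $|f(z)|\to 0$ on $W$, so $c\,|f(z)|^{\theta}\le\|\nabla f(z)\|$ holds on $W$ with, say, $\theta=\tfrac12$ and $c$ small. Hence the whole difficulty sits at a critical point, and we may assume $\nabla f(0)=0$.

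Next I would pass to real coordinates. Write $f=u+iv$ with $u=\Re f$ and $v=\im f$; these are real-analytic on $U\subset\bC^{n}\cong\bR^{2n}$, with coordinates $z_j=x_j+iy_j$ and $u(0)=v(0)=0$. The key algebraic step is the Cauchy--Riemann identity: since $\partial_{\bar z_j}f=0$, one computes $\partial_{z_j}f=\partial_{x_j}u-i\,\partial_{y_j}u$, whence
\[
\|\nabla f(z)\|^{2}=\sum_{j=1}^{n}|\partial_{z_j}f(z)|^{2}=\sum_{j=1}^{n}\bigl((\partial_{x_j}u)^{2}+(\partial_{y_j}u)^{2}\bigr)=\|\nabla u(z)\|^{2},
\]
where on the right $\nabla u$ denotes the real gradient in $\bR^{2n}$; the same computation yields $\|\nabla f\|=\|\nabla v\|$. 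Thus the complex gradient norm of $f$ coincides with the real gradient norm of each of its real-analytic components $u,v$.

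Then I would invoke the real-analytic \L ojasiewicz gradient inequality of \cite{Lo1} (see also \cite{BM}) for $u$ at the critical value $u(0)=0$: there exist a neighborhood $W$ of $0$, an exponent $\theta\in(0,1)$ and $C>0$ with $\|\nabla u(z)\|\ge C\,|u(z)|^{\theta}$ on $W$, and likewise $\|\nabla v(z)\|\ge C\,|v(z)|^{\theta}$ after shrinking $W$. Combining this with the identity of the previous step and the elementary estimate $|f|\le\sqrt{2}\,\max(|u|,|v|)$ gives
\[
|f(z)|^{\theta}\le 2^{\theta/2}\max\bigl(|u(z)|^{\theta},|v(z)|^{\theta}\bigr)\le \frac{2^{\theta/2}}{C}\,\|\nabla f(z)\|,\qquad z\in W,
\]
which is precisely the \L ojasiewicz inequality with $c=C\,2^{-\theta/2}$ and $\theta\in(0,1)$.

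The main obstacle is obtaining the correct range of exponents. The \emph{separation} form of the \L ojasiewicz inequality, applied to $\|\nabla f\|^{2}$ and $|f|^{2}$ using $\{\nabla f=0\}\subset\{f=0\}$ near $0$, would only produce $|f|^{N}\le C\,\|\nabla f\|$ for some a priori large exponent $N$, which need not lie in $(0,1)$ and cannot be lowered into that interval (on $\{|f|\le 1\}$ a smaller exponent enlarges the left-hand side). It is therefore essential to use the gradient version of the inequality, whose \L ojasiewicz exponent is guaranteed to lie in $(0,1)$ (indeed in $[\tfrac12,1)$). The reduction above is what transfers this sharp real-analytic exponent to the holomorphic setting; the only care needed is in verifying the Cauchy--Riemann identity and in passing from the components $u,v$ back to $|f|$ without degrading the exponent.
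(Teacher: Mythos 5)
Your proof is correct. Note that the paper itself gives no argument for this statement: it is quoted from the literature with the citations \cite{BM,Lo1}, so there is no internal proof to compare against, and what you have supplied is essentially the standard reduction that those references support. Your key identity is right: for holomorphic $f=u+iv$, the Cauchy--Riemann equations give $\partial_{z_j}f=\partial_{x_j}u-i\,\partial_{y_j}u$, hence $\|\nabla f\|=\|\nabla u\|=\|\nabla v\|$ with the real gradients taken in $\bR^{2n}$, and the real-analytic gradient inequality applied to $u$ and $v$ (both real-analytic and vanishing at the origin) transfers to $f$ with no loss in the exponent via $|f|\le\sqrt{2}\max(|u|,|v|)$. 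The one detail you elide, namely that $u$ and $v$ can be given a common exponent $\theta\in(0,1)$ and constant $C$, is harmless: take the larger of the two exponents and the smaller of the two constants, shrinking $W$ so that $|u|,|v|\le 1$. Your closing remark is also exactly the right point to stress: a separation-type \L ojasiewicz inequality would only yield $|f|^{N}\le C\,\|\nabla f\|$ for some a priori large $N$, which does not meet the requirement $\theta\in(0,1)$ of Definition \ref{pre-lojine}; and that requirement is not cosmetic, since the paper's application in Theorem \ref{pre-r1} needs $\theta<1$ precisely in order to choose $N>\frac{1}{1-\theta}$. So invoking the gradient form of the inequality, whose exponent can be taken in $(0,1)$, is the essential move, and your reduction carries that sharp exponent over to the holomorphic setting correctly.
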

	
	\vspace{0.2cm}
	
	\begin{theorem}\cite{HL,O}\label{pre-r1}
		Every holomorphic function is Thom regular.
	\end{theorem}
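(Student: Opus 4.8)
The plan is to deduce Thom regularity from the \L ojasiewicz inequality furnished by Theorem \ref{loj}, following the method of Hamm and L\^e \cite{HL} (see also Oka \cite{O}). Note first a short alternative: viewing $E=\bC^n$ as a (smooth) complex algebraic set and $C=\bC$ as a nonsingular complex curve, Hironaka's Theorem \ref{theorem-hi} applies verbatim to the holomorphic germ $f$ and yields Thom regularity at once. I will instead develop the self-contained route that matches the \L ojasiewicz statement just proved. Since $f:(\bC^n,0)\to(\bC,0)$ has one-dimensional codomain, its zero set $V_f=f^{-1}(0)$ is a hypersurface, so this is exactly the situation treated in \cite{HL}. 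I would fix a complex-analytic Whitney stratification $\mathbb{S}=\{S_\alpha\}$ of $V_f$, refine it so that each stratum lies either in $\Sing f$ or in the regular locus $V_f\setminus \Sing f$, and adjoin the top stratum $B^{2n}_\e\setminus V_f$; because $\Disc f=\{0\}$, the map $f$ is a submersion off $V_f$, so on this stratum $\ker \dd_x f$ is precisely the tangent space $T_x f^{-1}(f(x))$. By the Remark following Proposition \ref{pre-p1} it suffices to check the Thom $(\aa_f)$-condition for the pairs $(B^{2n}_\e\setminus V_f, S_\alpha)$ at points $x_0\in \Sing f\cap V_f$, since it is automatic along the regular strata.

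The next step is the analytic translation of the $(\aa_f)$-condition. For $f$ holomorphic the fiber tangent space $\ker \dd_x f$ is the Hermitian-orthogonal complement of the conjugate gradient direction $\nu(x):=\overline{\nabla f(x)}/\|\nabla f(x)\|$. Hence a sequence $x_n\to x_0$ in $B^{2n}_\e\setminus V_f$ with $\ker \dd_{x_n} f\to \cT$ gives a limit hyperplane $\cT=\nu_0^{\perp}$ with $\nu_0=\lim \nu(x_n)$, and the required inclusion $T_{x_0}S_\alpha\subset \cT$ becomes $\langle w,\nu_0\rangle=0$ for every $w\in T_{x_0}S_\alpha$. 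Thus it is enough to prove that the tangential component of the unit gradient tends to zero along any approach to $x_0$, i.e. that $|\dd_{x_n}f(w)|/\|\nabla f(x_n)\|\to 0$ for all $w\in T_{x_0}S_\alpha$.

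To establish this I would argue by contradiction with the curve selection lemma, as in Proposition \ref{pre-p1}: a failure produces a real-analytic curve $\gamma(s)\to x_0$ with $\gamma(s)\notin V_f$, a fixed $w\in T_{x_0}S_\alpha$, and a uniform positive lower bound on $|\dd_{\gamma(s)}f(w)|/\|\nabla f(\gamma(s))\|$. For the numerator, choosing a nearest point $\pi(s)\in \Sing f$ of $\gamma(s)$ and using that $\nabla f$ vanishes identically on $\Sing f$, the mean-value estimate for the (locally Lipschitz) map $\nabla f$ gives $|\dd_{\gamma(s)}f(w)|\le C\,\mathrm{dist}(\gamma(s),\Sing f)$. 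For the denominator, Theorem \ref{loj} supplies $\|\nabla f(\gamma(s))\|\ge c\,|f(\gamma(s))|^{\theta}$ with $\theta\in(0,1)$. The main obstacle is exactly to reconcile these two bounds into a genuine decay estimate: the crude comparison through a distance \L ojasiewicz separation $\mathrm{dist}(\gamma(s),V_f)\lesssim |f(\gamma(s))|^{1/\mu}$ only closes when $\theta<1/\mu$, which the raw exponents do not guarantee, and moreover the numerator involves $\mathrm{dist}(\gamma(s),\Sing f)$ rather than $\mathrm{dist}(\gamma(s),V_f)$. Overcoming this demands the sharp interplay between the gradient and distance \L ojasiewicz exponents along the selected curve, tracking the simultaneous orders of vanishing of $f$, of the distance to the singular set, and of the tangential derivative; this delicate asymptotic analysis is the technical core carried out in \cite{HL,O}. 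Once the ratio is shown to vanish one gets $\langle w,\nu_0\rangle=0$, contradicting the choice of $\gamma$, so the Thom $(\aa_f)$-condition holds and $f$ is Thom regular.
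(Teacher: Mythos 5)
Your proposal does not actually prove the theorem: after the (correct) reduction of the Thom $(\aa_f)$-condition to the statement that $|\dd_{x_n}f(w)|/\|\nabla f(x_n)\|\to 0$ for every $w\in T_{x_0}S_\alpha$, you stop and defer ``the delicate asymptotic analysis'' to \cite{HL,O}. That analysis \emph{is} the theorem; everything before it is setup. Worse, the partial estimates you offer cannot be assembled into a proof even in principle: the numerator bound $|\dd_{\gamma(s)}f(w)|\le C\,\mathrm{dist}(\gamma(s),\Sing f)$ is obtained by bounding $|\dd_{\gamma(s)}f(w)|$ by $\|\nabla f(\gamma(s))\|\,\|w\|$, so it is completely independent of the direction $w$ --- it holds equally for vectors transverse to the stratum, for which the conclusion is false. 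Any successful argument must exploit that $w$ is tangent to a stratum \emph{inside} $V_f$ (e.g.\ that $f$ vanishes identically along $S_\alpha$, so the derivative of $f$ in directions close to $T_{x_0}S_\alpha$ is controlled by second-order terms); no estimate that is uniform in all directions can do this. The opening ``short alternative'' --- quoting Hironaka's Theorem \ref{theorem-hi} with $E=\C^n$, $C=\C$ --- is likewise a citation, not a proof, and defeats the purpose of giving an argument from Theorem \ref{loj}.

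It is worth seeing how the paper's own proof sidesteps exactly the difficulty you ran into. Instead of analyzing the tangential component of the unit gradient of $f$, it introduces the auxiliary function $g(x,t)=f(x)-t^N$ on $\C^n\times\C$ and the hypersurface $A=\{f(x)=t^N\}$, endowed with \emph{any} Whitney stratification in which $V_f\times\{0\}$ is a union of strata. If Thom regularity failed along a sequence $x_n=(u_n,t_n)\to(x,0)$, then Whitney condition (a) for $A$ plus a dimension count forces the limit $\tau$ of $T_{x_n}A$ to equal $\C^n\times\{0\}$. The \L ojasiewicz inequality is then used only to control one scalar quantity: since $f(u_n)=t_n^N$, one has
\begin{equation*}
\frac{N|t_n|^{N-1}}{\|\nabla f(u_n)\|}=\frac{N|f(u_n)|^{\frac{N-1}{N}}}{\|\nabla f(u_n)\|}\le \frac{N}{c}\,|f(u_n)|^{\frac{N-1}{N}-\theta}\longrightarrow 0
\end{equation*}
once $N>\frac{1}{1-\theta}$, so the unit normal $\nabla g(x_n)/\|\nabla g(x_n)\|$ converges to a nonzero vector of the form $(z_0,0)$, which cannot be normal to $\tau=\C^n\times\{0\}$ --- a contradiction. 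The direction-dependent part of the problem (the one your distance estimate cannot see) is absorbed into the Whitney (a) condition of the auxiliary hypersurface, which holds automatically, and \L ojasiewicz only has to kill a single ratio. If you want to complete your own route instead, you must reproduce Hamm--L\^e's genuine argument (projecting the curve onto the stratum, using $f|_{S_\alpha}\equiv 0$ to gain an order of vanishing in tangential directions), which is precisely the step you left to the references.
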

	\begin{proof}
		Let $f: U \subset \C^n \to \C$ be a holomorphic function. For a given natural number $N$, define the analytic function $g:U \times \C \to \C$ by setting $g(x,t) = f(x) - t^N$, and consider the analytic set $$A=\left\lbrace (x,t) \in U \times \C; f(x)-t^N=0 \right\rbrace. $$ In what follows, we will find the singular set of the analytic variety $A$. Observe that $\nabla g(x,t) = (\nabla f(x), -Nt^{N-1})$. Thus, $\nabla g(x,t)=(0,0)$ if and only if $\nabla f(x)=0$ and $t=0$. Therefore, the set $\Sing g = \Sing f \times \{0\}$, which clearly satisfies the inclusion $\Sing g \subset V_f \times \{0\}$. Now, consider a Whitney stratification $\bW$ of $A$ such that $A \setminus (V_f \times \{0\})$ and $V_f \times \{0\}$ are unions of strata. This stratification induces a Whitney stratification $\mathcal{S}_N$ of $V_f$.
		
		\vspace{0.2cm}
		
		\noindent
		We claim that for sufficiently large $N$, $\mathcal{S}_N$ satisfies the Thom regularity condition. Indeed, consider $x_n:=(u_n,t_n)$ as a sequence in $A \setminus (V_f \times \{0\}) \cap B$, where $B$ is some compact ball centered at the origin in $U \times \C$, with $x_n \to (x,0) \in (V_f\times \{0\})\cap B$. Assume that the sequence of tangent planes $\textrm{T}\,\left( f^{-1}(f(u_n)\times t_n)\right)$ converges to some limit $\textrm{T}$. Since $x_n \in A \setminus \Sing g$, we know that the tangent plane $\textrm{T}_{x_n}\,A$ is well-defined, and for each fixed $t_n$, the analytic set $f^{-1}(f(u_n)) \times \{t_n\}:=\left\lbrace (x_n, t_n); f(x)=t^N_n\right\rbrace$ is an analytic submanifold of $A$. Hence, we have that $\textrm{T}\,\left( f^{-1}(f(u_n)\times t_n)\right)$ is a complex vector subspace of codimension $1$ in $\textrm{T}_{x_n}\,A$. Assume that 
		\begin{equation}\label{e9}
			\textrm{T}_xV_x \not \subset \textrm{T},
		\end{equation}
		where $V_x$ is some stratum of $V_f$ containing the point $x$. After possibly passing to a subsequence, we can assume that $\textrm{T}_{x_n}\, A$ converges to $\tau$. As $\bW$ satisfies the Whitney condition (a), we have $\textrm{T}_xV_x \subset \tau$. From the inclusion $\textrm{T}\,\left( f^{-1}(f(u_n)\times t_n)\right) \subset \textrm{T}_{x_n}\,A$, it follows that $\textrm{T} \subset \tau$. Note that the complex dimension of $\tau$ is $n$ and that of $\textrm{T}$ is $n-1$, therefore, from \eqref{e9} and \eqref{e91}, we get
		\begin{equation}\label{e91}
			\tau = \textrm{T} \oplus \textrm{T}_xV_x.
		\end{equation}
		On the other hand, $\textrm{T} \subset\C^n \times \{0\}$ and $\textrm{T}_xV_x \subset \C^n \times \{0\}$. Thus, from the equality in \eqref{e91}, we have
		\begin{equation}\label{e92}
			\tau = \C^n \times \{0\}.
		\end{equation}
		Now, to analyze the limit of the sequence of tangent planes $\textrm{T}_{x_n}\, A$, let us consider the vector $$\frac{\nabla g(x_n)}{\|\nabla g(x_n)\|}=\frac{1}{\|\nabla f(u_n)\|\sqrt{1+|Nt^{N-1}_n|^2/\|\nabla f(u_n)\|^2}}\left( \nabla f(u_n),-Nt^{N-1}_n\right)  .$$ Since $x_n=(u_n,t_n) \in A$, we have $f(u_n)=t_n^N$. Thus,  $|t_n|^{N-1} =  |f(u_n)|^{ \frac{N-1}{N}} $. Therefore, we have the equality $$ |Nt^{N-1}_n|/\|\nabla f(u_n)\| =N|t_n|^{N-1}/\|\nabla f(u_n)\| = N|f(u_n)|^{\frac{N-1}{N}}/\|\nabla f(u_n)\|. $$
		
		\vspace{0.2cm}

		\noindent
		Now, it follows from the \L ojasiewicz inequality that $ \|\nabla f(u_n)\| \ge c|f(u_n)|^\theta $, thus, $$ |Nt^{N-1}_n|/\|\nabla f(u_n)\| \le |f(u_n)|^{\frac{N-1}{N}-\theta}.$$ Taking $N$ such that $N > \frac{1}{1-\theta}$, we obtain that $\frac{N-1}{N} - \theta > 0$. As $x_n=(u_n,t_n) \to (x,0)\in V_f \times \{0\}$, we have $f(u_n) \to f(x)=0$. Therefore, $|Nt^{N-1}_n|/\|\nabla f(u_n)\|  \to 0$ as $n \to +\infty$, and $\nabla g(x_n) / \|\nabla g(x_n)\| \to (z_0,0)$, where $z_0 \in \C^n \setminus \{0\}$, which contradicts the fact that $(z_0,0)$ is a normal vector to $\tau$. Thus, we have $\textrm{T}_xV_x  \subset \textrm{T}$, i.e., $\bW$ satisfies the Thom regularity condition, and in particular, $\mathcal{S}_N$ is a Thom $ (a_f) $-regular stratification of $V_f$.
	\end{proof}
	
	\vspace{0.3cm}
	
	Unfortunately, for a real analytic map germ $G: (\mathbb{R}^m,0) \to (\mathbb{R}^p,0)$, Thom regularity in $V_G$ is not generally satisfied, this is demonstrated by the following example.

	\vspace{0.3cm}
	
	\begin{example}\cite[Example 1.4.9]{Ha}\label{eNT1}
		Consider the map $G(x, y,z) = (x, y(x^2 + y^2) + xz^2)$ in three real variables. We have $V_G = \{x=y=0\}=\Sing G$.
			Therefore, $\Disc G = \{0\}$. We claim that $G$ is not Thom regular in $V_G$ in the classical sense. 
			
			Indeed, let $\mathbb{W}=\{W_i\}$ be a Whitney stratification of $V_G$, and consider the point $p=(0,0,z)\in W_i$ for some stratum of positive dimension $W_i$. In this case, we have $\textrm{T}_{p}W_i = \textrm{span}{(0,0,1)}$. Note that the sequence of points $p_n = \left(\frac{1}{n},0,z\right)\in B^3_{\epsilon} \setminus V_G$ converges to $p$, and for each natural number $n$, $$\ker \dd_{p_n}(G_{|_{B^3_\epsilon \setminus V_G}}) =\textrm{T}_{p_n}G^{-1}(G(p_n)) = \textrm{span} \left\{v_n\right\},$$ where $$v_n = \left(0, \frac{-2z}{\sqrt{4z^2+ \frac{1}{n^2}}}, \frac{1}{\sqrt{4z^2n^{2}+1}}\right).$$ 
			
			Since $\lim_{n}v_n = (0,\pm1,0)$, where the plus or minus sign depends on the sign of $z$, we have $\lim_{n}(\ker \dd_{p_n}(G_{|{B^3\epsilon \setminus V_G}})) =\textrm{span}{(0,1,0)}=:\cT$.
			That is, $\textrm{T}_{p}W_i \not \subset \cT$, meaning that $\left(B^3_{\epsilon} \setminus V_G, W_i \right)$ does \textbf{not} satisfy the Thom $(\aa_G)$-regularity condition for any stratum $W_i$ of $V_G$. Therefore, $G$ is not Thom regular in $V_G$ in the classical sense.
		\end{example}
		
		\vspace{0.3cm}
		
		Deciding whether a real analytic map germ is Thom regular can be a very challenging task, except possibly in the case of germs with an isolated singularity at the origin, where Thom regularity naturally holds. This is stated in the following theorem.
		
		\vspace{0.3cm}

		\begin{theorem}\label{ttr}
			If $f: \R^{n} \to \R^{m}$ is a real analytic function with an isolated singularity at the origin, then $f$ is Thom regular in $V_f$.
		\end{theorem}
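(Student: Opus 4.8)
The plan is to exhibit, on a sufficiently small ball $B^n_\epsilon$, a Whitney stratification compatible with $V_f$ for which the Thom $(\aa_f)$-condition holds along every stratum of $V_f$; by Definition \ref{d:thom-general} this is precisely Thom regularity in $V_f$, and it falls in the classical case since an isolated singularity forces $\Sing f = \{0\}$ and hence $\Disc f = \{0\}$. The structural consequence I would extract first is that every $x \in V_f \setminus \{0\}$ is a regular point, so $\dd_x f$ has maximal rank and, by the implicit function theorem, $V_f \setminus \{0\}$ is a smooth submanifold (of dimension $n-m$) on which $f \equiv 0$. This singles out the natural candidate
\[
\mathbb{W}' = \{\, B^n_\epsilon \setminus V_f,\ V_f \setminus \{0\},\ \{0\} \,\},
\]
whose strata carry constant rank: $f$ is a submersion on the open stratum and is identically zero on the other two.

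Next I would verify the Thom $(\aa_f)$-condition for the two pairs $(B^n_\epsilon \setminus V_f, \{0\})$ and $(B^n_\epsilon \setminus V_f, V_f \setminus \{0\})$. For the first pair it is automatic: the stratum $\{0\}$ is zero-dimensional, so $\ker \dd_0(f|_{\{0\}}) = \{0\}$ is contained in every limit space $\mathcal{T}$. For the second pair, fix $p \in V_f \setminus \{0\}$ and a sequence $x_n \to p$ in $B^n_\epsilon \setminus V_f$ with $\ker \dd_{x_n} f = \textrm{T}_{x_n} f^{-1}(f(x_n))$ converging to $\mathcal{T}$; since $f$ is a submersion near the regular point $p$, the fibre tangent spaces depend continuously on the base point, whence $\mathcal{T} = \textrm{T}_p V_f = \ker \dd_p(f|_{V_f \setminus \{0\}})$ and the required inclusion holds, in fact with equality. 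This is the elementary submersion mechanism already present in Proposition \ref{pre-p1}.

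The main obstacle is to confirm that $\mathbb{W}'$ is a genuine Whitney stratification. The pairs containing the open stratum $B^n_\epsilon \setminus V_f$ are harmless, since its tangent space is all of $\mathbb{R}^n$ and therefore contains every limit of secants and of tangent planes, so Whitney (a) and (b) hold trivially there; thus only the Whitney regularity of the pair $(V_f \setminus \{0\}, \{0\})$ requires argument. Here I would appeal to the Whitney--\L ojasiewicz theory of (sub)analytic sets: the locus of the singular stratum along which Whitney condition (b) can fail is a proper analytic subset of that stratum. As the singular locus of $V_f$ is the single point $\{0\}$, this bad locus is a proper subset of a zero-dimensional set, hence empty, and Whitney (b) holds at the origin. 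Combining the triviality at $\{0\}$, the submersion argument along $V_f \setminus \{0\}$, and this Whitney property yields the desired Thom $(\aa_f)$-regular stratification of $V_f$. I expect the only delicate point to be this last appeal in the real analytic setting, where one may alternatively verify Whitney (b) at $0$ directly using the curve selection lemma together with a \L ojasiewicz inequality for $f$.
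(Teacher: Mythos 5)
Your proposal is correct and takes essentially the same route as the paper's proof: the Thom condition is vacuous over the zero-dimensional stratum $\{0\}$, and at points of $V_f \setminus \{0\}$, where $f$ is a submersion, the implicit-function-theorem normal form shows the fibre tangent spaces vary continuously, so their limit equals $T_p V_f$ and the required inclusion holds (with equality). The only place you go beyond the paper is in explicitly checking that $(V_f \setminus \{0\}, \{0\})$ is a Whitney pair via the fault-set theorem for (sub)analytic sets; the paper's proof simply says ``choose any stratification'' and leaves that verification implicit.
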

		\begin{proof}
			Choose any stratification of $V(f)$, with $S_{0}={0}$ being one of its strata. If a sequence of points $x_{k}$ converges to $0$ with $T_{x_{k}}f^{-1}(f(x_{k}))$ converging to $T$, then we have $T_{0}S_{0} \in T$, so $T_{0}S_{0}$ will be a point. Assume, therefore, that $p$ is a point in $V(f)$ with $p \neq 0$.
			By the implicit function theorem, we can find local coordinates $\phi$ and $\psi$ such that $\tilde{f}=\psi \circ f \circ \phi^{-1}$ is the projection given by $\tilde{f}(x_{1}, \ldots, x_{n})=(x_{1}, \ldots, x_{n-m}, 0, \ldots, 0)$. For any point $q$ in a neighborhood of $p$, let $\tilde{q}=\phi(q)$. It follows from the definition of $\psi$ that $\tilde{p}=0$. Let $p_{k}$ be a sequence of points converging to $p$. Write $\tilde{p}_{k}$ as $\tilde{p}_{k}=(\tilde{p}_{k,1}, \ldots, \tilde{p}_{k,n})$. Then, for $k$ sufficiently large, we have $T_{\tilde{p}_{k}}\tilde{f}^{-1}(\tilde{f}(\tilde{p}_{k}))$ given by $(\tilde{p}_{k,1}, \ldots, \tilde{p}_{k,n-m}) \times \R^{m} \subset \R^{n}$.
			As $p_{k}$ tends to $p$, $T_{\tilde{p}_{k}}\tilde{f}^{-1}(\tilde{f}(\tilde{p}_{k}))$ must tend to
			$(0,\ldots,0,0) \times \R^{m}=T_{\tilde{p}}\phi(V(f))$. But since $\phi$ is a diffeomorphism, we then have $T_{p}(V(F))=T$, where $T$ is the limit of $T_{p_{n}}f^{-1}(f(p_{n}))$.
		\end{proof}
		
		\vspace{0.2cm}
		
		The next two examples have an isolated singularity at the origin; therefore, from Theorem \ref{ttr}, they are Thom regular at $V_G$.
		
		\vspace{0.2cm}
		
		\begin{example}\label{ex2}
			The map $ G(x_1,x_2,x_3,x_4)=(x_1,3x_1^2x_2+x_2^3+x_3^2+x_4^2 ) $ has an isolated singularity at the origin.
		\end{example}
		
		\begin{example}
			The map $G:=(G_1,G_2): (\bR^3,0)\to (\bR^2,0)$ given by  $G_1(x_1,x_2,x_3)=x_2$ and $G_2(x_1,x_2,x_3)=x_2^2 + x_1(x_1^2 + x_2^2 + x_3^2)$ has an isolated singularity at the origin; therefore, it is Thom regular at $V_G$. 
		\end{example}
		
		\vspace{0.3cm}
		
		In recent years, researchers have been exploring conditions that lead to Thom regularity for specific classes of real analytic map germs with isolated critical values at the origin. These conditions are of significant interest due to their direct connection with the existence of locally trivial smooth fibrations. Several noteworthy results have been obtained in different contexts.
		
		\vspace{0.3cm}
		
		Massey, in \cite{Ma}, formulated a sufficient condition of the \L ojasiewicz type. Oka, in \cite{Oka3} considered mixed functions $G: \mathbb{C}^n \to \mathbb{C}$ that are strongly Newton non-degenerate. In the case of certain functions of the form $f\bar{g}$, where $f$ and $g$ are germs of holomorphic functions, in 2 variables was addressed in \cite{PS3}, in 3 variables in \cite{FM}, and in the general case of $n$ variables in \cite{PT}.
		
		\vspace{0.3cm}
		
		Furthermore, for functions of the $MSL$ type $G: \mathbb{C}^n \to \mathbb{C}$, efficient and more easily verifiable conditions were discovered by the authors in \cite{R}. These conditions have been instrumental in generating new examples of Thom regular real analytic maps in the classical sense. The combined efforts in exploring these conditions have significantly enriched our understanding of Thom regularity and its implications in different settings.



		\vspace{0.3cm}
		
		\section{New examples of maps with Thom regularity}
		
		In what follows, we will consider the following definition:
		
		\begin{definition}
			We say that a germ map between euclidean spaces $G: (\mathbb{R}^m, 0) \to (\mathbb{R}^p, 0)$ is a \textit{horizontally weakly conformal maps} if $  \left\langle \nabla G_i, \nabla G_j \right\rangle =0  $ for $ i\neq j $, $ i,j \in{1,\dots, p} $, and $\|\nabla G_i \|^2 = \|\nabla G_j \|^2$ for all $ i,j \in{1,\dots, p} $.
		\end{definition}
		
		\vspace{0.2cm}
		
		\begin{example}
			Consider the map $ G:=(G_1,G_2):(\mathbb{R}^4,0) \to (\mathbb{R}^2,0) $ given by 	
			$$ \left\lbrace \begin{array}{cl}
				G_1(x,y,z,w)&= -w^2x^3+3w^2xy^2+6wx^2yz-2wy^3z+x^3z^2-3xy^2z^2    \medskip\\
				G_2(x,y,z,w)&= 3w^2x^2y-w^2y^3+2wx^3z-6wxy^2z-3x^2yz^2+y^3z^2   , \medskip\\

			\end{array}\right. $$ 	
			
			We have
			
			$$ \begin{array}{cl}
				\nabla G_1  &=  (-3w^2x^2+3w^2y^2+12wxyz+3x^2z^2-3y^2z^2)\vec{e}_1\\
				&+ (6w^2xy+6wx^2z-6wy^2z-6xyz^2) \vec{e}_2\\
				&+  (6wx^2y-2wy^3+2x^3z-6xy^2z)\vec{e}_3\\
				&+  (-2wx^3+6wxy^2+6x^2yz-2y^3z) \vec{e}_4\\
			\end{array} $$
			
			and
			
			$$ \begin{array}{cl}
				\nabla G_2   &= (6w^2xy+6wx^2z-6wy^2z-6xyz^2) \vec{e}_1\\
				&+ (3w^2x^2-3w^2y^2-12wxyz-3x^2z^2+3y^2z^2) \vec{e}_2\\
				&+  (2wx^3-6wxy^2-6x^2yz+2y^3z)\vec{e}_3\\
				&+  (6wx^2y-2wy^3+2x^3z-6xy^2z) \vec{e}_4\\
			\end{array} $$
			
			\vspace{0.2cm}

			\noindent
			Portanto, 	$ \left\langle \nabla G_1, \nabla G_2 \right\rangle =0 $, 	e $ \|\nabla G_1 \|^2 = (x^2+y^2)^2(w^2+z^2)(9w^2+4x^2+4y^2+9z^2) = \|\nabla G_2 \|^2  $. Hence, $ G $ is a horizontally weakly conformal map.
		\end{example}
		
		\vspace{0.2cm}

		\begin{remark}
			If $G:(\mathbb{R}^{m},0)\to (\mathbb{R}^{p},0)$ is a germ of a horizontally weakly conformal map, then $\Sing G = \left\lbrace x\in \mathbb{R}^m \mid JG(x) = 0 \right\rbrace$. In fact, since  $$ \Sing G = \left\lbrace x\in \bR^m\,|\,\textrm{det}\left(  JG(x) \dot JG(x)^{T} \right) = 0 \right\rbrace$$ and $G$ is a horizontally weakly conformal map, we have that $\textrm{det}\left( JG(x) \cdot JG(x)^{T} \right) = |\nabla G_j(x)|^{2p}$ for an arbitrarily chosen $j \in {1,\ldots,p}$. Therefore, if $x\in \Sing G$, then $\nabla G_j(x) = 0$ for all $j=1,\ldots,p$, which means $JG(x) = 0$.
		\end{remark}

		\vspace{0.2cm}
		
		The next result guarantees that horizontally weakly conformal maps satisfy Thom regularity. A proof can be found in \cite{Ma} or \cite{ADRS}, for example.
		
		\vspace{0.2cm}
		
		\begin{theorem}\label{tma}\cite{ADRS,Ma}
			Let $G: (\mathbb{R}^m, 0) \to (\mathbb{R}^p, 0)$ be an analytic horizontally weakly conformal map germ. Then $ \Disc(G) = \{0\} $ and $ G $ is Thom regular at $ V_G $.
		\end{theorem}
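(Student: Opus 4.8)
My plan is to prove the two assertions separately, with the \L ojasiewicz gradient inequality as the main engine in both, and to exploit conformality to reduce the vector-valued second assertion to the scalar situation already handled in Theorem \ref{pre-r1}.

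\textbf{The isolated critical value.} First I would show $\Disc G=\{0\}$, i.e. $\Sing G\subseteq V_G$. By the Remark preceding the statement, horizontal weak conformality gives $\Sing G=\{x: JG(x)=0\}=\bigcap_{i=1}^{p}\{\nabla G_i=0\}$. Consider the real analytic function $\Phi:=\|G\|^2=\sum_{i=1}^p G_i^2$, which vanishes exactly on $V_G$ and satisfies $\nabla\Phi=2\sum_i G_i\nabla G_i$; on $\Sing G$ every $\nabla G_i$ vanishes, so $\nabla\Phi=0$ there. The \L ojasiewicz gradient inequality for $\Phi$ at the origin furnishes $c>0$ and $\theta\in(0,1)$ with $\|\nabla\Phi(x)\|\ge c\,\Phi(x)^{\theta}$ near $0$; evaluating at a point of $\Sing G$ forces $\Phi(x)=0$, that is $x\in V_G$. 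Hence $\Sing G\subseteq V_G$ and $\Disc G=\{0\}$.

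\textbf{A \L ojasiewicz inequality for $G$.} For Thom regularity I would first extract a scalar estimate. Expanding $\|\nabla\Phi\|^2=4\sum_{i,j}G_iG_j\langle\nabla G_i,\nabla G_j\rangle$ and using $\langle\nabla G_i,\nabla G_j\rangle=\delta_{ij}\|\nabla G_1\|^2$ gives the identity $\|\nabla\Phi\|^2=4\|\nabla G_1\|^2\,\|G\|^2$. Combined with the gradient inequality above this yields $\|\nabla G_i(x)\|\ge C\,\|G(x)\|^{\theta'}$ for all $i$ and $\theta'=2\theta-1\in[0,1)$ near $0$ (boundedness of $\nabla G_i$ off $V_G$ forces $\theta\ge 1/2$, so $\theta'\ge 0$), which is exactly the \L ojasiewicz-type hypothesis of Massey \cite{Ma}. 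I would also record two consequences of conformality that linearise the situation: for constant $A\in O(p)$ the map $AG$ is again horizontally weakly conformal with the same zero set and the same fibres; and for every unit $u\in\bR^p$ the component $h_u:=\langle u,G\rangle$ has $\|\nabla h_u\|^2=\|\nabla G_1\|^2$ independent of $u$, while $\{\nabla h_u\}_{|u|=1}$ sweeps out the whole normal space $\span\{\nabla G_i\}$.

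\textbf{Verification of the Thom condition.} Fix a Whitney stratification $\{S_\alpha\}$ of $V_G$; as noted in the text it suffices to check the Thom $(\aa_G)$-condition at points of $\Sing G\cap V_G$, since at regular points of $V_G$ the continuous unit gradients $\nabla G_i/\|\nabla G_i\|$ tend to vectors normal to $V_G\supseteq S_\alpha$. So let $x\in S_\alpha\subseteq\Sing G$, let $x_n\to x$ with $x_n\notin V_G$, and, after passing to a subsequence, let the orthonormal frame $e_i(n):=\nabla G_i(x_n)/\|\nabla G_i(x_n)\|$ converge to an orthonormal frame $\{e_i\}$, so that $T_{x_n}G^{-1}(G(x_n))=\span\{\nabla G_i(x_n)\}^{\perp}\to\span\{e_i\}^{\perp}=:\cT$. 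I must show $T_xS_\alpha\subseteq\cT$, equivalently $e_i\perp T_xS_\alpha$ for all $i$; by the $h_u$-reduction this is precisely the statement that $\lim_n\nabla h_u(x_n)/\|\nabla h_u(x_n)\|=\sum_i u_i e_i$ is orthogonal to $T_xS_\alpha$ for every unit $u$, i.e. a scalar Thom condition for each $h_u$. Assuming the contrary, I would apply the curve selection lemma to the subanalytic set of points off $V_G$ where the normalised gradient makes an angle bounded away from zero with $(T S_\alpha)^{\perp}$, obtaining a real analytic arc $\gamma(s)\to x$ off $V_G$ along which these directions converge, and then compare the Puiseux orders of $\|G(\gamma(s))\|$, of $\|\nabla G_i(\gamma(s))\|$, and of the displacement of $\gamma$ from $S_\alpha$; the inequality $\|\nabla G_i\|\ge C\|G\|^{\theta'}$ with $\theta'<1$ prevents the gradient from decaying fast enough to acquire a tangential limit, exactly as in the concluding estimate of Theorem \ref{pre-r1}, yielding the contradiction.

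The main obstacle is this final step: controlling the \emph{directions} of the limiting gradients, not merely their rate of decay, and in particular relating the direction in which $\gamma$ approaches $S_\alpha$ to $T_xS_\alpha$. This is the heart of Thom regularity and is where the \L ojasiewicz inequality must be used quantitatively; conformality assists by keeping $\{e_i(n)\}$ orthonormal and by making the components interchangeable through $O(p)$, so that the scalar estimate of Theorem \ref{pre-r1} applies direction by direction. Should a fully self-contained arc argument prove too delicate, I would instead observe that the displayed inequality is exactly Massey's strong \L ojasiewicz condition and invoke his theorem, as in \cite{Ma,ADRS}.
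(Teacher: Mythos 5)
The first thing to say is that the paper itself contains no self-contained proof of Theorem \ref{tma}: it identifies horizontally weakly conformal maps with Massey's simple $\ell$-maps, recalls that Massey proved every real analytic simple $\ell$-map satisfies the strong \L ojasiewicz inequality, and then cites \cite{Ma,ADRS} for the conclusion. Your declared fallback is therefore exactly the paper's argument, and in that form your proposal is correct and takes the same route. What you add on top of the paper is genuine and correct: the identity $\|\nabla\Phi\|^2=4\|\nabla G_1\|^2\|G\|^2$ for $\Phi=\|G\|^2$ follows at once from pairwise orthogonality and equal lengths, and combined with the scalar \L ojasiewicz gradient inequality it gives both $\Sing G\subseteq V_G$ (hence $\Disc G=\{0\}$, which the paper never argues explicitly) and the inequality $\|\nabla G_i\|\ge C\|G\|^{\theta'}$; for conformal maps this really is Massey's hypothesis, since every unit combination $\sum_i u_i\nabla G_i$ has norm $\|\nabla G_1\|$. (One small repair: take $\theta>1/2$ strictly, which is always possible because the gradient inequality survives raising the exponent where $\Phi\le 1$; this avoids the degenerate exponent $\theta'=0$.)

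The gap is in your primary route, and it is real, so the fallback is not an optional convenience. Two concrete problems. First, the reduction to the scalar functions $h_u=\langle u,G\rangle$ is not a reduction to any quotable scalar theorem: you need the Thom condition for $h_u$ relative to the fixed stratification of $V_G$, not of the larger set $V_{h_u}$, and worse, your sequence $x_n\notin V_G$ may satisfy $h_u(x_n)=0$ for the very direction $u$ you are testing (e.g.\ a sequence inside $\{G_1=0\}\setminus V_G$), in which case any scalar argument modeled on sequences off the zero set of $h_u$ simply does not apply to it. Second, the claim that the contradiction then follows ``exactly as in the concluding estimate of Theorem \ref{pre-r1}'' does not hold up: that proof is built on the auxiliary complex variety $\{f(x)=t^N\}$, on Whitney (a) for a stratification of that variety, and on complex dimension counts that force $\tau=\mathbb{C}^n\times\{0\}$ before the \L ojasiewicz estimate kills the vertical component of the gradient; transplanting this to a real $h_u$, with a stratification adapted to $V_G$ and with the bookkeeping over all directions $u$ simultaneously, is precisely the hard directional-control step, and it is exactly the content of Massey's theorem rather than a routine adaptation. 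So judged as a self-contained argument the proposal is incomplete at the decisive step; judged as a reduction to Massey's strong \L ojasiewicz theorem it is correct and coincides with what the paper does, with the bonus that your derivation of the hypothesis is cleaner than a bare citation.
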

		
		\vspace{0.2cm}
		
		Let us point out that D. Massey in \cite{Ma}, have called of {\it simple \l - map}  a map  $ G:=(G_1(x), G_2(x), \ldots, G_p(x))$ from an open subset $ U $ of $ \bR^m $ to $ \bR^p $ such that the gradients $ \nabla G_1(x),\nabla G_2(x), \ldots,\nabla G_p(x) $ are always pairwise orthogonal and have the same length, see \cite[Definition 3.5]{Ma}. Therefore, for a map between Euclidean spaces, horizontally weakly conformal maps and simple \l -maps correspond to the same kind of maps. 
		
		\vspace{0.2cm}
		
		Moreover, D. Massey also showed that if $ G  $ is a simple \l -map which is real analytic, it satisfies the {\it strong \L ojasiewicz inequality} at each point $ x\in U $, see \cite[Corollary 3.2 and Remark 3.4]{Ma}. 
		
		\vspace{0.2cm}
		
		\begin{example}\label{e21}
			Consider the map $ G:=(G_1,G_2):(\mathbb{R}^8,0) \to (\mathbb{R}^2,0) $ given by  
			
			$$ \left\lbrace \begin{array}{ccl}
				G_1(x,y,z,w,a,b,c,d)&= -w^2x^2+w^2y^2+4wxyz+x^2z^2-y^2z^2+ac+bd&   \medskip\\
				G_2(x,y,z,w,a,b,c,d)&=-2w^2xy-2wx^2z+2wy^2z+2xyz^2-ad+bc,&  \medskip\\

			\end{array}\right. $$ 
			
			We have
			
			$$ \begin{array}{cl}
				\nabla G_1  &= (-2w^2x+4wyz+2xz^2) \vec{e}_1\\
				&+ (2w^2y+4wxz-2yz^2) \vec{e}_2\\
				&+  (4wxy+2x^2z-2y^2z)\vec{e}_3\\
				&+ ( -2wx^2+2wy^2+4xyz) \vec{e}_4\\
				&+  c \vec{e}_5\\
				&+   d \vec{e}_6 \\
				&+   a\vec{e}_7\\
				&+   b \vec{e}_8
			\end{array} $$
			
			and 
			
			$$ \begin{array}{cl}
				\nabla G_2   &= (-2w^2y-4wxz+2yz^2) \vec{e}_1\\
				&+  (-2w^2x+4wyz+2xz^2)\vec{e}_2\\
				&+  (-2wx^2+2wy^2+4xyz)\vec{e}_3\\
				&+   (-4wxy-2x^2z+2y^2z)\vec{e}_4\\
				&+  -d \vec{e}_5\\
				&+   c \vec{e}_6 \\
				&+   b\vec{e}_7\\
				&+    -a\vec{e}_8
			\end{array} $$

			Therefore, 	
			
			\[ \left\langle \nabla G_1, \nabla G_2 \right\rangle =0 \]
			
			$$ \begin{array}{lll}
				\|\nabla G_1 \|^2 &=4w^4x^2+4w^4y^2+4w^2x^4+8w^2x^2y^2+8w^2x^2z^2+4w^2y^4+8w^2y^2z^2\medskip\\	
				& = +4x^4z^2+8x^2y^2z^2 +4 x^2 z^4+4 y^4 z^2+4 y^2 z^4+a^2+b^2+c^2+d^2 \medskip\\
				& = \|\nabla G_2 \|^2 \medskip\\
			\end{array}$$
			
			Hence, Theorem \ref{tma} implies that $ G $ is Thom regular in $ V_G $, with $ \Disc G = \{0\} $.
			
		\end{example}	
		
		\vspace{0.2cm}
		
		The next result uses the complex structure to build a horizontally weakly conformal between Euclidean spaces.
		
		\vspace{0.2cm}
		
		\begin{proposition}\cite{ADRS}\label{t1} Let $ (G_1,G_2,G_3,G_4):(\bR^{2n},0)\to (\bR^4,0 ) $ be a  polynomial map germ such that $ (G_1,G_2) $ and $ (G_3,G_4) $ are horizontally weakly conformal maps. If $ \left\langle \nabla G_1, \nabla G_3 \right\rangle -\left\langle \nabla G_2, \nabla G_4 \right\rangle =0  $ and $ \left\langle \nabla G_1, \nabla G_4 \right\rangle +\left\langle \nabla G_2, \nabla G_3 \right\rangle =0  $, then $ \left( G_1G_3 - G_2G_4, G_1G_4+G_2G_3 \right):(\bR^{2n},0)\to (\bR^2,0 )  $ is a horizontally weakly conformal map.
		\end{proposition}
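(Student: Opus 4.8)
The plan is to encode all four hypotheses and the desired conclusion into a single complex-bilinear identity, so that the statement becomes an immediate consequence of the symmetry and bilinearity of a quadratic form. First I would complexify the domain and work in $\bC^{2n}$ equipped with the symmetric (\emph{not} Hermitian) bilinear form $\langle\cdot,\cdot\rangle$ obtained by $\bC$-linear extension of the Euclidean inner product on $\bR^{2n}$. To the two given pairs I associate the complex gradient vectors
\[
\mathbf{v}:=\nabla G_1+i\,\nabla G_2,\qquad \mathbf{w}:=\nabla G_3+i\,\nabla G_4\in\bC^{2n},
\]
and I introduce the complex-valued functions $\zeta:=G_1+iG_2$ and $\omega:=G_3+iG_4$, so that $(G_1G_3-G_2G_4)+i(G_1G_4+G_2G_3)=\zeta\omega$.

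The first step is a dictionary lemma. Expanding $\langle\mathbf{v},\mathbf{v}\rangle=\|\nabla G_1\|^2-\|\nabla G_2\|^2+2i\langle\nabla G_1,\nabla G_2\rangle$ shows that $(G_1,G_2)$ being horizontally weakly conformal is exactly $\langle\mathbf{v},\mathbf{v}\rangle=0$, and likewise $(G_3,G_4)$ horizontally weakly conformal is $\langle\mathbf{w},\mathbf{w}\rangle=0$. Expanding $\langle\mathbf{v},\mathbf{w}\rangle=\bigl(\langle\nabla G_1,\nabla G_3\rangle-\langle\nabla G_2,\nabla G_4\rangle\bigr)+i\bigl(\langle\nabla G_1,\nabla G_4\rangle+\langle\nabla G_2,\nabla G_3\rangle\bigr)$ shows that the two remaining hypotheses are precisely $\langle\mathbf{v},\mathbf{w}\rangle=0$. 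Thus the hypotheses say exactly that $\mathbf{v}$ and $\mathbf{w}$ are mutually orthogonal isotropic vectors for the complex bilinear form.

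The key computation, which I expect to be the only place needing care, is to identify the complex gradient of the product. Applying the Leibniz rule to $F_1=G_1G_3-G_2G_4$ and $F_2=G_1G_4+G_2G_3$ and regrouping the eight resulting terms according to the basis gradients $\nabla G_1,\dots,\nabla G_4$, one finds that the coefficients assemble into the complex scalars $\omega$ and $\zeta$:
\[
\nabla F_1+i\,\nabla F_2=\omega\,(\nabla G_1+i\,\nabla G_2)+\zeta\,(\nabla G_3+i\,\nabla G_4)=\omega\,\mathbf{v}+\zeta\,\mathbf{w}.
\]
The verification hinges on the observations that the $\nabla G_2$-coefficient $-G_4+iG_3$ equals $i\omega$ and the $\nabla G_4$-coefficient $-G_2+iG_1$ equals $i\zeta$; this is exactly where the complex multiplication structure does the work.

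Finally I would conclude by a one-line quadratic expansion. Using symmetry and bilinearity of $\langle\cdot,\cdot\rangle$ together with the dictionary lemma,
\[
\langle\nabla F_1+i\,\nabla F_2,\ \nabla F_1+i\,\nabla F_2\rangle=\omega^2\langle\mathbf{v},\mathbf{v}\rangle+2\omega\zeta\langle\mathbf{v},\mathbf{w}\rangle+\zeta^2\langle\mathbf{w},\mathbf{w}\rangle=0.
\]
Since the left-hand side equals $\|\nabla F_1\|^2-\|\nabla F_2\|^2+2i\langle\nabla F_1,\nabla F_2\rangle$, its vanishing forces simultaneously $\|\nabla F_1\|^2=\|\nabla F_2\|^2$ and $\langle\nabla F_1,\nabla F_2\rangle=0$, which is exactly the assertion that $(F_1,F_2)$ is horizontally weakly conformal. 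An alternative, purely real route would be to expand the three quantities $\|\nabla F_1\|^2$, $\|\nabla F_2\|^2$, $\langle\nabla F_1,\nabla F_2\rangle$ directly and cancel using the four scalar hypotheses; this works but is much longer and less transparent, so I would present the complexified argument and leave the elementary bookkeeping to the reader.
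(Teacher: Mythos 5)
Your proof is correct, and every step checks out: the dictionary identifying horizontal weak conformality of $(G_1,G_2)$ with isotropy of $\mathbf{v}=\nabla G_1+i\nabla G_2$ under the $\bC$-bilinear (non-Hermitian) extension of the Euclidean form, the identification of the two cross hypotheses with $\langle\mathbf{v},\mathbf{w}\rangle=0$, the Leibniz computation $\nabla F_1+i\nabla F_2=\omega\mathbf{v}+\zeta\mathbf{w}$ (the coefficient bookkeeping $-G_4+iG_3=i\omega$ and $-G_2+iG_1=i\zeta$ is right), and the final quadratic expansion using symmetry and bilinearity are all accurate. One point of comparison: the paper itself gives no proof of Proposition \ref{t1} --- it is quoted from \cite{ADRS} --- so there is no argument in the text to measure yours against line by line. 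What the paper does supply is the adjacent mixed-function machinery: identifying the target $\bR^2$ with $\bC$, your $\zeta=G_1+iG_2$ and $\omega=G_3+iG_4$ are mixed functions on $\bC^n\cong\bR^{2n}$, the map in the conclusion is exactly their product $\zeta\omega$, and your isotropy condition $\langle\nabla F_1+i\nabla F_2,\nabla F_1+i\nabla F_2\rangle=0$ is the real-gradient counterpart of the criterion of Proposition \ref{c1} (there phrased as $\langle\overline{\dd f},\bar{\dd}f\rangle_{\bC}=0$); in that language your argument is precisely the statement that this condition is stable under products, which is the mechanism behind the stability assertions of Proposition \ref{alg1}. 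So your route is the expected one in spirit, but it has the merit of being entirely self-contained --- only real gradients and a complex-bilinear form, no Wirtinger calculus --- and it isolates cleanly why the extra hypotheses are needed: the two isotropic gradient vectors $\mathbf{v},\mathbf{w}$ must in addition be mutually orthogonal for the product to stay horizontally weakly conformal.
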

		
			
			
			
		
		\vspace{0.2cm}
		
		\begin{example}
			Let $ G(x,y,z,w,a,b)=(xz-yw,xw+yz,ax+by,-ay+bx) $. Define $ G_1:= xz-yw$, $ G_2:=xw+yz $, $G_3:=ax+by $ and $ G_4:=-ay+bx $. One can show that $ (G_1,G_2) $ and $ (G_3,G_4) $ are a horizontally weakly conformal map.  However,  $ \left( G_1G_3 - G_2G_4, G_1G_4+G_2G_3 \right):(\bR^{2n},0)\to (\bR^2,0 )  $ is not a a horizontally weakly conformal map. Note that the additional hypothesis of Proposition \ref{t1} about the gradient of the component functions fails.
		\end{example}
		
		\vspace{0.2cm}
		
		\begin{example}
			Let $ G(x,y,z,w,a,b)=(xz-yw,xw+yz,ax+by,ay-bx) $. Define $ G_1:= xz-yw$, $ G_2:=xw+yz $, $G_3:=ax+by $ and $ G_4:=ay-bx $. One can show that $ (G_1,G_2) $ and $ (G_3,G_4) $ are a horizontally weakly conformal map.  Since $ \left\langle \nabla G_1, \nabla G_3 \right\rangle -\left\langle \nabla G_2, \nabla G_4 \right\rangle =0  $ and $ \left\langle \nabla G_1, \nabla G_4 \right\rangle +\left\langle \nabla G_2, \nabla G_3 \right\rangle =0  $, it follows from Proposition \ref{t1} that $ (H_1,H_2):(\bR^{6},0)\to (\bR^2,0 ) $ given by $H_1 (x,y,z,w,a,b) = (xz-yw)(ax+by)-(xw+yz)(ay-bx)$ and $H_2(x,y,z,w,a,b) = (xz-yw)(ay-bx)+(xw+yz)(ax+by)  $ is a horizontally weakly conformal map.
		\end{example}
		
		\vspace{0.2cm}

		
		\begin{definition}
			We say that $f:(\bR^{m}\times \bR^{n},0)\to (\bR^{p},0)$ and $g:(\bR^{m}\times \bR^{n},0)\to (\bR^{k},0)$ are \textbf{separable variable} germs if   \[\dfrac{\partial f_t(x,y)}{\partial y_j} = 0 \textrm{   e   } \dfrac{\partial g_s(x,y)}{\partial x_i} = 0,\] for $ t= 1,\ldots, p $, $ j=1,\ldots, n $, $ s= 1,\ldots, k $, and $ i= 1,\ldots, m $. In this case, we can simply write $ f(x,y):=f(x) $ and $ g(x,y):=g(y) $, with $f:(\bR^{m},0)\to (\bR^{p},0)$ and $g:(\bR^{n},0)\to (\bR^{p},0)$, respectively.
		\end{definition}
		
		\vspace{0.2cm}
		
		Consider the map
		\[G:=f+g:(\bR^{m}\times \bR^{n},0)\to (\bR^{p},0) \]
		where $f:(\bR^{m},0)\to (\bR^{p},0)$ and $g:(\bR^{n},0)\to (\bR^{p},0)$ are germs of maps with separable variables. We have that $V_G= {(x,y)\in\bR^{m}\times \bR^{n},|, f(x)+g(y)=0 }$. Therefore, $ V_f \times V_g \subset V_G $. If $ \Disc f =\{0\} $ and $ \Disc g = \{0\} $, then $ \Sing f \times \Sing g \subset V_G $. Moreover, by definition, $ \nabla G_j(x,y) = (\nabla f_j(x),\nabla g_j(x)) $ for every $ j=1,\ldots,p $. Consequently, $\Sing G \subset \Sing f \times \Sing g$ and $ \Disc G = \{0\} $. In particular, $ V_G \cap \Sing G = \Sing G $.
		
		\vspace{0.2cm}
		
		\begin{proposition}
			If $f:(\bR^{m},0)\to (\bR^{p},0)$ and $g:(\bR^{n},0)\to (\bR^{p},0)$ are horizontally weakly conformal  maps, then the sum  $G:=f+g:(\bR^{m}\times \bR^{n},0)\to (\bR^{p},0)$ is a horizontally weakly conformal map. In particular, $G$ is Thom regular in the classical sense.
		\end{proposition}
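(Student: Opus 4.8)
The plan is to verify directly that the sum $G = f+g$ inherits both defining properties of a horizontally weakly conformal map from $f$ and $g$, and then to invoke Theorem \ref{tma}. The only structural input needed is the separable-variable description of the component gradients, which I would establish first. Because $f$ and $g$ have separable variables, each component $G_j(x,y) = f_j(x) + g_j(y)$ has gradient, with respect to the coordinates on $\bR^m \times \bR^n$, given by
\[
\nabla G_j(x,y) = \left(\nabla f_j(x),\, \nabla g_j(y)\right), \qquad j = 1, \ldots, p,
\]
since the $y$-derivatives of $f_j$ and the $x$-derivatives of $g_j$ vanish identically.

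Next, for the orthogonality condition I would compute, for $i \neq j$,
\[
\left\langle \nabla G_i(x,y),\, \nabla G_j(x,y)\right\rangle = \left\langle \nabla f_i(x),\, \nabla f_j(x)\right\rangle + \left\langle \nabla g_i(y),\, \nabla g_j(y)\right\rangle,
\]
where there are no cross terms precisely because the two gradient blocks live in complementary coordinate subspaces $\bR^m$ and $\bR^n$. Since $f$ and $g$ are horizontally weakly conformal, both summands vanish, so $\left\langle \nabla G_i,\, \nabla G_j\right\rangle = 0$ for $i \neq j$. For the equal-length condition, the same block decomposition gives
\[
\|\nabla G_j(x,y)\|^2 = \|\nabla f_j(x)\|^2 + \|\nabla g_j(y)\|^2;
\]
the horizontal weak conformality of $f$ makes $\|\nabla f_j(x)\|^2$ independent of $j$, and likewise $\|\nabla g_j(y)\|^2$ is independent of $j$, so their sum is independent of $j$ as well. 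Hence $G$ is a horizontally weakly conformal map.

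Finally, applying Theorem \ref{tma} to $G$ yields $\Disc G = \{0\}$ together with Thom regularity at $V_G$; because the critical value is isolated, this is exactly Thom regularity in the classical sense, which gives the ``in particular'' clause. I do not expect a genuine obstacle here: the entire content is the observation that both the inner products and the squared norms of the component gradients split additively over the two variable groups, the cross terms being annihilated by separability. The one point worth stating carefully is that this additive splitting is precisely what allows the pointwise conditions holding for $f$ and for $g$ to combine into the same two conditions for $G$.
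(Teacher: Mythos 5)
Your proof is correct and coincides with the argument the paper intends: the paper states this proposition without a separate proof precisely because the paragraph preceding it already records the block splitting $\nabla G_j(x,y) = \left(\nabla f_j(x), \nabla g_j(y)\right)$ for separable variables and the consequence $\Disc G = \{0\}$, from which the orthogonality and equal-length conditions add componentwise exactly as you compute, and the ``in particular'' clause is then Theorem~\ref{tma}. Nothing is missing, including your correct closing remark that Thom regularity at $V_G$ together with the isolated critical value is what the paper calls Thom regularity in the classical sense.
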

		
		\vspace{0.2cm}
		
		\begin{corollary}
			Let $ (G_1, G_2):(\bR^{2m},0)\to (\bR^2,0 ) $ and $ (G_3, G_4):(\bR^{2n},0)\to (\bR^2,0 ) $ be germs of polynomial horizontally weakly conformal map that are in separable variables. Then $ \left( G_1G_3 - G_2G_4, G_1G_4+G_2G_3 \right):(\bR^{2(m+n)},0)\to (\bR^2,0 )  $ a horizontally weakly conformal map. In particular, it is Thom regular in the classical sense.
		\end{corollary}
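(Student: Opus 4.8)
The plan is to reduce the statement to Proposition \ref{t1} and then invoke Theorem \ref{tma}. First I would regard all four component functions $G_1, G_2, G_3, G_4$ as germs on the common product space $\bR^{2(m+n)} = \bR^{2m} \times \bR^{2n}$, which has even dimension $2(m+n)$ and so fits the format of Proposition \ref{t1} with $m+n$ in place of $n$. By the separable-variable hypothesis, $G_1, G_2$ depend only on the first block of coordinates (say $x \in \bR^{2m}$) and $G_3, G_4$ depend only on the second block (say $y \in \bR^{2n}$). Consequently their gradients on $\bR^{2(m+n)}$ acquire the block form $\nabla G_i = (\nabla_x G_i, 0)$ for $i = 1, 2$ and $\nabla G_j = (0, \nabla_y G_j)$ for $j = 3, 4$.

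Next I would verify that the hypotheses of Proposition \ref{t1} hold for this extended configuration. Since appending zero components to a gradient changes neither inner products nor norms, the pair $(G_1, G_2)$ remains horizontally weakly conformal on $\bR^{2(m+n)}$: one has $\langle \nabla G_1, \nabla G_2 \rangle = \langle \nabla_x G_1, \nabla_x G_2 \rangle = 0$ and $\|\nabla G_1\|^2 = \|\nabla_x G_1\|^2 = \|\nabla_x G_2\|^2 = \|\nabla G_2\|^2$, and likewise for $(G_3, G_4)$. The key observation is that the two remaining hypotheses of Proposition \ref{t1} become automatic: because the gradients of $G_1, G_2$ are supported on the $x$-block while those of $G_3, G_4$ are supported on the $y$-block, every cross inner product $\langle \nabla G_i, \nabla G_j \rangle$ with $i \in \{1,2\}$ and $j \in \{3,4\}$ vanishes identically. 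Hence $\langle \nabla G_1, \nabla G_3 \rangle - \langle \nabla G_2, \nabla G_4 \rangle = 0$ and $\langle \nabla G_1, \nabla G_4 \rangle + \langle \nabla G_2, \nabla G_3 \rangle = 0$ hold trivially.

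With all hypotheses satisfied, Proposition \ref{t1} directly yields that $H := (G_1 G_3 - G_2 G_4,\, G_1 G_4 + G_2 G_3)$ is a horizontally weakly conformal map on $\bR^{2(m+n)}$; note that $H$ is polynomial, hence analytic, since it is built from products and sums of the polynomial germs $G_1,\dots,G_4$. Finally, applying Theorem \ref{tma} to $H$ gives $\Disc H = \{0\}$ together with Thom regularity at $V_H$, which is precisely Thom regularity in the classical sense. I do not anticipate a serious obstacle here: the only point that requires care is confirming that viewing the two separated maps on the common product space preserves horizontal weak conformality and forces the cross-gradient terms to vanish, and this is exactly the mechanism by which the extra hypotheses of Proposition \ref{t1} are satisfied without any computation.
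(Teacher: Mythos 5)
Your proposal is correct and follows exactly the route the paper intends: the corollary is stated as an immediate consequence of Proposition \ref{t1}, with the separable-variables hypothesis forcing the cross-gradient conditions $\left\langle \nabla G_1, \nabla G_3 \right\rangle - \left\langle \nabla G_2, \nabla G_4 \right\rangle = 0$ and $\left\langle \nabla G_1, \nabla G_4 \right\rangle + \left\langle \nabla G_2, \nabla G_3 \right\rangle = 0$ to hold trivially, and Theorem \ref{tma} then yielding $\Disc H = \{0\}$ and Thom regularity in the classical sense. Your block-gradient verification that horizontal weak conformality of each pair is preserved on the product space is precisely the small check the paper leaves implicit.
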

		
		\vspace{0.2cm}

		\subsection{Mixed functions}
		
		A function $f:\mathbb{C}^{n}\rightarrow \mathbb{C}$  is called a \textit{mixed polynomial function} if  $ f(z)=f\left(\textbf{z}, \bar{\textbf{z}} \right)= \Sigma_{\nu, \mu}c_{\nu, \mu} \textbf{z}^{\nu}\bar{\textbf{z}}^{\mu} $,  where $c_{\nu, \mu} \in \bC$,  $\textbf{z}^{\nu}$  and $\bar{\textbf{z}}^{\mu}$ are multi-monomials in the variables $z_j,\bar{z}_j$, with $j=1,\ldots,n$.  	Mixed singularities have been systematically studied by many researchers in the last years, 
		for instance  \cite{AC,ACT-inf,AR,ART,ART2,FM,BPS,CT0,CT2,CT1,CSS3,EO,IKO,Oka2,Oka3,Oka4,Oka5,Oka6,Oka7,PS0,PS2,PS3,PS1,R,Ri,RSR,RSV,S1,Ti4}.
		
		One may view $f$  as a real analytic map of $2n$ variables $(x,y)$ from $\bR^{2n}$ to $\bR^{2}$ by identifying  $\mathbb{C}^n$ with $\bR^{2n}$, $(z_{1},\ldots, z_{n}) = z\mapsto  (x,y)=(x_1,y_1,\ldots,x_n,y_n)$,  writing $\textbf{z=x+iy} \in \mathbb{C}^{n}$, where $z_{j}=x_{j}+iy_{j} \in \mathbb{C}$ with $x_j, y_j \in \bR$ for $j=1,...,n$. Moreover, one can define the \textit{holomorphic} and \textit{anti-holomorphic} gradients of $ f $ as follows:
		\[\textrm{d} f:=\left(\frac{\partial f}{\partial z_{1}},...,\frac{\partial f}{\partial z_{n}} \right) \textrm{  and  } \bar{\textrm{d}}f:= \left(\frac{\partial f}{\partial  \bar{z}_1},...,\frac{\partial f}{\partial \bar{z}_{n}} \right).\]
		
		\vspace{0.2cm}
		where, 
		\vspace{0.2cm}
		\begin{equation*}
			\frac{\partial f}{\partial z_j}=\frac{\partial u}{\partial z_j}+i \frac{\partial v}{\partial z_j} \textrm{   and   } \frac{\partial f}{\partial \bar{z}_j}=\frac{\partial u}{\partial \bar{z}_j}+i \frac{\partial v}{\partial \bar{z}_j}.
		\end{equation*}
		
		\vspace{0.2cm}
		
		For the next result, whenever $\vec{u}$ and $\vec{v}$ are vectors in $\mathbb{C}^n$ we denote its Hermitian product by $\left\langle\vec{u}, \vec{v} \right\rangle_\mathbb{C}$.

		\vspace{0.2cm}

		\begin{proposition}\cite{ADRS}\label{c1}
			A  mixed polynomial function germ $f: (\mathbb{C}^m,0) \to (\mathbb{C},0) $ is a horizontally weakly conformal maps if and only if $\left\langle \overline{\dd f}, \bar{\dd}f \right\rangle _\mathbb{C}=0$. 
		\end{proposition}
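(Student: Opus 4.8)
The plan is to reduce the complex-Hermitian condition to the two real conditions defining horizontal weak conformality, via the Wirtinger calculus. Write $f = u + iv$ with $u = \Re f$ and $v = \im f$ real analytic on $\bR^{2m}$, so that under the identification $\mathbb{C}^m \cong \bR^{2m}$ of the excerpt one has $G_1 = u$ and $G_2 = v$. By definition, $f$ is horizontally weakly conformal exactly when $\langle \nabla u, \nabla v\rangle = 0$ and $\|\nabla u\|^2 = \|\nabla v\|^2$, where $\nabla$ and $\langle\,\cdot\,,\,\cdot\,\rangle$ denote the real gradient and inner product in $\bR^{2m}$. The whole point is that the single complex equation $\langle \overline{\dd f}, \bar\dd f\rangle_\mathbb{C}=0$ packages these two real equations as the real and imaginary parts of one complex number.

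First I would unwind the Hermitian product. With the standard convention one finds
\[ \left\langle \overline{\dd f}, \bar\dd f\right\rangle_\mathbb{C} = \overline{\sum_{j=1}^m \frac{\partial f}{\partial z_j}\,\frac{\partial f}{\partial \bar z_j}}, \]
and since an overall conjugation is immaterial to vanishing, it suffices to analyze $\sum_{j=1}^m \frac{\partial f}{\partial z_j}\frac{\partial f}{\partial \bar z_j}$. Writing $z_j = x_j + iy_j$ and abbreviating the real partial derivatives by $u_{x_j}, u_{y_j}, v_{x_j}, v_{y_j}$, the Wirtinger expansions are
\[ \frac{\partial f}{\partial z_j} = \tfrac12(u_{x_j}+v_{y_j}) + \tfrac{i}{2}(v_{x_j}-u_{y_j}), \qquad \frac{\partial f}{\partial \bar z_j} = \tfrac12(u_{x_j}-v_{y_j}) + \tfrac{i}{2}(v_{x_j}+u_{y_j}). \]

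Multiplying these two expressions and summing over $j$ is the main computational step. Collecting real and imaginary parts, I expect the antisymmetric cross terms $u_{x_j}u_{y_j}$ and $v_{x_j}v_{y_j}$ to cancel in the imaginary part while the symmetric terms double, and the real part to reorganize into a difference of two squared norms, so that
\[ \sum_{j=1}^m \frac{\partial f}{\partial z_j}\,\frac{\partial f}{\partial \bar z_j} = \tfrac14\left(\|\nabla u\|^2 - \|\nabla v\|^2\right) + \tfrac{i}{2}\,\langle \nabla u, \nabla v\rangle, \]
using $\sum_j (u_{x_j}^2 + u_{y_j}^2) = \|\nabla u\|^2$, the analogous identity for $v$, and $\sum_j (u_{x_j}v_{x_j} + u_{y_j}v_{y_j}) = \langle \nabla u, \nabla v\rangle$. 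A complex number vanishes iff its real and imaginary parts both vanish, so the displayed quantity is zero precisely when $\|\nabla u\|^2 = \|\nabla v\|^2$ and $\langle \nabla u, \nabla v\rangle = 0$, which are exactly the two defining conditions of a horizontally weakly conformal map. This proves both implications at once.

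The only genuine obstacle is the careful bookkeeping in the Wirtinger expansion: one must check that the cross terms cancel in the imaginary part, that the symmetric terms double into $\langle \nabla u, \nabla v\rangle$, and that the real part collapses into the two gradient norms. There is no conceptual difficulty beyond this algebra; once the boxed identity is confirmed, the equivalence follows immediately by separating real and imaginary parts.
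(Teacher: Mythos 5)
Your proof is correct. There is nothing in the paper to compare it against: the paper states Proposition~\ref{c1} without proof, citing \cite{ADRS}, so the only question is whether your argument stands on its own --- and it does. The identity you flagged as ``expected'' rather than verified does hold: with $a=u_{x_j}+v_{y_j}$, $b=v_{x_j}-u_{y_j}$, $c=u_{x_j}-v_{y_j}$, $d=v_{x_j}+u_{y_j}$ one has $(a+ib)(c+id)=(ac-bd)+i(ad+bc)$, where $ac-bd=u_{x_j}^2+u_{y_j}^2-v_{x_j}^2-v_{y_j}^2$ and $ad+bc=2\,(u_{x_j}v_{x_j}+u_{y_j}v_{y_j})$, so summing over $j$ gives exactly your displayed formula. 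A slightly slicker route to the same identity, which avoids the bookkeeping entirely, is to write
\begin{equation*}
\sum_{j=1}^m \frac{\partial f}{\partial z_j}\,\frac{\partial f}{\partial \bar z_j}
=\frac14\sum_{j=1}^m\left[\left(\frac{\partial f}{\partial x_j}\right)^{2}+\left(\frac{\partial f}{\partial y_j}\right)^{2}\right]
=\frac14\left(\|\nabla u\|^{2}-\|\nabla v\|^{2}\right)+\frac{i}{2}\,\langle \nabla u,\nabla v\rangle ,
\end{equation*}
since each squared complex partial $(u_{t}+iv_{t})^{2}$ contributes $u_t^2-v_t^2$ to the real part and $2u_tv_t$ to the imaginary part. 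Your handling of the one convention-dependent point --- the outer conjugation coming from the Hermitian product $\left\langle \overline{\dd f}, \bar{\dd}f \right\rangle_{\mathbb C}$ --- is also fine, since conjugation is irrelevant to vanishing; with that, separating real and imaginary parts yields precisely the two defining conditions of horizontal weak conformality, in both directions at once.
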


			\begin{proposition}\cite[Corollary 4.4]{ADRS}
				Let $f,g:{\bC}^n \to {\bC}$ be polynomial holomorphic functions. The mixed function $ F=f\bar{g} $ is a horizontally weakly conformal maps if and only if $ \left\langle \overline{\dd f}, \overline{\dd g} \right\rangle _\mathbb{C} =0 $.
			\end{proposition}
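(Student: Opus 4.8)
The plan is to reduce the statement to Proposition \ref{c1}, which characterizes horizontal weak conformality of a single mixed function by the vanishing of $\langle \overline{\dd f}, \bar{\dd}f\rangle_{\bC}$, by computing the two gradients of $F = f\bar{g}$ explicitly and then removing a scalar factor.

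First I would compute $\dd F$ and $\bar{\dd} F$ from the product rule for the Wirtinger derivatives, exploiting holomorphicity. Since $g$ is holomorphic, $\bar{g}$ is anti-holomorphic, so $\partial_{z_j}\bar{g} = 0$ and $\partial_{z_j}F = (\partial_{z_j}f)\,\bar{g}$; thus $\dd F = \bar{g}\,\dd f$ and, conjugating, $\overline{\dd F} = g\,\overline{\dd f}$. Dually, since $f$ is holomorphic, $\partial_{\bar{z}_j}f = 0$, while $\partial_{\bar{z}_j}\bar{g} = \overline{\partial_{z_j}g}$; hence $\partial_{\bar{z}_j}F = f\,\overline{\partial_{z_j}g}$, that is, $\bar{\dd} F = f\,\overline{\dd g}$.

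Next I would substitute these into the Hermitian product and pull out the common scalar factor. Expanding termwise,
\begin{equation*}
	\left\langle \overline{\dd F}, \bar{\dd} F\right\rangle_{\bC} = g\bar{f}\,\sum_{j} \overline{\partial_{z_j}f}\,\partial_{z_j}g = g\bar{f}\,\left\langle \overline{\dd f}, \overline{\dd g}\right\rangle_{\bC}.
\end{equation*}
By Proposition \ref{c1} applied to $F$, the germ $F$ is horizontally weakly conformal if and only if the left-hand side vanishes identically, that is, if and only if $g\bar{f}\,\langle \overline{\dd f}, \overline{\dd g}\rangle_{\bC} \equiv 0$.

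It remains to cancel the factor $g\bar{f}$, and this is the only delicate point of the argument. Because $F$ is non-constant, both $f$ and $g$ are nonzero holomorphic germs, so their zero sets $V_f$ and $V_g$ are proper analytic subsets, and $g\bar{f}$ (which reads $\bar{g}f$ under the opposite convention for $\langle\cdot,\cdot\rangle_{\bC}$) is nonzero on the dense open complement of $V_f\cup V_g$. On that dense set the identity forces $\langle \overline{\dd f}, \overline{\dd g}\rangle_{\bC} = 0$, and since this expression is a mixed polynomial, hence continuous, it must then vanish everywhere; the reverse implication is immediate. This establishes the claimed equivalence. The main obstacle is precisely this cancellation: one cannot divide by $g\bar{f}$ at its zeros, so the passage from $g\bar{f}\,h \equiv 0$ to $h \equiv 0$ has to be carried out through density of $\{g\bar{f}\neq 0\}$ together with the continuity of $h := \langle \overline{\dd f}, \overline{\dd g}\rangle_{\bC}$.
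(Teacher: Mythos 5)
Your proof is correct: the Wirtinger computations $\dd F=\bar{g}\,\dd f$ and $\bar{\dd}F=f\,\overline{\dd g}$, the resulting factorization $\left\langle \overline{\dd F}, \bar{\dd}F\right\rangle_\bC = g\bar{f}\,\left\langle \overline{\dd f}, \overline{\dd g}\right\rangle_\bC$, and the cancellation of the factor $g\bar{f}$ via density of $\{f\neq 0,\, g\neq 0\}$ together with continuity of the mixed polynomial $\left\langle \overline{\dd f}, \overline{\dd g}\right\rangle_\bC$ are all sound (the non-vanishing of $f$ and $g$ being guaranteed by the paper's standing convention that germs are non-constant). The paper itself states this proposition without proof, quoting it from \cite[Corollary 4.4]{ADRS}, and your reduction to Proposition \ref{c1} is exactly the intended derivation --- in the source it is a corollary of that same characterization --- so your approach coincides with the one behind the cited result.
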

			
			The next result was proved in \cite[Proposition 4]{R}, see also \cite[Proposition 4.5]{ADRS}. In our setting, it reads:
			
			
			\begin{proposition}\cite{ADRS,R}\label{alg1}
				Let $ I $ be a non-empty subset of $ \left\lbrace 1,2,\ldots,n \right\rbrace  $ and $ f $ be a mixed polynomial function of $ n $ variables $ \{z_1, z_2, \ldots,z_n\} $ that is holomorphic in the variables $ \left\lbrace z_i\,|\,i\in I \right\rbrace  $ and anti-holomorphic in the complementary variables. Namely, $ \dfrac{\partial f}{\partial \bar{z}_i}=0 $, with $ i\in I $ and $ \dfrac{\partial f}{\partial {z}_j}=0 $, with $ j\not\in I $. Then $ f $ is a horizontally weakly conformal map. Also, the set of this class of horizontally weakly conformal maps (fixing I) is stable under addition and multiplication.
			\end{proposition}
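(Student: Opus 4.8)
The plan is to reduce both assertions to the criterion of Proposition \ref{c1}, which characterizes the horizontally weakly conformal property of a mixed polynomial function by the single scalar equation $\langle \overline{\dd f}, \bar{\dd} f\rangle_{\mathbb{C}}=0$. Unwinding the Hermitian product of the conjugated holomorphic gradient $\overline{\dd f}$ against the anti-holomorphic gradient $\bar{\dd} f$, this condition is equivalent (up to an overall complex conjugation, which does not affect its vanishing) to
\[
\sum_{k=1}^{n} \frac{\partial f}{\partial z_k}\cdot\frac{\partial f}{\partial \bar{z}_k} = 0.
\]
So the whole first assertion becomes the claim that this sum vanishes identically under the hypothesis on $f$.

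First I would record that the hypothesis partitions the index set as $\{1,\ldots,n\}=I\sqcup I^{c}$ into a holomorphic part and an anti-holomorphic part: for $i\in I$ one has $\partial f/\partial \bar{z}_i=0$, and for $j\notin I$ one has $\partial f/\partial z_j=0$. Substituting into the displayed sum, every summand vanishes for a structural reason: when $k\in I$ the factor $\partial f/\partial \bar{z}_k$ is zero, and when $k\notin I$ the factor $\partial f/\partial z_k$ is zero. Hence each index contributes to at most one of the two gradients, the pairing can never produce a nonzero term, and the sum is identically zero. By Proposition \ref{c1} this shows that $f$ is horizontally weakly conformal.

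For the stability statement I would not recompute the gradient condition directly; instead I would show that the defining structural hypothesis is itself preserved under sum and product, and then reapply the first assertion. If $f$ and $g$ are both holomorphic in $\{z_i:i\in I\}$ and anti-holomorphic in $\{z_j:j\notin I\}$, then by linearity of the Wirtinger derivatives $\partial(f+g)/\partial \bar{z}_i=\partial f/\partial \bar{z}_i+\partial g/\partial \bar{z}_i=0$ for $i\in I$ and, symmetrically, $\partial(f+g)/\partial z_j=0$ for $j\notin I$, so $f+g$ lies in the same class. For the product, the Leibniz rule for Wirtinger derivatives gives $\partial(fg)/\partial \bar{z}_i=(\partial f/\partial \bar{z}_i)\,g+f\,(\partial g/\partial \bar{z}_i)=0$ for $i\in I$, and likewise $\partial(fg)/\partial z_j=0$ for $j\notin I$; hence $fg$ is again holomorphic in the variables indexed by $I$ and anti-holomorphic in the rest. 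Applying the first assertion to $f+g$ and to $fg$ then yields that both are horizontally weakly conformal, which is exactly the claimed stability.

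The computations are entirely routine; the only point requiring care is the bookkeeping of the Hermitian convention in Proposition \ref{c1}, to confirm that the vanishing of $\langle \overline{\dd f}, \bar{\dd}f\rangle_{\mathbb{C}}$ really reduces to the termwise sum $\sum_k (\partial f/\partial z_k)(\partial f/\partial \bar{z}_k)$ rather than to some expression that couples the two gradients across distinct indices. Once this is pinned down there is no genuine obstacle: the essential mechanism is that the holomorphic and anti-holomorphic supports of $f$ are disjoint in the index set, so the defining pairing vanishes summand by summand, and the same disjointness is preserved under both algebraic operations.
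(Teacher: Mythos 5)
Your proposal is correct. Note that the paper itself does not prove this proposition --- it is quoted from \cite[Proposition 4]{R} and \cite[Proposition 4.5]{ADRS} --- so there is no in-paper proof to compare against; but your route is the natural one given the tools the paper provides: the reduction via Proposition \ref{c1} to the single equation $\sum_{k}\frac{\partial f}{\partial z_k}\frac{\partial f}{\partial \bar z_k}=0$ (your handling of the Hermitian convention, noting that an overall conjugation does not affect vanishing, is exactly right), the termwise vanishing because each index $k$ kills one of the two factors, and the observation that the structural hypothesis itself --- not just the conformality conclusion --- is closed under addition (linearity of Wirtinger derivatives) and multiplication (Leibniz rule), so the first part can simply be reapplied. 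This is essentially the standard argument in the cited references, and there are no gaps.
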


			\vspace{0.2cm}

			\begin{example}\cite{ADRS}
				Consider the mixed functions $ f,g:\bC^{3}\to \bC $ given by $ f(x,y,z)=xy-\bar{z} $ and $ g(x,y,z)=x^2\bar{z} $. It follows from Proposition \ref{alg1} that $ f $, $ g $ and $ F=(xy-\bar{z})x^2\bar{z} $ are horizontally weakly conformal maps.
			\end{example}

			\vspace{0.2cm}
			
			\begin{remark} We can consider the following algorithm given in \cite[Proposition 4]{AR} to construct  classes of mixed polynomial horizontally weakly conformal maps $ f $ as in the Proposition \ref{alg1}:
				\begin{itemize}
					\item [i.] Fix a copy of $\mathbb{C}^n,$ $n\geq 2$, and a coordinate system $z=(z_{1},\ldots, z_ {n})$. 
					\item[ii.] For each $1\le k <n$ choose natural numbers $i_1,\ldots,i_k \in \{1,2,\ldots,n\}$ with $i_1<i_2<\ldots<i_k$ and fix the coordinates $(z_{i_{1}},\ldots,z_{i_{k}})$. For the complementary ordered list $q_{1}, \ldots, q_{n-k}\in \{1,2,\ldots,n\}\m\{i_1,\ldots,i_k \}$, $q_{1}< \ldots< q_{n-k}$, consider the remaining coordinates $(z_{q_{1}},\ldots,z_{q_{n-k}})$.
					\item[iii.] For any natural numbers $j$, $t$ and $p$, choose arbitrary holomorphic functions $f_j(z_{i_{1}},\ldots,z_{i_{k}})$, $r_t(z_{i_{1}},\ldots,z_{i_{k}})$, $g_j(z_{q_{1}},\ldots,z_{q_{n-k}})$ and $h_p(z_{q_{1}},\ldots,z_{q_{n-k}})$.
					\item[iv.] Define the mixed function germ $f:(\mathbb{C}^n,0)\to (\mathbb{C},0)$ by $f(z_1,\ldots,z_n)=$
					$$\sum_{\alpha=1}^{j}f_{\alpha}(z_{i_{1}},\ldots,z_{i_{k}})\overline{g_{\alpha}(z_{q_{1}},\ldots,z_{q_{n-k}})}+\sum_{\beta=1}^{t}r_\beta(z_{i_{1}},\ldots,z_{i_{k}})+\sum_{\gamma=1}^{p}\overline{h_\gamma (z_{q_{1}},\ldots,z_{q_{n-k}}) }.$$
				\end{itemize}
			\end{remark}
			
			\vspace{0.3cm}
			
			\begin{example}
				Let $ f:(\mathbb{C}^5,0) \to (\mathbb{C},0) $ given by $$ f(z_1,z_2,z_3,z_4,z_5)=z_{1}^{4}z_{5}^3\bar{z}_{2}^{5}+z_3^2 \bar{z}_4 +z_{1}^{4}-z_{3}^6 -\bar{z}_2^7\bar{z}_{4}^{3}.$$ Considering $f_1(z_1,z_3,z_5)=z_{1}^{4}z_{5}^3$, $f_2(z_1,z_3)=z_3^2$, $g_1(z_2,z_4)=z_{2}^{5}$, $g_2(z_2,z_4)=z_4$, $r(z_1,z_3)=z_{1}^{4}-z_{3}^6$ and $h(z_2,z_4)= z_2^7z_{4}^{3}$, one has $f=f_1\bar{g}_1+f_2\bar{g}_2 +r-\bar{h}$. By Proposition \ref{alg1}, $f$ is a mixed horizontally weakly conformal map.
			\end{example}
			
			\vspace{0.2cm}

		\section{The Thom Irregular Points set}
		
		In the article \cite{PT}, the authors addressed the problem of the existence of a Thom $(\aa_G)$-regular stratification for germs of functions of the form $f\bar{g}: (\mathbb{C}^n,0) \to (\mathbb{C},0)$ with $n > 1$, where $f$ and $g$ are germs of non-constant holomorphic functions, and the "bar" denotes complex conjugation. They introduced the concept of \textit{Thom irregular points} as follows:
		
		\begin{equation}\label{eq:Thom}
			NT_{f\bar{g}}:=\left\lbrace x \in V_{f\bar{g}} \, | \begin{tabular}{l}
				\textit{there does not exist a Thom stratification Thom  $ (a_{f\bar{g}}) $-regular on $ V_{f\bar{g}} $ }   \\
				\textit{such $ x $ belongs to a stratum of positive dimension}.
			\end{tabular}
			\right\rbrace 
		\end{equation}

		\vspace{0.3cm}
		
		With these notations and definitions, they proved Theorem \ref{tct3}, which allows, under certain conditions, to determine the Thom regularity of functions of the type $ f\bar{g} $ by studying the behavior of maps of the type $ (f,g) $.
		
		\vspace{0.3cm}
		
		\begin{theorem}\cite{PT}\label{tct3}
			Let $ f,g:(\mathbb{C}^n,0) \to (\mathbb{C},0) $ be germs of holomorphic functions such that $ \Disc(f,g) $ contains only curves tangent to the coordinate axes. Then
			\[NT_{f\bar{g}} \subset NT_{(f,g)},\]
			where $ NT_{(f,g)} $ is defined analogously, replacing the function $ f\bar{g} $ with $ (f,g) $ in the above definition. 
			
		\end{theorem}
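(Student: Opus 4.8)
The plan is to prove the contrapositive, $x\notin NT_{(f,g)}\Rightarrow x\notin NT_{f\bar{g}}$, and to organise everything around the factorisation $f\bar{g}=\mu\circ(f,g)$, where $\mu\colon\bC^2\to\bC$ is the real-analytic map $\mu(a,b)=a\overline{b}$. First I would show that the Thom-irregularity can only be concentrated on $V_f\cap V_g=V_{(f,g)}$, so that $NT_{f\bar{g}}\subseteq V_f\cap V_g$ and the inclusion is even meaningful. Indeed, away from the intersection, say at $x\in V_f\setminus V_g$, the factor $\overline{g}$ is a non-vanishing anti-holomorphic unit and $V_{f\bar{g}}=V_f$ locally; writing $\dd F=\overline{g}\,\dd f+f\,\overline{\dd g}$ with $F:=f\bar{g}$ and invoking the \L ojasiewicz inequality for the holomorphic $f$ (Theorem \ref{loj}), the term $f\,\overline{\dd g}$ is of strictly lower order than $\overline{g}\,\dd f$ along any sequence $y_n\to x$ off $V_f$, so $\ker\dd F_{y_n}$ and $\ker\dd f_{y_n}$ share the same limit. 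Since $f$ is Thom regular (Theorem \ref{pre-r1}), its Thom $(\aa_f)$-regular stratification of $V_f$ serves $F$ as well, and the symmetric argument covers $V_g\setminus V_f$.

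Now I fix $x\in V_f\cap V_g$ and assume $x\notin NT_{(f,g)}$, so that there is a $\partial$-Thom $(\aa_{(f,g)})$-regular stratification $\mathbb{S}$ of $V_{(f,g)}$ placing $x$ in a positive-dimensional stratum $S$. I would extend $\mathbb{S}$ to a Whitney stratification of $V_{f\bar{g}}=V_f\cup V_g$ by adjoining the stratifications of $V_f\setminus V_g$ and $V_g\setminus V_f$ produced above, refining $\mathbb{S}$ if necessary so that $S$ is simultaneously compatible with Thom stratifications of $V_f$ and of $V_g$, and then verify the Thom $(\aa_{f\bar{g}})$-condition for the big stratum $\bC^n\setminus V_{f\bar{g}}$ over $S$. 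The basic tool is the inclusion $\ker\dd(f,g)_y=\ker\dd f_y\cap\ker\dd g_y\subseteq\ker\dd F_y$, which holds because $\dd F=\overline{g}\,\dd f+f\,\overline{\dd g}$ annihilates $\ker\dd f\cap\ker\dd g$. Hence, for a sequence $y_n\to x'\in S$ with $y_n\notin V_{f\bar{g}}$ and $y_n\notin\Sing(f,g)$, refining so that $\ker\dd(f,g)_{y_n}\to\cT'$ and $\ker\dd F_{y_n}\to\cT$ gives $\cT'\subseteq\cT$; the Thom $(\aa_{(f,g)})$-regularity of $\mathbb{S}$ yields $\textrm{T}_{x'}S\subseteq\cT'\subseteq\cT$, which is exactly what we need.

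The main obstacle, and the only place where the hypothesis on $\Disc(f,g)$ enters, is the complementary family of sequences with $y_n\in\Sing(f,g)$, equivalently $(f,g)(y_n)\in\Disc(f,g)$, and $f(y_n),g(y_n)\neq0$; for these there is no $(f,g)$-regular companion sequence and the previous step does not apply. Here I would use the chain-rule identity $\ker\dd F_{y_n}=\dd(f,g)_{y_n}^{-1}\big(\ker\dd\mu_{(a_n,b_n)}\big)$, $(a_n,b_n)=(f,g)(y_n)$, together with the computation $\ker\dd\mu_{(a,b)}=\{(\dot a,\dot b)\mid\overline{b}\,\dot a+a\,\overline{\dot b}=0\}$: dividing by $a$ and letting $b/a\to0$ forces $\overline{\dot b}\to0$, so along a branch tangent to the $a$-axis one has $\ker\dd\mu_{(a,b)}\to\bC\times\{0\}$, and symmetrically $\{0\}\times\bC$ for the $b$-axis. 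Since by hypothesis every branch of $\Disc(f,g)$ is tangent to a coordinate axis, the critical values $(a_n,b_n)$ approach the origin tangentially to an axis, so the limit $\cT$ of $\ker\dd F_{y_n}$ is a limit of pullbacks $\dd(f,g)^{-1}(\bC\times\{0\})=\ker\dd g$ or $\dd(f,g)^{-1}(\{0\}\times\bC)=\ker\dd f$. Thus $\cT$ is governed by the single holomorphic function $g$ or $f$, each Thom regular by Theorems \ref{theorem-hi} and \ref{pre-r1}, and the compatibility of $S$ with the Thom stratifications of $V_f$ and $V_g$ gives $\textrm{T}_{x'}S\subseteq\cT$ once more.

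Putting the two families of sequences together shows that the extended stratification is Thom $(\aa_{f\bar{g}})$-regular at $S$, with $x$ in the positive-dimensional stratum $S$; hence $x\notin NT_{f\bar{g}}$, and the contrapositive gives $NT_{f\bar{g}}\subseteq NT_{(f,g)}$. I expect the delicate points to be two: first, the passage to the limit in $\ker\dd F_{y_n}=\dd(f,g)_{y_n}^{-1}(\ker\dd\mu)$ when $\dd(f,g)_{y_n}$ is rank-deficient, which needs a careful normalisation, for instance via a curve-selection argument on $\Sing(f,g)$; and second, the Whitney-compatibility of the extended stratification, that is, arranging a single $S$ that is at once a Thom stratum for $(f,g)$, for $f$, and for $g$. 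The geometric heart of the argument is the identity linking $\ker\dd F$ to $\ker\dd\mu$, through which the tangency of $\Disc(f,g)$ to the axes is precisely what collapses the troublesome limits onto the coordinate planes.
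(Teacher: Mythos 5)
A preliminary remark: the survey you are working from does not prove Theorem \ref{tct3} at all --- it quotes it from \cite{PT} --- so your proposal can only be measured against the mathematics itself, not against an internal proof. Your skeleton (factor $f\bar{g}=\mu\circ (f,g)$, use $\ker \dd (f,g)_y\subseteq \ker \dd (f\bar{g})_y$ for sequences off $\Sing(f,g)$, treat $\Sing(f,g)$ separately, and control $V_f\setminus V_g$ by the \L ojasiewicz inequality together with Thom regularity of holomorphic functions) is the right one, but the two points you yourself flag as ``delicate'' are genuine gaps, and the first is an invalid inference rather than a missing detail. From $\ker \dd\mu_{(a_n,b_n)}\to \bC\times\{0\}$ you cannot conclude that $\dd (f,g)_{y_n}^{-1}\bigl(\ker \dd\mu_{(a_n,b_n)}\bigr)$ converges to a limit of $\dd (f,g)_{y_n}^{-1}(\bC\times\{0\})$: at $y_n\in \Sing(f,g)$ the image of $\dd (f,g)_{y_n}$ is a single complex line $L_n\subset\bC^2$, the preimage of a plane $W$ equals $\dd (f,g)_{y_n}^{-1}(W\cap L_n)$, and $W\cap L_n$ (hence the dimension of the preimage) jumps under arbitrarily small perturbations of $W$; preimages under rank-deficient, varying linear maps simply do not pass to limits. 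What rescues the step is an exact identity, not a limit: near $0$ one has $\Sing f\subseteq V_f$ and $\Sing g\subseteq V_g$, so at $y\in\Sing(f,g)\setminus V_{f\bar{g}}$ both $\dd f_y$ and $\dd g_y$ are nonzero and proportional, whence $\ker \dd (f,g)_y=\ker \dd f_y=\ker \dd g_y$ has real dimension $2n-2$; since the tangency hypothesis is equivalent to $\Disc (f\bar{g})=\{0\}$ (\cite[Theorem 2.3]{PT}), $f\bar{g}$ is a submersion at $y$, so $\ker \dd (f\bar{g})_y$ also has dimension $2n-2$ and the chain-rule inclusion forces $\ker \dd (f\bar{g})_y=\ker \dd f_y=\ker \dd g_y$ \emph{exactly}. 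This dimension count is where the hypothesis actually enters --- through submersivity of $f\bar{g}$, not through a Grassmannian limit of $\ker\dd\mu$. (Note also that your ``equivalently'' between $y_n\in \Sing(f,g)$ and $(f,g)(y_n)\in\Disc (f,g)$ is only an implication; sequences in $(f,g)^{-1}(\Disc (f,g))\setminus \Sing(f,g)$ are covered by neither of your two cases if $NT_{(f,g)}$ is read in the $\partial$-Thom sense.)

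The second acknowledged gap --- the stratification bookkeeping --- is also real and cannot be discharged by ``refining $\mathbb{S}$ if necessary.'' By the definition \eqref{eq:Thom}, $x\notin NT_{f\bar{g}}$ demands a \emph{single} stratification of the whole germ $V_{f\bar{g}}$ that is Thom $(\aa_{f\bar{g}})$-regular at \emph{every} stratum and keeps $x$ in a positive-dimensional stratum. Refining to make $S$ simultaneously compatible with Thom stratifications of $f$, of $g$, and of $(f,g)$ threatens exactly this: a common refinement may cut $S$ and drop $x$ into a point-stratum, and regularity conditions over $S$ are not inherited when the strata approaching $S$ are subdivided (condition $(\ast)$ or Whitney (a) for a pair $(W'_\beta,S)$ with $W'_\beta\subsetneq W_\beta$ of smaller dimension does not follow from the same condition for $(W_\beta,S)$, because the inclusion between limits of tangent spaces goes the wrong way). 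For the same reason your Step 1, the claim $NT_{f\bar{g}}\subseteq V_f\cap V_g$ proved by a purely local analysis near $V_f\setminus V_g$, does not follow as stated: excluding a point of $V_f\setminus V_g$ from $NT_{f\bar{g}}$ still requires a globally Thom-regular stratification of all of $V_{f\bar{g}}$, hence the part over $V_f\cap V_g$ cannot be postponed. In summary: the \L ojasiewicz computation and the kernel inclusion $\ker\dd(f,g)\subseteq\ker\dd(f\bar{g})$ are correct and genuinely useful, but both the analysis along $\Sing(f,g)$ and the final assembly of the stratification must be replaced by actual arguments (the exact kernel identity above, and an explicit construction of the refined stratification that provably preserves $S$ near $x$) before this becomes a proof.
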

		
		\vspace{0.3cm}
		
		Let $ G: (\mathbb{R}^m, 0) \to (\mathbb{R}^p, 0) $, with $ m>p\ge 2 $, be a germ of an analytic map such that $ \Disc G = \{0\} $. In this work, we will consider the set $ NT_G $ of {\it Thom irregular points of $ G $} as in \eqref{eq:Thom}, replacing the function $ f\bar{g} $ with $ G $.
		
		\vspace{0.3cm}
		
		It is clear that if there exists a sufficiently small $ \epsilon > 0 $ such that $B_{\e}^{m} \cap NT_G = \emptyset $, then $ G $ is Thom regular in $ V_G $, meaning that there exists a Thom $ (\aa_G) $-regular stratification in $ V_G $.
		
		\vspace{0.3cm}
		
		An immediate consequence of Theorem \ref{tct3} is the following criterion for deciding whether the map $ f\bar{g} $ is Thom regular.
		
		\vspace{0.3cm}
		
		\begin{corollary}
			Let $ f,g:(\mathbb{C}^n,0) \to (\mathbb{C},0) $ be germs of holomorphic functions such that $ \Disc(f,g) $ only contains curves tangent to the coordinate axes. If the map $ (f,g) $ is Thom regular, then $ f\bar{g} $ is Thom regular\footnote{In the classical sense. In fact, it was shown in \cite[Theorem 2.3]{PT} that the hypothesis \textit{``$ \Disc(f,g) $ only contains curves tangent to the coordinate axes''} is equivalent to $ \Disc f\bar{g} = \{0\} $}. In particular, if $ (f,g) $ is an ICIS, then $ f\bar{g} $ is Thom regular.
		\end{corollary}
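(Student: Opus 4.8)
The plan is to derive this directly from Theorem \ref{tct3} together with the observation recorded just before the statement, namely that $G$ is Thom regular in $V_G$ as soon as $B_\e^{2n} \cap NT_G = \emptyset$ for some sufficiently small $\e>0$. First I would unwind the hypothesis that $(f,g)$ is Thom regular: by definition this produces a Thom $(\aa_{(f,g)})$-regular stratification of $V_{(f,g)}$, so no point in a neighborhood of the origin can belong to the irregular set $NT_{(f,g)}$; that is, $B_\e^{2n} \cap NT_{(f,g)} = \emptyset$ for $\e$ small.

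Next I would invoke Theorem \ref{tct3}. Since $\Disc(f,g)$ contains only curves tangent to the coordinate axes, the theorem furnishes the inclusion $NT_{f\bar g} \subset NT_{(f,g)}$. Intersecting with $B_\e^{2n}$ and using the previous step gives $B_\e^{2n} \cap NT_{f\bar g} \subset B_\e^{2n} \cap NT_{(f,g)} = \emptyset$. Applying the observation once more, now to $G = f\bar g$, the emptiness of $B_\e^{2n} \cap NT_{f\bar g}$ yields a Thom $(\aa_{f\bar g})$-regular stratification of $V_{f\bar g}$, so $f\bar g$ is Thom regular. To upgrade this to the classical sense I would appeal to the equivalence quoted in the footnote to Theorem \ref{tct3}: the tangency hypothesis on $\Disc(f,g)$ is equivalent to $\Disc f\bar g = \{0\}$, so $f\bar g$ has an isolated critical value and the two notions of Thom regularity coincide.

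For the final assertion I would check that the ICIS hypothesis feeds the statement already proved. If $(f,g)$ is an ICIS, then $\Sing(f,g) \cap V_{(f,g)} = \{0\}$, so, viewed as a real analytic map $(\bR^{2n},0)\to(\bR^4,0)$, it has an isolated singularity at the origin in the precise sense required by Theorem \ref{ttr}; that theorem then guarantees that $(f,g)$ is Thom regular in $V_{(f,g)}$, and the conclusion follows from the part already established. I do not expect a genuine obstacle, since the corollary is formal once Theorem \ref{tct3} is available; the only point requiring care is verifying that emptiness of the irregular set inside a fixed ball is preserved under the inclusion $NT_{f\bar g}\subset NT_{(f,g)}$ and translates back into the existence of an honest regular stratification, which is exactly the content of the observation preceding the corollary.
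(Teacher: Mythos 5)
Your first two paragraphs reproduce what the paper intends: it offers no separate proof, presenting the corollary as an immediate consequence of Theorem \ref{tct3} combined with the $NT$-criterion stated just before it, and your chain (Thom regularity of $(f,g)$ empties $NT_{(f,g)}$ near $0$, the inclusion $NT_{f\bar g}\subset NT_{(f,g)}$ transfers this to $f\bar g$, and the footnote's equivalence $\Disc f\bar g=\{0\}$ places the conclusion in the classical sense) is exactly that argument. One caveat: by the letter of definition \eqref{eq:Thom}, Thom regularity of $(f,g)$ does \emph{not} give $B^{2n}_{\e}\cap NT_{(f,g)}=\emptyset$, since a point that every Thom stratification forces into a zero-dimensional stratum (typically the origin itself, whenever $V_{(f,g)}$ is singular there) belongs to $NT_{(f,g)}$ regardless of how regular the map is. The safe formulation is Theorem \ref{tnt}, which is an equivalence phrased in terms of sequences of $NT$-points accumulating at the origin: Thom regularity of $(f,g)$ forbids such sequences in $NT_{(f,g)}$, the inclusion of Theorem \ref{tct3} then forbids them in $NT_{f\bar g}$, and Theorem \ref{tnt} applied to $f\bar g$ concludes. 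This is a rephrasing of your argument, not a different proof.

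The genuine gap is in your last paragraph. That $(f,g)$ is an ICIS means the \emph{variety} $V_{(f,g)}$ has an isolated singularity, i.e.\ $\Sing (f,g)\cap V_{(f,g)}\subseteq\{0\}$; it does \emph{not} mean $\Sing (f,g)\subseteq\{0\}$, which is what Theorem \ref{ttr} (isolated singularity of the \emph{map}) requires. For instance $(f,g)=(x,\,y^2+z^2)$ on $\bC^3$ is an ICIS, yet $\Sing (f,g)$ is the entire $x$-axis and $\Disc(f,g)$ is a line. In fact, if one had $\Sing(f,g)\subseteq\{0\}$ then $\Disc(f,g)=\{0\}$, so the corollary's standing hypothesis that the discriminant contains curves tangent to the coordinate axes would be vacuous; the ICIS cases the corollary is aimed at are precisely those with positive-dimensional $\Sing(f,g)$, and there Theorem \ref{ttr} does not apply as stated. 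The conclusion you want (ICIS implies $(f,g)$ Thom regular) is true, and the repair is cheap: stratify $V_{(f,g)}$ as $\{0\}\cup\bigl(V_{(f,g)}\setminus\{0\}\bigr)$. The Thom condition at the stratum $\{0\}$ is trivial, and at a point $p\in V_{(f,g)}\setminus\{0\}$ the ICIS hypothesis makes $p$ a regular point of the map, so the implicit-function-theorem argument in the proof of Theorem \ref{ttr} applies verbatim; moreover sequences coming from strata contained in $\Sing(f,g)\setminus V_{(f,g)}$ cannot accumulate on $V_{(f,g)}\setminus\{0\}$, because $\Sing(f,g)$ is closed and meets $V_{(f,g)}$ only at the origin. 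In short, what the ICIS step needs is the Thom-regularity analogue of the hypothesis in Proposition \ref{topp1}, namely $\Sing G\cap V_G\subseteq\{0\}$, rather than the strictly stronger hypothesis $\Sing G\subseteq\{0\}$ of Theorem \ref{ttr}.
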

		
		\vspace{0.3cm}
		
		We can use the set $ NT_G $ to show that a map $ G:=(G_1, \ldots, G_p) $ is not Thom regular in $ V_G $. Note that a point $ p_0$ belongs to the set $ B_{\e}^{m}\cap NT_G$, for sufficiently small $ \e> 0 $, if and only if \textbf{for every} Whitney stratification $ \mathbb{W}_1=\left\lbrace B^{m}_{\e} \m V_G , W_\alpha \right\rbrace_\alpha $ of $ B^{m}_{\e}$, where $ \{W_\alpha \}_\alpha $ forms a Whitney stratification of $ V_G $, and for any positive-dimensional stratum $ W_{p_{0}} $ of $ \mathbb{W}_1 $ with $ p_{0} \in W_{p_{0}}\subset \Sing G $, there exists an analytic curve $ \gamma_{p_{0}} \subset B^{m}_{\e} \m V_{G}$ such that $ \gamma_{p_{0}} \to p_0 $ and \[\dfrac{n_{c(t)}(\gamma_{p_{0}}(t))}{\|n_{c(t)}(\gamma_{p_{0}}(t))\|} \to v_0 \notin [T_{p_{0}} W_{p_{0}}]^{\perp} \] where $n_{c(t)}(\gamma_{p_{0}}(t)) = c_1(t)\nabla G_1(\gamma_{p_{0}}(t))+ \cdots +c_p(t)\nabla G_p(\gamma_{p_{0}}(t))$ is a normal vector to the fiber of $G$ at the point $\gamma_{p_ {0}} (t)$, with $c(t)=(c_1(t),\ldots, c_p(t)) \in \mathbb{R}^p \setminus \{0\}$ and $v_0 \neq 0$. In fact, since $\Disc G = \{0\}$, we have $ \ker\dd_{\gamma_{p_{0}}(t)}(G_{|_{B^m_\epsilon \setminus V_G}}) =\textrm{T}_{\gamma_{p_{0}}(t)}G^{-1}(G(\gamma_{p_{0}}(t)))  $. If we denote $ \cT:= \lim \left( \textrm{T}_{\gamma_{p_{0}}(t)}G^{-1}(G(\gamma_{p_{0}}(t)))\right)   $, then $ \left( n_{c(t)}(\gamma_{p_{0}}(t))/\|n_{c(t)}(\gamma_{p_{0}}(t))\|\right)  \to v_0 $ implies that $ v_0 \in \cT^{\perp}  $. Thus, $ \cT^{\perp} \not \subset [T_{p_{0}} W_{p_{0}}]^{\perp}  $. Consequently, $ T_{p_{0}} W_{p_{0}} \not \subset \cT $. In other words, $\left(B^m_\epsilon \setminus V_G, W_{p_{0}}\right)$ does \textbf{not} satisfy the Thom condition $(\aa_G)$-regularity at $p_0$. Since $\mathbb{W}_1$ and $W_{p_{0}}$ were taken arbitrarily, we conclude that $p_0 \in B_{\epsilon}^{m}\cap NT_G$.

		\vspace{0.3cm}
		
		In the case of germs of maps, we are only interested in the behavior of the map in small neighborhoods of the origin. Therefore, the following theorem is an immediate consequence of the above discussion.
		
		\vspace{0.3cm}
		
		\begin{theorem}\label{tnt}
			A germ of an analytic map $ G: (\mathbb{R}^m, 0) \to (\mathbb{R}^p, 0) $, with $ m>p\ge 2 $, such that $ \Disc G = \{0 \} $, is not Thom regular at $ V_G $ if and only if there exist $ \epsilon>0 $ sufficiently small and a sequence of points $ \{p_n\} \subset B_{\epsilon}^{m}\cap NT_G $ such that $ p_n \to 0 $.
		\end{theorem}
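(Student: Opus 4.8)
The plan is to recast the theorem as a statement about whether the set $NT_G$ accumulates at the origin, and then to read both implications off by contraposition. Concretely, I would establish the auxiliary equivalence
\[
G \text{ is Thom regular at } V_G \iff \text{there exists } \epsilon>0 \text{ with } B_\epsilon^m\cap NT_G\subseteq\{0\},
\]
whose negation is exactly the assertion of the theorem: $G$ fails to be Thom regular precisely when, for every small $\epsilon$, the set $B_\epsilon^m\cap NT_G$ contains a point other than the origin, which is the same as the existence of a sequence $\{p_n\}\subset B_\epsilon^m\cap NT_G$ accumulating at $0$ through distinct nonzero terms.

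For the implication that non-Thom-regularity produces such a sequence, I would invoke only the fact already recorded above, namely that $B_\epsilon^m\cap NT_G=\emptyset$ forces Thom regularity of $G$. Since this emptiness is monotone in the radius (if it holds for $\epsilon$ it holds for every smaller radius), its contrapositive reads: if $G$ is not Thom regular, then $B_\epsilon^m\cap NT_G\neq\emptyset$ for all sufficiently small $\epsilon>0$. Choosing $p_n\in B_{1/n}^m\cap NT_G$ for large $n$ then yields a sequence with $p_n\to 0$.

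The substantive implication is the converse, that the existence of the sequence prevents Thom regularity; equivalently, that Thom regularity keeps $NT_G$ away from the origin. Here I would argue by contradiction: if $G$ were Thom regular, fix a Thom $(\aa_G)$-regular Whitney stratification $\mathbb{W}=\{W_\alpha\}$ of $V_G$ on some ball $B_\delta^m$. Any point $p\in NT_G\cap B_\delta^m$ lying in a positive-dimensional stratum of $\mathbb{W}$ would be witnessed by $\mathbb{W}$ itself to lie in a positive-dimensional stratum of a Thom regular stratification, contradicting $p\in NT_G$; hence every such $p$ lies in a $0$-dimensional stratum, that is, is an isolated point-stratum. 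Local finiteness of the Whitney stratification $\mathbb{W}$ then yields a smaller ball $B_{\delta'}^m$ meeting only finitely many strata, so $NT_G\cap B_{\delta'}^m$ is finite. But the hypothesized sequence supplies infinitely many distinct nonzero points of $NT_G$ inside $B_{\delta'}^m$, hence infinitely many distinct point-strata accumulating at $0$, contradicting local finiteness.

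I expect this reverse implication to be the only real obstacle, since it is the sole place where the pointwise, stratification-dependent definition of $NT_G$ must be turned into a global obstruction; the mechanism that makes it work is the local finiteness of Whitney stratifications together with the observation that a $0$-dimensional stratum is a single point. A secondary bookkeeping point is the status of the origin itself: I would handle the harmless possibility $0\in NT_G$ by insisting that the witnessing sequence accumulate at $0$ through distinct nonzero terms, which is automatic in the non-regular case, because $B_\epsilon^m\cap NT_G=\{0\}$ for some $\epsilon$ would, by the auxiliary equivalence, force Thom regularity.
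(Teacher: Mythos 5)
Your proposal is correct in substance, and it supplies precisely what the paper omits: the paper offers no proof beyond declaring the theorem ``an immediate consequence of the above discussion,'' where that discussion records the one fact you also invoke, namely that $B_{\epsilon}^{m}\cap NT_G=\emptyset$ for some small $\epsilon$ forces Thom regularity (this is genuinely immediate, since $0\notin NT_G$ already hands you a Thom $(\aa_G)$-regular stratification by the very definition of $NT_G$). The real content, which you correctly identify and prove, is the reverse mechanism: a Thom $(\aa_G)$-regular stratification $\mathbb{W}$ near the origin must place every point of $NT_G$ in a zero-dimensional stratum, and local finiteness of $\mathbb{W}$ then caps the number of such points in a small ball. (One caveat: if strata are not required to be connected, a single zero-dimensional stratum could a priori be an infinite discrete set accumulating at $0$; Whitney condition (b) for the pair consisting of that stratum and $\{0\}$ rules this out, so your argument survives either convention.) You also repair a genuine imprecision in the statement: $0\in NT_G$ is compatible with Thom regularity --- for instance in Example \ref{ex2}, where $G$ is Thom regular by Theorem \ref{ttr} but $V_G$ contains no smooth positive-dimensional submanifold through the origin, so no stratification can put $0$ in a positive-dimensional stratum --- hence the constant sequence $p_n\equiv 0$ must indeed be excluded, a point the paper glosses over.

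The only flaw is a small circularity in how you exclude that constant sequence. You justify ``nonzero terms are automatic in the non-regular case'' by citing the auxiliary equivalence in the direction $B_{\epsilon}^{m}\cap NT_G=\{0\}\Rightarrow$ Thom regularity; but you grounded only the case $B_{\epsilon}^{m}\cap NT_G=\emptyset$ (the recorded fact), and your local-finiteness argument proves the opposite direction of the equivalence. The missing case is true and takes one line from the definition of $NT_G$: if some point of $\left(V_G\cap B_{\epsilon}^{m}\right)\setminus\{0\}$ lies outside $NT_G$, then by definition a Thom $(\aa_G)$-regular stratification of $V_G$ exists, so $G$ is Thom regular; if there is no such point, then $V_G=\{0\}$ as a germ and the stratification whose only stratum in $V_G$ is $\{0\}$ is vacuously Thom regular. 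Alternatively, and more directly for your forward implication: if $G$ is not Thom regular then no Thom regular stratification exists at all, so $NT_G=V_G$ vacuously; since $V_G=\{0\}$ would force Thom regularity, $V_G\setminus\{0\}$ accumulates at the origin and itself provides the required sequence of nonzero points. With that line inserted, your plan is a complete and correct proof.
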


		\begin{example} Consider the map $ G := (G_1, G_2) $ as in Example \ref{eNT1}, that is, with $ G_1(x, y, z) = x $ and $ G_2(x, y, z) = y(x^2 + y^2) + xz^2 $. Another way to prove that $ G $ is not Thom regular at $ V_G $ is by applying Theorem \ref{tnt}. Let us consider once again $ \mathbb{W} = {W_i} $ as a Whitney stratification of $ V_G $ and $ p = (0, 0, z) \in W_i $, for some stratum of positive dimension $ W_i $ with the origin in its closure. For each natural number $k$, consider $\gamma_{k,z} = \left(\frac{1}{k}, 0, z\right) \in B^{3}_{\epsilon} \setminus V_G$ and $c_{k,z} = (-kz^2, k) \in \mathbb{R}^2$.
			
			Since $\nabla G_1(x,y,z) = (1,0,0)$ and $\nabla G_2(x,y,z) = (2xy+z^2,x^2+3y^2, 2xz)$, we have $n_{c_{k,z}}(\gamma_{k,z})= (-kz^2,0,0)+(kz^2,\frac{k}{k^2},\frac{2zk}{k})$. In other words, $n_{c_{k,z}}=(0,\frac{1}{k},2z)$ is a normal direction to the fibers $G^{-1}(G(\gamma_{k,z}))$. Note that $\|n_{c_{k,z}}(\gamma_{k,z})\| = \sqrt{4z^2+\frac{1}{k^2}}$. Therefore, when $k \to \infty$, we have $$\dfrac{n_{c_{k,z}}(\gamma_{k,z})}{\|n_{c_{k,z}}(\gamma_{k,z})\|} \to (0,0,\pm1) \notin   \left( \cT_{p}W_i\right)^{\perp},$$ where again the plus or minus sign depends on the sign of $z$.\footnote{
				Now, since $\langle (0,0,\pm 1), (0, \pm 1, 0) \rangle = 0$, we have $(0,0,\pm 1) \in \cT^{\perp}$. In other words, $\cT^{\perp} \not \subset \left(\cT_{p}W_i\right)^{\perp}$. Consequently, $\textrm{T}_{p}W_i \not \subset \cT$, as previously shown in Example \ref{eNT1}.} Consequently, $p \in B_{\epsilon}^{3}\cap NT_G$. Once $p$ was taken arbitrarily, we can apply Theorem \ref{tnt} to conclude that $G$ is not Thom regular at $V_G$.

		\end{example}
		
		\vspace{0.3cm}
		
		In the paper \cite{ACT}, the authors proved a result of the \textit{Thom-Sebastiani} type to generate new examples of maps with classical Thom regularity from known examples with this property. Below, we present our adaptation of the statement.
		
		\vspace{0.3cm}
		
		\begin{proposition}\cite[Proposition 5.2]{ACT}\label{act5.2} Let $f:(\mathbb{R}^{m},0)\to (\mathbb{R}^{p},0)$ and $g:(\mathbb{R}^{n},0)\to (\mathbb{R}^{p},0)$ be germs of analytic maps in separable variables, such that both $V_f$ and $V_g$ have codimension $p$, $\Disc f = \{0\}$, and $\Disc g = \{0\}$. Suppose that $f$ and $g$ have classical Thom regularity. Then the map $G:=f+g:(\mathbb{R}^{m}\times \mathbb{R}^{n},0)\to (\mathbb{R}^{p},0)$ has classical Thom regularity.
		\end{proposition}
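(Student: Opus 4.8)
The plan is to build a Thom $(\aa_G)$-regular stratification of $V_G$ out of the given ones for $f$ and $g$, exploiting the product form of the gradients forced by separability. First I would recall the structural facts established just before the statement: from $\Disc f=\Disc g=\{0\}$ we get $\Sing f\subset V_f$, $\Sing g\subset V_g$, and $\Sing G\subset \Sing f\times\Sing g\subset V_f\times V_g$, while $\Disc G=\{0\}$; hence off $V_G$ the map $G$ is a submersion and $\ker\dd_{q}(G_{|_{B\setminus V_G}})=T_qG^{-1}(G(q))$. I would also use the rigidity on $V_G$: there $f(x)=-g(y)$, so $x\in V_f\iff y\in V_g$; consequently $V_G\setminus(V_f\times V_g)$ contains only regular points of $G$, and the whole of $\Sing G$ is concentrated in $V_f\times V_g$.

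Let $\{A_i\}$ and $\{B_j\}$ be Whitney stratifications of $V_f$ and $V_g$ realising classical Thom regularity. The products $\{A_i\times B_j\}$ Whitney-stratify $V_f\times V_g$, and I would extend this to a Whitney stratification $\mathbb{W}$ of $V_G$ compatible with the closed analytic subset $V_f\times V_g$ (refining $\{A_i\times B_j\}$ on it and stratifying the smooth manifold $V_G\setminus(V_f\times V_g)$ separately), so that $\{B^{m+n}_\epsilon\setminus V_G\}\cup\mathbb{W}$ is Whitney; the codimension-$p$ hypotheses on $V_f,V_g$ ensure the dimensions fit and that $V_f\times V_g$ has positive codimension inside $V_G$. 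Since every stratum meeting $\Sing G$ lies inside some product stratum $A_i\times B_j$, it suffices to verify the Thom $(\aa_G)$ condition of Definition \ref{defGL2} for pairs $(B^{m+n}_\epsilon\setminus V_G,\,W)$ with $W\subset A_i\times B_j$ at points $q_0=(x_0,y_0)\in\Sing G$, the condition being automatic at regular points of $G$.

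Fix such a $q_0$ in a stratum $W\subset A_i\times B_j$ and a sequence $q_n=(x_n,y_n)\to q_0$ with $q_n\notin V_G$. Every unit normal direction to the fibre at $q_n$ has the form
\[
v_n=\frac{1}{\|N_n\|}\Big(\sum_{k}c_k(n)\nabla f_k(x_n),\ \sum_{k}c_k(n)\nabla g_k(y_n)\Big)=\frac{1}{\|N_n\|}\big(n^f(x_n),n^g(y_n)\big),
\]
with the \emph{same} coefficients $c(n)$ in both blocks, precisely because the variables are separable. After passing to a subsequence I may assume $v_n\to v_0=(v^f,v^g)$ and $\operatorname{span}\{\nabla G_k(q_n)\}\to\mathcal N_G$, so that $\lim T_{q_n}G^{-1}(G(q_n))=\mathcal N_G^{\perp}$ and $v_0\in\mathcal N_G$. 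As $T_{q_0}W\subset T_{x_0}A_i\times T_{y_0}B_j$, it is enough to prove $v^f\perp T_{x_0}A_i$ and $v^g\perp T_{y_0}B_j$ for every such limit. If the $f$-block vanishes in the limit this is trivial; otherwise I renormalise it and split on whether $x_n\in V_f$. When $x_n\notin V_f$, $n^f(x_n)$ is a genuine normal direction to $f^{-1}(f(x_n))$ and the Thom $(\aa_f)$-regularity of $f$ puts its limit into $\lim\operatorname{span}\{\nabla f_k(x_n)\}$, which is $\perp T_{x_0}A_i$. When $x_n\in V_f$ (so $y_n\notin V_g$, by the rigidity above), letting $A_{i''}\ni x_n$ one has $T_{x_n}A_{i''}\subset\ker\dd f(x_n)=(\operatorname{span}\{\nabla f_k(x_n)\})^{\perp}$, hence $n^f(x_n)\perp T_{x_n}A_{i''}$, and Whitney (a) for $\{A_i\}$ (giving $\lim T_{x_n}A_{i''}\supset T_{x_0}A_i$) yields $v^f\perp T_{x_0}A_i$ in the limit; the $g$-block is symmetric. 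Thus $v_0\perp T_{x_0}A_i\times T_{y_0}B_j\supset T_{q_0}W$, and as $v_0$ ranges over unit vectors of $\mathcal N_G$ this gives $\mathcal N_G\perp T_{q_0}W$, i.e.\ $T_{q_0}W\subset\mathcal N_G^{\perp}$, which is Thom $(\aa_G)$-regularity.

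The delicate bookkeeping is the control of the ratios $\|n^f(x_n)\|/\|N_n\|$ and $\|n^g(y_n)\|/\|N_n\|$, so that renormalising the nonzero block is legitimate and the degenerate sub-cases are disposed of; this is routine once organised, as is the existence of a compatible Whitney refinement in the first step. The conceptual heart, and the step I expect to be the real obstacle, is the case $x_n\in V_f$ (respectively $y_n\in V_g$): there $n^f(x_n)$ is not governed by Thom $(\aa_f)$-regularity, which only constrains sequences off $V_f$, so one must extract the needed orthogonality purely from Whitney (a) of the stratification of $V_f$ together with the inclusion $T(\text{stratum})\subset\ker\dd f$. Making this degenerate block cooperate with the shared coefficients $c(n)$, and checking that the two mechanisms (Thom for one factor, Whitney for the other) run simultaneously on the same subsequence, is where the argument most needs care.
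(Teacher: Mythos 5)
Your proposal is correct and is essentially the paper's own argument: the same product stratification built from Thom stratifications of $V_f$ and $V_g$, the same case split on whether the approaching sequence has $x_n\in V_f$ (resp.\ $y_n\in V_g$), Thom $(\aa_f)$- resp.\ $(\aa_g)$-regularity handling the factor off the zero locus, and Whitney (a) together with $T_{x_n}(\mathrm{stratum})\subset\ker \dd f(x_n)$ handling the factor inside it. The only real difference is dual bookkeeping: you track the shared-coefficient normal vectors $\bigl(\sum_k c_k\nabla f_k,\ \sum_k c_k\nabla g_k\bigr)$ and their orthogonality to limits of stratum tangents, whereas the paper proves the equivalent tangent-space inclusions $T_{x_n}f^{-1}(f(x_n))\times T_{y_n}g^{-1}(g(y_n))\subseteq T_{q_n}G^{-1}(G(q_n))$ (and, in the degenerate case, with the stratum through $x_n$ in place of the $f$-fibre) and passes to limits there.
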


		\begin{proof}		
			Initially, let us show that $ \Disc G = {0} $. Indeed, if we write $x=(x_{1}, \ldots, x_{n})$ and $y=(y_{1}, \ldots, y_{n})$, we have that
			$$JG_{(x,y)}=\left[ \begin{array}{cc}
				Jf(x)&Jg(y)\\
			\end{array} \right].$$ If $(x,y) \in \Sing G$, then there exists $ i \in \{1,\ldots, p\}$ such that
			\begin{eqnarray*}
				\nabla G_{i}(x,y) = \dsum_{i \neq j =1}^{p} \lambda_{j}(x,y) \nabla G_{j}(x,y) =\dsum_{i \neq j =1}^{p} \lambda_{j}(x,y) (\nabla f_{j}(x), \nabla g_{j}(x)).
			\end{eqnarray*}
			Thus, 
			$$
			\nabla f_{j}(x) =\dsum_{i \neq j=1}^{p} \lambda_{j}(x,y) (\nabla f_{j}(x))
			$$
			and
			$$
			\nabla g_{j}(y) =\dsum_{i \neq j=1}^{p} \lambda_{j}(x,y) (\nabla g_{j}(y)).
			$$
			Hence, $x \in \Sing\,f$ e $y \in \Sing \,g$, i.e, $ (x,y) \in \Sing\, f \times \Sing\,g$.
			Therefore, $\Sing G \subseteq \Sing f \times \Sing g \subset V_{f} \times V_{g}$. On the other hand, if $(x,y) \in V_{f} \times V_{g}$, then $f(x)=g(y)=0$ and $G(x,y) = f(x)+g(y) = 0$, which means $(x,y) \in G^{-1}(0)$. Hence, $V_{f} \times V_{g} \subseteq V_G$, implying that $G$ has isolated critical values.
			
			\vspace{0.2cm}
			
			Let $\bW_1$ and $\bW_2$ be stratifications of $V_{f}$ and $V_{g}$, respectively, satisfying the Thom regularity condition. We will show that the product stratification $\bW_1 \times \bW_2$ also satisfies Thom $(a_{G})$-regularity. Indeed, consider strata $W_1 \in \bW_1$ and $W_2 \in \bW_2$ chosen such that at least one of them has a positive dimension. Let $(\alpha, \beta) \in W_1 \times W_2$ be an arbitrary point, and consider a sequence of points $(x_{i}, y_{i}) \in \R^{m} \times \R^{n}$ that converges to $(\alpha, \beta)$ and such that $G(x_{i}, y_{i}) = f(x_{i}) + g(y_{i}) \neq 0$ for all $i>0$ (i.e., $(x_{i}, y_{i}) \not\in V_G$). Now, consider the limit
			$$
			T:=\dlim_{(x_{i}, y_{i}) \to (\alpha,\beta)}T_{(x_{i}, y_{i})}G^{-1}(G(x_{i}, y_{i}))
			$$
			of tangent spaces of the fibers of $G$ along the sequence $(x_{i}, y_{i})$. We claim that $$T_{(\alpha,\beta)}(W_1 \times W_2) \subseteq T.$$
			
			\begin{itemize}
				\item \textbf{Case 1.} $f(x_{i}) \neq 0$ e $g(y_{i}) \neq 0$, $ \forall i >>1$. We claim that $T_{x_{i}}g^{-1}(g(x)) \times T_{y_{i}}g^{-1}(g(y_{i})) \subseteq T_{(x_{i}, y_{i})}G^{-1}(G(x_{i}, y_{i}))$. Indeed, let $(v_{i},w_{i}) \in T_{x_{i}}g^{-1}(g(x)) \times T_{y_{i}}g^{-1}(g(y_{i}))$. Thus there exist
				$\gamma_{i}(t) \in f^{-1}(f(x_{i}))$ and $\Gamma_{i}(t) \in g^{-1}(g(y_{i}))$ such that $\gamma(0)=x_{i}$, $\Gamma_{i}(0)=y_{i}$, $\gamma'_{i}(0)=v_{i}$ e $\Gamma'_{i}(0)=w_{i}$. Consider the curve $(\gamma_{i}(t),\Gamma_{i}(t))$. We have
				$$G(\gamma_{i}(t),\Gamma_{i}(t))=f(\gamma_{i}(t))+ g(\Gamma_{i}(t))=f(x_{i})+g(y_{i})=G(x_{i},y_{i}),$$ which implies that
				$(\gamma_{i}(t),\Gamma_{i}(t)) \in G^{-1}(G(x_{i},y_{i})) $,  so, $$ (v_{i},w_{i})=(\gamma'_{i}(0), \Gamma'_{i}(0)) \in 
				T_{(x_{i},y_{i})} G^{-1}(G(x_{i},y_{i})) .$$ Consequently, $ 
				T_{x_{i}}g^{-1}(g(x)) \times T_{y_{i}}g^{-1}(g(y_{i})) \subseteq T_{(x_{i}, y_{i})}G^{-1}(G(x_{i}, y_{i}))$.
				
				\item \textbf{Case 2.} $f(x_{i}) = 0$, $\forall  i >>1$. Since $G(x_{i}, y_{i}) \neq 0$, we have that $g(x_{i}) \neq 0$, for all $ i >> 1$. Besides,
				$x_{i} \in f^{-1}(0)=V_{f}$, for all $ i >>1$. Let $W_{x}$ be the stratum of $V_{f}$ that contains $x_{i}$. Let us show that $T_{x_{i}} W_{x_{i}} \times T_{y_{i}} g^{-1}(g(y_{i})) \subseteq T_{(x_{i}, y_{i})}G^{-1}(G(x_{i}, y_{i}))$.
				In fact, let $v \in T_{x_{i}}W_{x_{i}}$ be an arbitrary vector and let $\Delta(t)$ be a curve in $W_{x}$ with $\Delta(0)=x_{i}$ and $\Delta'(0)=v$.
				We have that  $f \circ \Delta(t)=f(\Delta(t))=0$ (because $\Delta(t) \in W_{x_{i}} \subset V_{f}$). Hence, one has
				\begin{eqnarray}\label{eqqumaestrela}
					0=\dfrac{d}{dt}_{|t=0} f \circ \Delta = d_{\Delta(0)} f(\Delta'(0))=d_{x_{i}}f(v)
				\end{eqnarray}
				Now, notice that $T_{(x_{i}, y_{i})}G^{-1}(G(x_{i}, y_{i})) = \textrm{Kern}\,d_{(x_{i}, y_{i})}G$. However, 
				$d_{(x_{i}, y_{i})}G: \R^{m} \times \R^{n} \to \R^{p}$, defined by $(v,w) \mapsto d_{(x_{i}, y_{i})} G(v,w)$ is such that
				$d_{(x_{i}, y_{i})} G(v,w) = d_{x_{i}} f(v) +d_{y_{i}}(w)$. Therefore, if 
				$v \in T_{x_{i}}W_{x_{i}}$, using \eqref{eqqumaestrela}, we have $d_{(x_{i}, y_{i})} G(v,w) = d_{x_{i}} f(v) +d_{y_{i}}(w)=0+d_{y_{i}}(w)=d_{y_{i}}(w)$.
				Since $w \in T_{y_{i}} g^{-1}(g(y_{i})) = \textrm{Ker}\,d_{y_{i}} g$, then $ d_{y_{i}} g(w)=0$. Hence, if
				$(v,w) \in T_{x_{i}} W_{x_{i}} \times T_{y_{i}} g^{-1}(g(y_{i}))$, one has $ d_{(x_{i}, y_{i})} G(v,w)=0 $, and then $ (v,w) \in \textrm{Kern}\,d_{(x_{i}, y_{i})} G= T_{(x_{i}, y_{i})}G^{-1}(G(x_{i}, y_{i}))$.
			\end{itemize}
			
			Now, since $f$ e $g$ are Thom regular, then
			$\dlim_{x_{i} \to \alpha} T_{x_{i}}f^{-1}(f(x))\supset T_{\alpha}\bW_1$ e $\dlim_{y_{i} \to \beta} T_{y_{i}}g^{-1}(g(y)) \supset T_{\beta}\bW_2$.
			Therefore, in any case, we have
			\begin{eqnarray*}
				T_{(\alpha, \beta)}(\bW_1 \times \bW_2 )&\equiv & T_{\alpha}\bW_1 \times T_{\beta}\bW_2\\
				&\subset & \dlim_{x_{i} \to \alpha} T_{x_{i}}f^{-1}(f(x)) \times 
				\dlim_{y_{i} \to \beta} T_{y_{i}}g^{-1}(g(y_{i}))\\
				&=& \dlim_{(x_{i},y_{i}) \to (\alpha,\beta)}\left(T_{x_{i}}f^{-1}(f(x)) \times T_{y_{i}}g^{-1}(g(y_{i})\right)\\
				&\subseteq &\dlim_{(x_{i},y_{i}) \to (\alpha,\beta)} G^{-1}(G(x_{i},y_{i}))=T
			\end{eqnarray*}
			
			Finally, we can refine the product stratification $\bW_1 \times \bW_2$ to a regular stratification satisfying the Whitney condition $(a)$, such that the singular set $\Sing G$ is the union of strata. By construction, this refinement is a Thom regular stratification of $V_{G}$. This completes the proof of the proposition.
		\end{proof}
		
		\vspace{0.3cm}
		
		The next result, which is a generalization of \cite[Theorem 2]{R}, shows that the converse of Proposition \ref{act5.2} is true under additional conditions.

		\vspace{0.3cm}

		\begin{theorem}\label{t2}
			Let $f:(\mathbb{R}^{m},0)\to (\mathbb{R}^{p},0)$ and $g:(\mathbb{R}^{n},0)\to (\mathbb{R}^{p},0)$ be germs of analytic maps in separable variables such that $\Disc f = \{0\}$, $\Disc g = \{0\}$, and $g$ is classically Thom regular. Suppose that the germ of the map $G:=f+g:(\mathbb{R}^{m}\times \mathbb{R}^{n},0)\to (\mathbb{R}^{p},0)$ satisfies $\Sing f \times \Sing g \subset \Sing G$. If $G$ is classically Thom regular, then $f$ is classically Thom regular.
		\end{theorem}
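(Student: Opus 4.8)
The plan is to prove the statement directly, by exhibiting a Thom $(\aa_G)$‑regular stratification of $V_f$ and transporting the Thom condition from $G$ back to $f$ through the slice $V_f\times\{0\}\subset V_G$. Two structural facts organize everything. First, by the separable‑variable discussion one has $\nabla G_j(x,y)=(\nabla f_j(x),\nabla g_j(y))$ and $\Sing G\subset \Sing f\times\Sing g$; combined with the hypothesis $\Sing f\times\Sing g\subset\Sing G$ this yields the equality $\Sing G=\Sing f\times\Sing g$, which is the tool that lets a singularity of $f$ be "seen" by $G$. Second, since we work with germs at the origin and $\Disc g=\{0\}$ (a nonempty germ) forces $\Sing g$ to accumulate at $0$, and a regular point of $g$ would have an entire neighborhood of regular points, necessarily $0\in\Sing g$. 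As regular points of $f$ are automatically Thom regular, it suffices to verify the $(\aa_f)$ condition at points $\alpha\in\Sing f$, and I shall probe each such $\alpha$ through the point $(\alpha,0)\in\Sing G$.

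First I would set up the stratifications. Fix a Thom $(\aa_g)$‑regular stratification of $V_g$ (it exists since $g$ is Thom regular) and a Thom $(\aa_G)$‑regular stratification $\bW$ of $V_G$ (it exists since $G$ is Thom regular). Because the classical Thom condition is about pairs $(B\setminus V_G,W)$ and passes to refinements (smaller strata have smaller tangent spaces, while the limits of fibre‑tangents are stratification‑independent), I may refine $\bW$ so that the analytic sets $V_f\times\{0\}$ and $\Sing f\times\{0\}$ are unions of strata. The map $\iota\colon x\mapsto (x,0)$ embeds $V_f$ diffeomorphically onto $V_f\times\{0\}\subset V_G$ (indeed $f(x)+g(0)=0$ for $x\in V_f$), so pulling back the strata of $\bW$ lying in $V_f\times\{0\}$ produces a Whitney stratification $\mathcal{S}$ of $V_f$. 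For each stratum $W$ one has $\iota(W)=W\times\{0\}\in\bW$, and for $\alpha\in\Sing f$ the equality $\Sing G=\Sing f\times\Sing g$ gives $W\times\{0\}\subset\Sing G$, so the Thom $(\aa_G)$ condition is nontrivial at $(\alpha,0)$.

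The core step is the comparison of limiting fibre‑tangents. Given $\alpha\in W\subset\Sing f$ and a sequence $x_n\to\alpha$ with $x_n\notin V_f$ and $T_{x_n}f^{-1}(f(x_n))\to\cT_f$, I consider $(x_n,0)\to(\alpha,0)$, which lies off $V_G$ since $f(x_n)\neq 0$. Writing the fibre‑tangent of $G$ as $\ker \dd_{(x_n,0)}G=\ker[\,\dd_{x_n}f\mid \dd_0 g\,]$ and passing to a limit $\cT_G$, the Thom $(\aa_G)$ condition for the pair $(\,\cdot\,,\,W\times\{0\})$ yields $T_\alpha W\times\{0\}=T_{(\alpha,0)}(W\times\{0\})\subset\cT_G$. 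When $\dd_0 g=0$ one computes immediately $\ker \dd_{(x_n,0)}G=\ker \dd_{x_n}f\times\bR^n$, hence $\cT_G=\cT_f\times\bR^n$, and projecting to the $x$‑factor gives $T_\alpha W\subset\cT_f$, i.e.\ the Thom $(\aa_f)$ condition. This already disposes of the case in which $g$ has no linear part at the origin.

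The \textbf{main obstacle} is the general case $\dd_0 g\neq 0$, say $r=\rank \dd_0 g\ (1\le r\le p-1)$. Then $\cT_G$ acquires $r$ extra directions that mix the $x$‑ and $y$‑coordinates, so $\cT_G\cap(\bR^m\times\{0\})$ may a priori exceed $\cT_f\times\{0\}$: writing a vector $(u,0)\in\cT_G$ with $u\in T_\alpha W$ and decomposing along $\ker \dd_{x_n}f$ and its orthogonal complement produces a residual component $u^{\perp}\in\cT_f^{\perp}\cap\ker \dd_\alpha f$ which need not vanish for purely linear‑algebraic reasons. The heart of the argument is to force $u^{\perp}=0$, and this is exactly where the two remaining hypotheses enter: the inclusion $W\times\{0\}\subset\Sing f\times\Sing g$ constrains $u$ to be tangent to $\Sing f$, while the Thom $(\aa_g)$‑regularity of $g$ controls the limiting behaviour of the fibres of $g$ near $0\in\Sing g$ and thereby bounds the $y$‑contribution responsible for $u^{\perp}$, via a curve‑selection / \L ojasiewicz‑type estimate in the spirit of the proof of \cite[Theorem 2]{R}. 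An equivalent and perhaps cleaner organization is contrapositive: show that $\alpha\in NT_f$ implies $(\alpha,0)\in NT_G$ by transporting a witnessing analytic curve $\gamma\subset B\setminus V_f$ to $(\gamma,0)\subset B\setminus V_G$ and comparing the limiting normal directions $n_{c(t)}$ (using $\nabla G_j=(\nabla f_j,\nabla g_j)$ to control the $y$‑component through $g$'s regularity), and then conclude by Theorem \ref{tnt}. I expect this last control of the extra directions to be the only genuinely delicate point; the stratification transfer and the limit comparison are routine once it is in place.
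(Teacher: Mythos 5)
Your proposal is not a complete proof: it closes the argument only in the special case $\dd_0 g=0$, and you yourself flag the general case as unresolved. The obstruction you identify is real, and it is the whole content of the theorem. When $\dd_0 g\neq 0$, the inclusion $T_\alpha W\times\{0\}\subset\cT_G$ does not yield $T_\alpha W\subset\cT_f$, because the limit $\cT_G$ of the spaces $\ker \dd_{(x_n,0)}G=\{(u,v)\,:\,\dd_{x_n}f(u)=-\dd_0 g(v)\}$ may meet $\bR^m\times\{0\}$ in a subspace strictly larger than $\cT_f\times\{0\}$: a vector $(u,0)\in\cT_G$ is a limit of vectors $(u_t,v_t)$ with $v_t\to 0$, which only forces $u\in\ker \dd_\alpha f$, not $u\in\cT_f$. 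Your proposed remedy --- ``a curve-selection / \L ojasiewicz-type estimate in the spirit of \cite[Theorem 2]{R}'' --- is a placeholder, not an argument, and the same gap reappears verbatim in your contrapositive sketch, where you again defer ``the control of the extra directions''. A further warning sign is that your slice argument through $\{y=0\}$ never uses two of the hypotheses (that $g$ is classically Thom regular, and that $\Sing f\times\Sing g\subset\Sing G$), whereas the paper's proof uses both in an essential way; the case $\dd_0 g=0$ you did complete is genuinely special in that it can dispense with them.

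The paper proves the contrapositive, and the device that closes the gap is a pairing different from your $(\gamma,0)$. Assuming $f$ is not Thom regular, it takes points $p_0\in NT_f$ accumulating at $0$, with witness curves $\gamma_{p_0}$ in the complement of $V_f$ whose normalized normal directions converge to some $v_0\notin[T_{p_0}W_\alpha^1]^\perp$; it then chooses points $q_0\in\Sing g$ accumulating at $0$ and sets $z_0=(p_0,q_0)$, which lies in $\Sing f\times\Sing g\subset\Sing G\cap V_G$ --- this is exactly where the product hypothesis enters, producing points of $V_G\cap\Sing G$ at which irregularity of $G$ can be exhibited. Along the product curve $\gamma_{z_0}=(\gamma_{p_0},\gamma_{q_0})$, the separability splitting $\nabla G_j=(\nabla f_j,\nabla g_j)$ makes the normal split as $(n_{\mu(t),f},n_{\mu(t),g})$, and the Thom regularity of $g$ is invoked precisely to control the second component: up to a subsequence, $n_{\mu(t),g}(\gamma_{q_0}(t))\to v_m\in[T_{q_0}W_\beta^2]^\perp$, where $W_\beta^2$ is the stratum of $V_g$ through $q_0$. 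The limiting normal is then $(v_0,v_m)$, and testing it against $(w_0,0)\in T_{z_0}(W_\alpha^1\times W_\beta^2)$, where $w_0\in T_{p_0}W_\alpha^1$ satisfies $\left\langle v_0,w_0\right\rangle\neq 0$, shows $z_0\in NT_G$; since $z_0\to 0$, Theorem \ref{tnt} gives that $G$ is not Thom regular, a contradiction. In short, you located the difficulty correctly and even named the right strategy (Theorem \ref{tnt} plus transported curves), but the step that makes it work --- pairing with singular points and strata of $g$ in the second factor and using $g$'s Thom regularity to pin down the $y$-component of the limiting normal --- is missing from your proposal.
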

		
		\begin{proof}

			Assume that $f$ is not Thom regular at $V_f$. 
			We know that $G$ has an isolated critical value, i.e., $\Disc G=\{0\}$, and by hypothesis, $$V_G \cap \Sing G = \Sing G = \Sing f \times \Sing g.$$
			
			Consider any Whitney stratification $ \mathbb{W}=\{W_\alpha\} $ of $ V_G$ such that its extension to a Whitney stratification of $\mathbb{R}^m\times \mathbb{R}^n$ provides, respectively, a Whitney stratification of $\mathbb{W}_1$ and $\mathbb{W}_2$ on $\mathbb{R}^m$ and $\mathbb{R}^n$, which are stratifications for  $V_f$ and $V_g$. 
			
			Since $f$ is not Thom regular at $V_f$,  there exist  points $ p_0 \in NT_f $ with $ p_0 \to 0 $, i.e., $p_0 \in V_f \cap \Sing f$ such that for any positive dimensional strata $ W_{\alpha}^{1} $ of $\mathbb{W}_1,$ $W_{\alpha}^{1} \subset \Sing f$ containing the points $p_0$, there exist  analytic curves  $\gamma_{p_{0}} \subset \mathbb{R}^n \m V_{f}$, such that $\gamma_{p_{0}} \to p_0$ and  
			\[n_{\mu(t),f}(\gamma_{p_{0}}(t)) \to v_0 \notin [T_{p_{0}} W_{\alpha}^{1}]^{\perp} \]
			up to normalization, with $v_0 \neq 0$ and $\mu(t) \in \mathbb{R}^p \setminus \{0\}$.
			
			Now, for each $p_{0}$ one choose a point $ q_0\in \Sing g $ such that the sequence of points $q_{0} $  converge to  $0$. Thus, the pair  $z_0:=(p_0,q_0) \in \Sing f \times \Sing g =V_G \cap \Sing G $ and $ z_0 \to 0 $. Let  $W_{\beta}^{2}$ be the stratum of $\mathbb{W}_2$ which contains $q_{0}$. 	
			
			One defines the path $ \gamma_{z_{0}}:=(\gamma_{p_{0}},\gamma_{q_{0}}) $ with $ \gamma_{z_{0}} \subseteq \left( \bR^m \times \bR^n\right)  \m V_G $ and $  \gamma_{q_{0}} \to q_{0}$ which implies $\gamma_{z_{0}}\to z_0  $. One can assume without loss of generality that $ n_{\mu(t),g}(\gamma_{q_{0}}(t))$ converge for some $ v_m \in [T_{q_{0}} W_{\beta}^{2}]^{\perp}$. Hence, one has that 
			\[n_{\mu(t),G}(\gamma_{z_{0}}(t))= (n_{\mu(t),f}(\gamma_{p_{0}}(t)),n_{\mu(t),g}(\gamma_{q_{0}}(t))) \to (v_0 , v_m) \]

			Consider $W_{\alpha,\beta}:=W_{\alpha}^{1} \times W_{\beta}^{2} $. Hence, 
			\[T_{z_{0}}W_{\alpha, \beta}=T_{z_{0}}\left( W_{\alpha}^{1} \times W_{\beta}^{2}  \right) =T_{p_{0}} W_{\alpha}^{1} \times T_{q_{0}} W_{\beta}^{2}.\]
			Given any $ (w,0) \in T_{z_{0}}W_{\alpha,\beta}$ one has  $ \left\langle (v_0, v_m), (w,0)  \right\rangle = \left\langle v_0, w \right\rangle$. Since $v_0 \notin [T_{p_{0}}W_{\alpha}^{1}]^{\perp}$, there exists some $w_0 \in T_{p_{0}} W_{\alpha}^{1}$ such that $ \left\langle v_0, w_0 \right\rangle\neq 0$.  	Therefore,  $ (w_0,0) \in T_{z_{0}} W_{\alpha,\beta} $ satisfies $  \left\langle (v_0, v_m), (w_0,0) \right\rangle \neq 0 $ which implies 
			$ (v_0, v_m) \notin [T_{z_{0}} W_{\alpha,\beta}]^{\perp}$. Consequently, the sequence of points  $ z_0 $ belongs to $ NT_G $ and since it converges to $ 0 $,  $ G $ is not Thom regular at $ V_G $.
		\end{proof}
		
		\vspace{0.3cm}
		
		\begin{remark}
			Let $f:(\mathbb{R}^{m},0)\to (\mathbb{R}^{p},0)$ and $g:(\mathbb{R}^{n},0)\to (\mathbb{R}^{p},0)$ be germs of analytic maps in separable variables. Suppose that $f$ satisfies $\Sing f \subset \left\lbrace x\in \mathbb{R}^m \,|\, Jf(x) = 0 \right\rbrace$. In this case, we have $\Sing f \times \Sing g \subset \Sing G$. Indeed, given $(x,y) \in \Sing f \times \Sing g$, there exist $\lambda_j \in \mathbb{R}$ with $j=1,\ldots,p$, such that $\sum \lambda_j^2 \neq 0$ and $\sum \lambda_j \nabla g_j(x) = 0$. Therefore, $\sum \lambda_j \nabla G_j(x) = \left( \sum \lambda_j \nabla f_j(x), \sum \lambda_j \nabla g_j(y)\right) = \left(0,0\right)$. Hence, $(x,y) \in \Sing G$. The same statement holds if we replace $f$ with $g$.
		\end{remark}

		\begin{definition}
			Let $G:(\mathbb{R}^{m},0)\to (\mathbb{R}^{p},0)$ be a germ of an analytic map, and let $M$ be a connected component of positive dimension of $ \Sing G \cap V_G $ such that $0 \in \overline{M}$. We say that $M$ is \textit{$W_G$-invariant} if, for any Whitney stratification\footnote{Actually, as "any Whitney stratification" we mean "any reduced Whitney stratification" in the following sense. Reduced means: \emph{Let $ \mathbb{W}=\{W_\alpha\} $ be a Whitney stratification and suppose $ W_1 $ and $ W_2 $ are two strata such that $ W_2 \subset \overline{W_1} $ and $ W_1 \cup W_2 $ is also a smooth manifold so that two strata can be put one keeping regularity of Whitney stratification.}} of $ V_G $, $ M \setminus \{0\} $ is a unique stratum, i.e., $ M \setminus \{0\} $ does not contain any sub-stratum.
		\end{definition}
		
		The next proposition provides a new criterion for determining whether a map does not have classical Thom regularity.
		
		\begin{proposition}\label{p1}
			Let $G:(\mathbb{R}^{m},0)\to (\mathbb{R}^{p},0)$ be  an analytic map germ, and let $M$ be a connected component of positive dimension of $ \Sing G \cap V_G $ that is $ W_G $-invariant. If there exists a sequence of points $x_\nu$ in $M$ such that $x_\nu \to 0$ and for every sufficiently large $\nu$, there exists an analytic curve $\gamma_{x_\nu} \subset B^{2n}_{\e} \m V_{G}$, such that  $\gamma_{x_\nu} \to x_\nu$ and  
			\[n_{c(t)}(\gamma_{x_\nu}(t)) \to v_\nu \notin [T_{x_\nu} M]^{\perp}, \]
			with $ v_\nu \neq 0 $, $ c(t) \in \mathbb{R}^m \m \{0\} $ and $ n_{c(t)} $ is a normal vector to the fibers of $G$ at the points $\gamma_{x_\nu}(t)$, then $G$ does not have classical Thom regularity.
		\end{proposition}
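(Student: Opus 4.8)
The plan is to deduce the conclusion from Theorem \ref{tnt}, which asserts that $G$ (with $\Disc G = \{0\}$) fails classical Thom regularity precisely when there is a sequence $\{p_n\} \subset B_\e^m \cap NT_G$ with $p_n \to 0$. The obvious candidate sequence is the given $\{x_\nu\} \subset M$ itself, since it already converges to $0$; the whole task reduces to showing that $x_\nu \in NT_G$ for all sufficiently large $\nu$ (so that $x_\nu \in M \m \{0\}$).

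To verify membership in $NT_G$ I would invoke the explicit characterization worked out in the discussion preceding Theorem \ref{tnt}: a point $p_0$ lies in $B_\e^m \cap NT_G$ exactly when, for \emph{every} Whitney stratification $\{B_\e^m \m V_G, W_\alpha\}$ of $B_\e^m$ and \emph{every} positive-dimensional stratum $W_{p_0} \subset \Sing G$ containing $p_0$, there is an analytic curve $\gamma_{p_0} \subset B_\e^m \m V_G$ with $\gamma_{p_0} \to p_0$ and $n_{c(t)}(\gamma_{p_0}(t))/\|n_{c(t)}(\gamma_{p_0}(t))\| \to v_0 \notin [T_{p_0} W_{p_0}]^{\perp}$. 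The decisive point is that the universal quantifier over stratifications collapses: because $M$ is $W_G$-invariant, for any Whitney stratification of $V_G$ the set $M \m \{0\}$ is a single stratum admitting no proper sub-stratum. Hence, for each large $\nu$, the unique positive-dimensional stratum through $x_\nu$ is always $M \m \{0\}$, regardless of the stratification chosen, with tangent space $T_{x_\nu}(M \m \{0\}) = T_{x_\nu} M$; moreover $M \m \{0\} \subset \Sing G$ since $M \subset \Sing G \cap V_G$.

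With the stratum thus pinned down, the hypothesis supplies exactly the curve the characterization demands: by assumption there is an analytic $\gamma_{x_\nu} \subset B_\e^m \m V_G$ with $\gamma_{x_\nu} \to x_\nu$ and $n_{c(t)}(\gamma_{x_\nu}(t)) \to v_\nu \neq 0$, where $v_\nu \notin [T_{x_\nu} M]^{\perp}$. Since the limit is nonzero, normalizing yields $n_{c(t)}/\|n_{c(t)}\| \to v_\nu/\|v_\nu\|$, and as $[T_{x_\nu} M]^{\perp}$ is a linear subspace, $v_\nu/\|v_\nu\| \notin [T_{x_\nu} M]^{\perp}$ as well. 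This one curve therefore works for \emph{every} Whitney stratification — the relevant stratum being invariably $M \m \{0\}$ — so the characterization is met and $x_\nu \in B_\e^m \cap NT_G$.

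Finally, since $x_\nu \to 0$, Theorem \ref{tnt} gives at once that $G$ is not Thom regular at $V_G$, i.e. $G$ does not have classical Thom regularity. I expect the only genuinely delicate step to be the reduction furnished by $W_G$-invariance: one must argue carefully that the positive-dimensional stratum through each $x_\nu$ is \emph{forced} to be $M \m \{0\}$ in every admissible (reduced) stratification, so that a single hypothesis curve discharges the ``for every stratification'' clause in the definition of $NT_G$. Once this is established, the normalization and the appeal to Theorem \ref{tnt} are routine.
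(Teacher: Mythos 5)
Your proposal is correct and follows essentially the same route as the paper: the paper's own proof is the one-line observation that $W_G$-invariance forces $x_\nu \in NT_G$ for all large $\nu$, after which Theorem \ref{tnt} applies. You have simply made explicit the details the paper leaves implicit — the collapse of the quantifier over stratifications to the single stratum $M \setminus \{0\}$, the normalization of $v_\nu$, and the final appeal to Theorem \ref{tnt}.
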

		\begin{proof}
			Since $ M $ is $ W_G-$invariant one has that $ x_\nu \in NT_G $ for all big enough $ \nu $.
		\end{proof}

		\begin{lemma}\label{l1}
			Let $G:(\mathbb{R}^{m},0)\to (\mathbb{R}^{p},0)$ be an analytic map germ, and let $M$ be a $ 2 $-dimensional connected component    of $ \Sing G \cap V_G $ such that $0 \in \overline{M}$. Let $\gamma: \left[ 0,1\right] \to M$ be an analytic curve  with $\gamma \to 0$.  
			If for each $t \in (0,1)$, there exists a curve $\beta_t: [0,1] \to M$, with $\beta_t(0) = \beta_t(1) = \gamma(t)$, containing the origin in its interior, 
			then $M$ is $W_G$-invariant.
		\end{lemma}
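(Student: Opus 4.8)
The plan is to argue by contradiction after unwinding the definition of $W_G$-invariance. Let $\mathbb{W}$ be an arbitrary reduced Whitney stratification of $V_G$ and suppose that $M\setminus\{0\}$ is \emph{not} a single stratum. Since $M\setminus\{0\}$ is a connected smooth $2$-manifold which is a union of strata of $\mathbb{W}$, the strata of dimension $\le 1$ contained in it form a nonempty closed subanalytic subset $\Sigma\subset M\setminus\{0\}$ with $\dim\Sigma\le 1$; I will call $\Sigma$ the \emph{wall}. First I would exploit the reduced hypothesis to normalize $\Sigma$: an isolated $0$-dimensional stratum $\{x\}$ of $\Sigma$ is surrounded by a single $2$-stratum $S$, and since $S\cup\{x\}$ is again a smooth manifold it would be merged; likewise a $1$-dimensional stratum $C$ for which $(M\setminus\{0\})\setminus C$ stays connected (a ``radial'' arc) satisfies $S\cup C$ smooth and hence is merged as well. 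After these reductions any surviving component $C\subset\Sigma$ must genuinely \emph{separate} $M\setminus\{0\}$ into at least two pieces, and the whole point of the proof is to show that the hypothesis forbids the existence of such a separating wall.

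The topological heart of the argument uses the analytic curve $\gamma$ together with the family of loops $\beta_t$. Because $\gamma$ is analytic and any separating wall $C$ is a smooth arc bounded away from the origin (a wall accumulating at $0$ is radial, hence non-separating and already removed above), we have $\gamma\not\subset C$, and in fact $\gamma$ meets $C$ in a discrete set of parameters; I fix $t_0$ with $\gamma(t_0)\notin C$. In the punctured $2$-manifold $M\setminus\{0\}$ a separating wall must itself be a loop encircling the origin. The loop $\beta_{t_0}$, based at $\gamma(t_0)$ and containing the origin in its interior, realizes the generator of the local fundamental group of the punctured neighborhood of $0$ in $M$; running $t\to 0$, the loops $\beta_t$ through $\gamma(t)\to 0$ encircle the origin at \emph{every} scale. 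I would use this family to connect the two local sides of $C$ by a path that goes \emph{around} the origin and stays in $(M\setminus\{0\})\setminus C$, which contradicts the assumption that $C$ separates two distinct top-dimensional strata. Equivalently, the simultaneous existence of loops about $0$ arbitrarily close to $0$ is incompatible with a separating loop $C$ at a fixed positive distance from $0$.

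Consequently no surviving wall exists, $\Sigma=\emptyset$, and $M\setminus\{0\}$ is a single stratum in every reduced Whitney stratification; that is, $M$ is $W_G$-invariant. The step I expect to be the main obstacle is exactly the separation analysis of the second paragraph: one must make precise, in the $2$-dimensional setting, that the only walls surviving the reduced merges are loops encircling the origin, and that the loops $\beta_t$ through the points $\gamma(t)\to 0$ at all scales are incompatible with such a wall. This is where the analyticity of $\gamma$ is essential (to force discrete intersection with $C$ and to reach $0$ from within $M$), and where the status of $0$ as a boundary point of $M$ with $0\in\overline{M}\setminus M$ must be handled carefully, since the loops are required to contain the origin in their interior while themselves lying in $M\setminus\{0\}$.
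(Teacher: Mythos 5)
Your approach diverges from the paper's, and its central step fails. The paper's proof is direct: the curve $\gamma \to 0$ together with the loops $\beta_t$ encircling the origin at every scale is read as saying that $M$ contains a topological $2$-disk around $0$ (equivalently, the link of $0$ in $M$ is a single circle); hence $M\setminus\{0\}$ is, near the origin, a connected punctured disk, and for a \emph{reduced} Whitney stratification such a set is a single stratum. You instead assume this local picture implicitly --- you treat $M\setminus\{0\}$ from the outset as a connected smooth surface in which any surviving wall must ``encircle the origin'' --- and then try to use the loops for a different purpose, namely to contradict the existence of a separating wall.

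That step is false. Loops about the origin at arbitrarily small scales are perfectly compatible with a separating loop at a fixed positive distance: stratify the plane $M=\mathbb{R}^2$ by the unit circle $C$, the punctured open unit disk, and the exterior. Every $\beta_t$ with $t$ small lies inside the disk and never meets $C$, and no path assembled from such loops connects the two sides of $C$ --- separation is decided by whether a path crosses $C$, and ``going around the origin'' never carries you across a closed curve that itself encircles the origin. What eliminates this concentric subdivision is not the family $\beta_t$ but reducedness (the three strata merge into the smooth surface $M\setminus\{0\}$). So the division of labor is the opposite of what you set up: reducedness kills artificial walls, while the hypotheses on $\gamma$ and $\beta_t$ supply the genuinely needed input --- the disk structure of $M$ at $0$, i.e.\ connectedness of the link of $0$ in $M$ --- which your argument never establishes and which is exactly what can fail (if that link were several arcs or several circles, $M\setminus\{0\}$ would break near $0$ into several sheets, necessarily distinct strata, and $M$ would not be $W_G$-invariant). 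A secondary issue: your merge reductions only check smoothness of the union, whereas the paper's notion of reduced only licenses merges that \emph{keep Whitney regularity}, including with respect to strata of $V_G$ outside $M$ that may accumulate on the merged set; that verification is absent from your first paragraph.
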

		\begin{proof}
			Since $\gamma \to 0$, $M$ contains a $2$-disk (up to homeomorphism),  $M \setminus \{0\}$ is a unique stratum.
		\end{proof}

		
		\begin{example}
			Let $f:(\mathbb{C}^n,0) \to (\mathbb{C},0)$ be a germ of a polar weighted mixed function. If $M \subset \Sing f \cap V_f$ is a complex line, then it is $W_f$-invariant. In fact, $M$ is a real plane, and the orbit of any fixed point $p_0 = (0,\ldots,z_j,\ldots,0) \in M \setminus \{0\}$ under the action of $S^1$ is contained in $M$. 
			The statement follows from Lemma \ref{l1}.
		\end{example}
		
		\vspace{0.2cm}


		\begin{remark}\label{r1}
			The technique used in Proposition \ref{p1} was originally presented in \cite{PT}  for the family $f_k(x,y,z)=(x+z^k)\bar{x}y$, with  $k>1$, and adapted to the more general mixed functions by Ribeiro in \cite{R}. In our notation, if $ M=\left\lbrace x=z=0\right\rbrace $, then it is $W_{f_{k}}-$invariant for all $ k>1 $, see \cite[section \S 5.1]{PT}. 
			
		\end{remark}

		\section{The Milnor condition (b)}

		\subsection{The Milnor Set}
		
		We begin by considering again $U \subset \bR^m$ an open set containing the origin $0$, and $\rho: U \to \bR_{\ge 0}$ a non-negative proper function defining the origin. 
		
		\vspace{0.2cm}
		
		\begin{definition} \index{Milnor set} \label{d:M}
			Let $G:(\bR^m, 0) \to (\bR^p,0)$ be a germ of analytic map. We denote by \[M_\rho(G):=\left\lbrace x \in U \mid \rho \not\pitchfork_x G \right\rbrace  \]
			
			\noindent the set of points in a neighborhood of the origin where $ G $ and $ \rho $ do not intersect transversely. The set $ M_\rho(G) $ is called the \textit{$\rho$-non-regular points} of $G$ or the \textit{Milnor set} of $G$. \index{Milnor set}
		\end{definition}
		
		\vspace{0.2cm}
		
		The following proposition provides a mechanism to compute the Milnor set of a map $G$.
		
		\vspace{0.2cm}
		
		\begin{proposition}\cite{Ha}\label{pm}
			Let $G:(\bR^m, 0) \to (\bR^p,0)$ be a germ of analytic map, and let $U \subset \bR^m$ be an open set containing the origin $0$. A point $x\in U$ belongs to the Milnor set $M_\rho(G)$ if and only if the vectors $\{\nabla \rho(x), \nabla G_1 (x),\ldots, \nabla G_p (x) \}$ are linearly dependent over $\bR$.
		\end{proposition}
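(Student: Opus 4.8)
The plan is to reformulate the transversality condition through the single auxiliary map $\Phi:=(\rho,G_1,\ldots,G_p):U\to\bR^{p+1}$, whose Jacobian matrix at $x$ has as its rows exactly the vectors $\nabla\rho(x),\nabla G_1(x),\ldots,\nabla G_p(x)$. The guiding principle is the equivalence
\[
\rho\pitchfork_x G \iff \Phi \text{ is a submersion at } x \iff \rank d\Phi_x=p+1,
\]
and the last condition is, by elementary linear algebra, exactly the linear independence of those $p+1$ gradient rows. The proposition is then obtained by negating both sides, so the only real content is the first equivalence, which I would establish by orthogonal-complement bookkeeping in $\bR^m$.

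First I would treat the regular situation and prove the implication ``independence $\Rightarrow$ transversality''. If the full collection $\{\nabla\rho(x),\nabla G_1(x),\ldots,\nabla G_p(x)\}$ is linearly independent, then $\nabla\rho(x)\neq 0$ and $\rank dG_x=p$, so near $x$ both $A:=\rho^{-1}(\rho(x))$ and $B:=G^{-1}(G(x))$ are smooth submanifolds with $T_xA=(\nabla\rho(x))^{\perp}$ and $T_xB=\bigcap_{i=1}^{p}(\nabla G_i(x))^{\perp}$. Using $(T_xA+T_xB)^{\perp}=T_xA^{\perp}\cap T_xB^{\perp}$, the normal spaces give
\[
(T_xA+T_xB)^{\perp}=\span\{\nabla\rho(x)\}\cap\span\{\nabla G_1(x),\ldots,\nabla G_p(x)\},
\]
and independence of the full collection forces this intersection to be $\{0\}$, whence $T_xA+T_xB=\bR^m$; that is, $\rho\pitchfork_x G$.

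For the reverse implication I would argue by contraposition and split off the degenerate loci, which is the delicate part. If $x\in\Sing G$ then $\{\nabla G_1(x),\ldots,\nabla G_p(x)\}$ is already dependent, so the full collection is dependent; correspondingly $B$ fails to be a manifold at $x$, $\Phi$ cannot be a submersion, and $x\in M_\rho(G)$. The symmetric remark covers a critical point of $\rho$, where $\nabla\rho(x)=0$ both forces dependence and places $x$ in $M_\rho(G)$. In the remaining case the $\nabla G_i(x)$ are independent and $\nabla\rho(x)\neq 0$, and a dependence relation among the full collection says precisely $\nabla\rho(x)\in\span\{\nabla G_1(x),\ldots,\nabla G_p(x)\}$; then the intersection displayed above equals $\span\{\nabla\rho(x)\}\neq\{0\}$, so $T_xA+T_xB\neq\bR^m$ and transversality fails, again giving $x\in M_\rho(G)$.

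The main obstacle is not any hard analysis but the honest bookkeeping of the non-manifold points: at $x\in\Sing G$, or at a critical point of $\rho$, the naive tangent-space definition of transversality is vacuous, and one must verify that the convention placing such $x$ in $M_\rho(G)$ is consistent with linear dependence of the gradients. Reading the transversality $\rho\pitchfork_x G$ of Definition \ref{d:M} as submersivity of $\Phi$, i.e. $\rank d\Phi_x=p+1$, unifies all cases, since then the whole proposition collapses to the single rank statement certified by the level-set computation above.
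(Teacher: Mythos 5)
Your proposal is correct and takes essentially the same route as the paper's own proof: both arguments identify the normal spaces of the fibers of $\rho$ and $G$ with ${\rm span}\{\nabla\rho(x)\}$ and ${\rm span}\{\nabla G_1(x),\ldots,\nabla G_p(x)\}$, translate transversality into these spans meeting only in $\{0\}$ (equivalently, joint linear independence of the $p+1$ gradients), and adopt the same convention that failure of submersivity of $\rho$ or of $G$ already places the point in $M_\rho(G)$ (the paper's ``If $x_0 \notin M_\rho(G)$, then $G$ is a submersion by definition''). The only differences are cosmetic --- you keep a general $\rho$ and argue via the duality $(T_xA+T_xB)^{\perp}=T_xA^{\perp}\cap T_xB^{\perp}$, while the paper fixes $\rho=\|x\|^2$ so that $x_0$ itself is the normal to the sphere and argues both directions by contradiction --- plus one harmless blemish on your side: the parenthetical claim that the fiber $B$ ``fails to be a manifold'' at a point of $\Sing G$ is not literally true in general, but your conclusion there rests on the submersivity convention rather than on that claim.
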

		
		\begin{proof}
			Note that if $x_0\in U$ such that $x_0=0$, then the map $\rho$ is not a submersion at $x_0$. Therefore, $x_0$ belongs to $M_\rho(G)$ if and only if $\{\nabla \rho(x_0), \nabla G_1 (x_0),\ldots, \nabla G_p (x_0) \}$ are linearly dependent over $\R$.
			
			We now assume that $x_0 \neq 0$. If $x_0 \notin M_\rho(G)$, then $G$ is a submersion by definition. Consequently, ${ \nabla G_1 (x_0),\ldots, \nabla G_p (x_0) }$ are linearly independent. Suppose that $x_0, \nabla G_1 (x_0),\ldots, \nabla G_p (x_0)$ are linearly dependent. Since the gradients generate the normal space to the fibers $G^{-1}(G(x_0))$, and $x_0$ is spanned by these gradients, $x_0$ is also normal to the fiber $G^{-1}(G(x_0))$. In other words, $x_0$ is normal to the tangent space $\textrm{T}_{x_0}, G^{-1}(G(x_0))$. On the other hand, we have assumed that $x_0 \notin M_\rho(G)$, so $ \textrm{T}_{x_0}\, G^{-1}((G(x_0)))\oplus \textrm{T}\,S^{m-1}_{\|x_0\|}=\R^m $. Thus, $x_0$ is normal to every vector in $\R^m$, which means $x_0 = 0$, leading to a contradiction. Therefore, $\{\nabla \rho(x_0), \nabla G_1 (x_0),\ldots, \nabla G_p (x_0) \}$ is linearly independent over $\R$.
			
			Conversely, assume that ${\nabla \rho(x_0), \nabla G_1 (x_0),\ldots, \nabla G_p (x_0) }$ are linearly independent over $\R$. In this case, $ G $ and $ \rho $ are both submersions at $ x_0 $. Since $\textrm{dim}\,\left( \textrm{T}\,S^{m-1}_{\|x_0\|}\right) = m-1 $, if $\textrm{T}_{x_0}\, G^{-1}((G(x_0)))$ and  $\textrm{T}\,S^{m-1}_{\|x_0\|}$ are not transverse, then $\textrm{T}_{x_0}\, G^{-1}((G(x_0))) \subset \textrm{T}\,S^{m-1}_{\|x_0\|}$ and consequently, $ \textrm{span}\{x_0\}= \left( \textrm{T}\,S^{m-1}_{\|x_0\|}\right) ^{\perp} \subset \left( \textrm{T}_{x_0}\, G^{-1}((G(x_0)))\right) ^{\perp} $, i.e, $$ x_0\in \textrm{span}\{\nabla G_1 (x_0),\ldots, \nabla G_p (x_0) \}  ,$$\noindent  leading to a contradiction. Therefore, $ G $ and $ \rho $ are transverse at $ x_0 $, i.e., $ x_0 \notin M(G) $.		
		\end{proof}
		
		\vspace{0.3cm}
		
		Consider an analytic germ $G:(\bR^m, 0) \to (\bR^p,0)$ and $U \subset \bR^m$ an open set containing the origin $0$. For each $x\in U$, define the matrix
		
		\begin{center}
			$A(x):= \left[ \begin{array}{c}
				\nabla G_{1}(x) \\ 
				\vdots \\ 
				\nabla G_{p}(x) \\ 
				\nabla \rho (x)
			\end{array}\right] $
		\end{center}
		
		\vspace{0.2cm}
		
		If the matrix $A(x)$ is square, then from Proposition \ref{pm}, it follows that $x\in M_\rho(G)$ if and only if $ \textrm{det}\left( A(x)\right) =0 $. More generally, the Lagrange identity guarantees that $$ M_\rho(G)=\{x\in U \,|\, \det\left( A(x)A(x)^T\right) =0\}, $$ where $ A(x)^T $ denotes the transpose of the matrix $ A(x) $.

		\vspace{0.2cm}

		\begin{example}\label{mfx1}
			Let $G:(\mathbb{R}^3,0) \to (\mathbb{R}^2,0)$ be given by $G(x,y,z)=(xy,xz)$, where $(x,y,z) \in \R^3$. To calculate the Milnor set $M(G)$, we consider the matrix $$A(x,y,z):=\left[ \begin{array}{ccc}
				y&x&0\\
				z&0&x\\
				x&y&z\\
			\end{array} \right]. $$
			
			The Milnor set $M(G)=\{(x,y,z)\in\mathbb{R}^3 \mid \det\left(A(x,y,z)\right) = 0 \}$.
			Consequently, $M(G) = \{x=0\} \cup\{x^2-y^2-z^2=0\}$.
		\end{example}
		
		\vspace{0.2cm}
		
		\vspace{0.2cm}

		\begin{example}\label{ex1}

			Consider again the map $ G:=(G_1,G_2,G_3,G_4):(\mathbb{R}^6,0) \to (\mathbb{R}^4,0) $ given by  
			
			$$ \left\lbrace \begin{array}{ccl}
				G_1(x,y,z,w,a,b)&=x^2z+y^2z &   \medskip\\
				G_2(x,y,z,w,a,b)&= wx^2+wy^2&  \medskip\\
				G_3(x,y,z,w,a,b)&= ax^2+ay^2&  \medskip\\
				G_4(x,y,z,w,a,b)&= bx^2+by^2&  \medskip\\
				
			\end{array}\right. $$
			
			We have 
			$$\left \lbrace \begin{array}{lcl}
				V_G & = & \{x^2+y^2=0\} \cup \{a^2+b^2+w^2+z^2=0\} \medskip\\
				\Sing G & =& \{x^2+y^2=0\}  \medskip\\
			\end{array}\right.$$
			
			Now, let us calculate the Milnor set of $ G $. Consider the matrix:
			
			{\[\displaystyle A(\textbf{v})= \left[ \begin {array}{cccccc} 2\,xz&2\,yz&{x}^{2}+{y}^{2}&0&0&0\\ \noalign{\medskip}2\,wx&2\,wy&0&{x}^{2}+{y}^{2}&0&0\\ \noalign{\medskip}2\,xa&2\,ya&0&0&{x}^{2}+{y}^{2}&0\\ \noalign{\medskip}2\,xb&2\,yb&0&0&0&{x}^{2}+{y}^{2}\\ \noalign{\medskip}x&y&z&w&a&b\end {array} \right] \]}
			\noindent
			where $ \textbf{v}:=(x,y,z,w,a,b) $. Now, after finding the square matrix,  $ A(\textbf{v})A(\textbf{v})^T $, we can calculate its determinant:
			{\[\displaystyle \det\left( A(\textbf{v})A(\textbf{v})^T\right) = \left( {x}^{2}+{y}^{2} \right) ^{7} \left( 2\,{a}^{2}+2\,{b}^{2}+2\,{w}^{2}-{x}^{2}-{y}^{2}+2\,{z}^{2} \right) ^{2}.\]} Therefore,   $ M(G) =  \{x^2+y^2=0\} \cup \{2a^2+2b^2+2w^2-x^2-y^2+2z^2=0\} $.
		\end{example}
		
		\vspace{0.2cm}
		
		Note that in the previous example, we have $\Sing G \subset M\left(G \right)$. The next result, which is an immediate consequence of Proposition \ref{pm}, shows that this condition is always true.
		
		\vspace{0.2cm}
		
		\begin{corollary}
			If $G:(\bR^m, 0) \to (\bR^p,0)$ is an analytic map germ then $M_\rho(G) =\Sing (G,\rho)$, where  $(G,\rho): U\to \bR^{p+1}$. In particular, $\Sing G \subset M_{\rho}\left(G \right).$
		\end{corollary}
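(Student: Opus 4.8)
The plan is to reduce the asserted equality to Proposition \ref{pm} by recognizing that the matrix $A(x)$ introduced just above is precisely the Jacobian matrix of the augmented map $(G,\rho)$. First I would observe that, writing $(G,\rho) = (G_1, \ldots, G_p, \rho): U \to \bR^{p+1}$, the differential $d_x(G,\rho)$ is represented by the $(p+1)\times m$ matrix whose rows are $\nabla G_1(x), \ldots, \nabla G_p(x), \nabla \rho(x)$, that is, exactly $A(x)$.

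Next I would spell out the singular locus in the relevant range $m \geq p+1$: the maximal rank of such a matrix is $p+1$, so by the definition of the singular set, $x \in \Sing(G,\rho)$ precisely when $\rank d_x(G,\rho) < p+1$, which is to say that the $p+1$ gradient rows $\{\nabla G_1(x), \ldots, \nabla G_p(x), \nabla \rho(x)\}$ are linearly dependent over $\bR$. But Proposition \ref{pm} characterizes membership in $M_\rho(G)$ by exactly this linear dependence. Comparing the two conditions yields $M_\rho(G) = \Sing(G,\rho)$.

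For the ``in particular'' clause I would argue by inclusion of the two linear-dependence conditions. If $x \in \Sing G$, then $\nabla G_1(x), \ldots, \nabla G_p(x)$ already fail to be linearly independent; adjoining $\nabla \rho(x)$ can only keep the enlarged family linearly dependent, so $x$ satisfies the criterion of Proposition \ref{pm} and hence lies in $M_\rho(G)$. Equivalently, $\Sing G \subseteq \Sing(G,\rho)$, and this last set equals $M_\rho(G)$ by the first part.

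The argument is essentially bookkeeping, so I do not anticipate a genuine obstacle; the only point demanding care is the convention for ``maximal rank''. One must record that in the Milnor fibration regime one has $m \geq p+1$, so that non-maximal rank of $d_x(G,\rho)$ is exactly the linear dependence of its $p+1$ rows, which is the precise hypothesis fed into Proposition \ref{pm}. Outside this regime the identification of ``maximal rank'' would need to be revisited, but that case does not arise here.
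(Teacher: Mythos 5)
Your proof is correct and takes essentially the same approach as the paper, which states this corollary without proof as an ``immediate consequence of Proposition \ref{pm}'': identifying $A(x)$ with the Jacobian of $(G,\rho)$, translating non-maximal rank into linear dependence of the $p+1$ gradient rows, and invoking the proposition is precisely the intended argument, with the ``in particular'' clause following from the monotonicity of linear dependence under adjoining $\nabla\rho(x)$. Your closing caveat that the rank translation needs $m \geq p+1$ is well taken and consistent with the paper's standing hypothesis $m > p$.
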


		\vspace{0.2cm}

		\begin{remark}
			The Milnor set for a mixed function is given by $$M(G)= \left\lbrace z\in \bC^n\,|\,\exists\,\lambda\in\bR,\mu\in\bC^{\ast}, \|\mu\|=1 \textrm{ s.t. } \lambda z = \mu\overline{\textrm{d}G}(z,\bar{z})+\bar{\mu}\bar{\textrm{d}}G(z,\bar{z} )\right\rbrace.$$
		\end{remark}

		\vspace{0.2cm}
		
		\subsection{Milnor Condition (b) and $ \rho $-regularity}
		
		\vspace{0.2cm}
		
		\begin{definition}
			We say that an analytic germ $G: (\bR^m,0) \to (\bR^p,0)$ satisfies the \textit{Milnor condition (b) at the origin} if:
			\begin{equation}\label{eq:mainclass}
				\overline{M(G) \setminus V_G} \cap V_G \subseteq \{0\},
			\end{equation}
			where the closure of the set $M(G) \setminus V_G$ is understood as a germ of a set at the origin.
		\end{definition}

		\vspace{0.2cm}

		
		As explained in \cite{R}, condition \eqref{eq:mainclass} is equivalent to the existence of a conical neighborhood of $ V_G \setminus \{0\} $ where there are no points from the Milnor set, except possibly those included in $ V_G $. Thus, this condition implies $ \rho $-regularity for $ G $. Conversely, the $ \rho $-regularity of $ G $ gives rise to a conical neighborhood of $ V_G \setminus \{0\} $ without non-regular points, and therefore, condition \eqref{eq:mainclass} holds.
		
		\vspace{0.2cm}

		\begin{example}\label{exaa} Consider again the map $  G:(\mathbb{R}^3,0) \to (\mathbb{R}^2,0) $ given by $ G(x,y,z)=(xy,xz)$  on the Example \ref{mfx1}. We know that 
			$$\left \lbrace \begin{array}{lcl}
				V_G & = & \{x=0\} \cup \{y=z=0\} \medskip\\
				M(G) & = & \{x=0\} \cup \{x^2-y^2-z^2=0\} \medskip
			\end{array}\right.$$ Let us show that $ G $  satisfies the condition  \eqref{eq:mainclass}. Indeed, let $p_0=(x_0,y_0,z_0)\in \overline{M(G)\m V_G}\cap V_G$. There exists a sequence  $p_n:=(x_n,y_n,z_n)\in M(G)\m V_G$ such that $p_n \to p_0$ and $x_{n}^{2}=y_{n}^{2}+z_{n}^{2}$ with $y_{n}^{2}+z_{n}^{2} \neq 0$. Since  $p_0\in V_G$, we need to consider two cases:
			
			\noindent
			\textbf{Case 1:} $p_0=(0,y_0,z_0)$. Then $$0=\lim x_{n}^{2}= \lim (y_{n}^{2}+z_{n}^{2})=y_{0}^{2}+z_{0}^{2}.$$
			
			\noindent
			\textbf{Case 2:} $p_0=(x_0,0,0)$. Then $$x_{0}^2=\lim x_{n}^{2}= \lim (y_{n}^{2}+z_{n}^{2})=y_{0}^{2}+z_{0}^{2} = 0.$$
			In any case, we have $p_0=0$, and $G$ satisfies condition \eqref{eq:mainclass}, as we wanted to show.
			
		\end{example}

		\vspace{0.2cm}
		
		\begin{example}\label{mhx1}
			Consider the map $G:(\mathbb{R}^3,0)\to (\mathbb{R}^2,0)$ given by $G(x,y,z)=(xy,z^2)$. We have

			\noindent $$\left \lbrace \begin{array}{ll}
				V_G =\{x=z=0\}\cup \{y=z=0\}\medskip\\
				M(G)= \{x=\pm y\}\cup\{z=0\} \medskip\\
			\end{array}\right.$$
			
			\vspace{0.2cm}
			
			
			\noindent
			We claim that $V_G \subset \overline{M(G)\m V_G}\cap V_G$. Indeed, write $V_G=V_1 \cup V_2$ where $V_1=\{x=z=0\}$ e $V_2=\{y=z=0\}$. Let $q_1=(0,y_1,0) \in V_1$ such that  $y_1 \neq 0$. Consider a sequence of points  $q_n:=(1/n,y_1,0)$. We have that $q_n \to q_1$ and $q_n \in M(G)\m V_G$. Thus, $q_1 \in \overline{M(G)\m V_G}\cap V_G$, i.e., $V_1\m\{0\} \subset  \overline{M(G)\m V_G}\cap V_G$. Analogously, one can shows that  $V_2\m\{0\} \subset \overline{M(G)\m V_G}\cap V_G$. Now, if $q_0=0$ one can consider the sequence $q_n:=(1/n,1/n,0)$. Hence, $V_G \subset  \overline{M(G)\m V_G}\cap V_G $ which implies that  $\overline{M(G)\m V_G}\cap V_G =V_G \supsetneq\{0\}$. Therefore, $G$ does not satisfy the condition \eqref{eq:mainclass}.
		\end{example}
		
		\vspace{0.2cm}

		\vspace{0.2cm}

		The following lemma shows that the Milnor condition (b) at the origin implies the $ \rho $-regularity of the map $ G $.
		
		\begin{lemma}\label{f2L3}
			Let $G:(\mathbb{R}^m,0)\to (\mathbb{R}^p,0)$ be an analytic germ with an isolated critical value. The map $G$ satisfies the Milnor condition (b) if and only if there exists $\e_0>0$ such that for any $0<\e <\e_0$, there exists $\eta$, $0<\eta \ll \e$, such that the restriction map \eqref{rest1}:
			\begin{equation*}
				G_{|}: S_{\e}^{m-1}\cap G^{-1}(B^{p}_{\eta} \setminus \{0\}) \to B^{p}_{\eta} \setminus \{0\}
			\end{equation*}
			is a smooth submersion.
		\end{lemma}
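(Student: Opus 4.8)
The plan is to reduce everything to a pointwise statement about the Milnor set and then treat the two implications separately. Fix $0<\e<\e_0$ and a point $x\in S_\e^{m-1}$ with $G(x)\neq 0$. Since $\Disc G=\{0\}$ forces $\Sing G\subseteq V_G$, such an $x$ is a regular point of $G$, so $d_xG$ is surjective and $\textrm{T}_xG^{-1}(G(x))=\ker d_xG$. Hence the map \eqref{rest1} is a submersion at $x$ exactly when $\textrm{T}_xG^{-1}(G(x))+\textrm{T}_xS_\e^{m-1}=\bR^m$, i.e. when $S_\e^{m-1}\pitchfork_x G^{-1}(G(x))$, which by Proposition \ref{pm} means $x\notin M(G)$. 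So I would first record the equivalence that the map \eqref{rest1} is a submersion if and only if
\[ S_\e^{m-1}\cap G^{-1}(B^p_\eta\setminus\{0\})\cap M(G)=\emptyset, \]
reducing the lemma to deciding when this punctured tube on the sphere meets the Milnor set.

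For the implication "Milnor condition (b) $\Rightarrow$ submersion" I would argue directly. Assuming $\overline{M(G)\setminus V_G}\cap V_G\subseteq\{0\}$, every point of $S_\e^{m-1}\cap V_G$ is nonzero and hence lies outside the closed set $\overline{M(G)\setminus V_G}$; as $S_\e^{m-1}\cap V_G$ is compact it sits at a positive distance from $\overline{M(G)\setminus V_G}$. I would then note that $S_\e^{m-1}\cap G^{-1}(\overline{B^p_\eta})$ collapses onto $S_\e^{m-1}\cap V_G$ as $\eta\to 0$ (otherwise a compactness argument on the sphere yields a limit point in $S_\e^{m-1}\cap V_G$ at positive distance from itself). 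Choosing $\eta$ small enough that this punctured tube falls inside the gap, it avoids $M(G)\setminus V_G$, and being disjoint from $V_G$ as well, it avoids $M(G)$ entirely; the equivalence above then gives the submersion. This is precisely the mechanism already exploited in Remark \ref{obs1}.

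For the converse I would pass through $\rho$-regularity. From a level $\e$ at which \eqref{rest1} is a submersion (with witness $\eta$), set $N_\e:=S_\e^{m-1}\cap G^{-1}(B^p_\eta)$, an open neighborhood of $S_\e^{m-1}\cap V_G$ in $S_\e^{m-1}$; then $N_\e\setminus V_G=S_\e^{m-1}\cap G^{-1}(B^p_\eta\setminus\{0\})$ contains no point of $M(G)$, so $S_\e^{m-1}\pitchfork_x G^{-1}(G(x))$ for every $x\in N_\e\setminus V_G$, which is exactly $\rho$-regularity in the sense of Definition \ref{d2}. I would then invoke the equivalence recalled just before the lemma (following \cite{R}): $\rho$-regularity is equivalent to the existence of a conical neighborhood of $V_G\setminus\{0\}$ free of Milnor points outside $V_G$, and this in turn is equivalent to condition \eqref{eq:mainclass}.

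I expect the converse to be the genuinely delicate point. The submersion hypothesis is \emph{level-wise} — it controls each sphere $S_\e^{m-1}$ separately — whereas \eqref{eq:mainclass} is a germ statement along $V_G\setminus\{0\}$, and an accumulation of $M(G)\setminus V_G$ onto a point $q\in V_G$ with $0<\|q\|<\e_0$ typically happens across a whole family of nearby spheres rather than on the single sphere $S_{\|q\|}^{m-1}$. Converting level-wise transversality into a genuine conical neighborhood therefore requires that the admissible radius $\eta=\eta(\e)$ remain bounded below on compact subintervals of $(0,\e_0)$; this uniformity is what the semianalyticity of $M(G)$ and a \L ojasiewicz-type estimate supply, and it is the part I would cite from \cite{R} rather than reprove. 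Granting that uniformity, the union of the punctured tubes over $0<\e<\e_0$ is the desired conical neighborhood, and \eqref{eq:mainclass} follows.
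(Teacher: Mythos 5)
Your proposal is correct, and its logical skeleton is essentially the paper's: both directions ultimately run through the equivalence, stated in the paragraph preceding the lemma and attributed to \cite{R}, between condition \eqref{eq:mainclass} and the existence of a conical neighborhood of $V_G\setminus\{0\}$ containing no points of $M(G)\setminus V_G$. The differences are in execution, and they are worth recording. For the forward implication the paper invokes that conical neighborhood and then squeezes each set $S^{m-1}_{\e}\cap G^{-1}(B^{p}_{\eta}\setminus\{0\})$ inside it; you instead argue sphere by sphere, using the positive distance between the compact set $S^{m-1}_{\e}\cap V_G$ and the closed set $\overline{M(G)\setminus V_G}$, together with the collapsing of the tube onto $S^{m-1}_{\e}\cap V_G$ as $\eta\to 0$. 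Your version is self-contained and, if anything, more careful than the paper's. For the converse the paper attempts a direct construction: from the level-wise submersions it asserts that, ``taking $\e\to 0$,'' the union of the punctured tubes is a conical neighborhood free of Milnor points. That one-sentence step is precisely the uniformity issue you isolate: the union $\bigcup_{0<\e<\e_0}S^{m-1}_{\e}\cap G^{-1}(B^{p}_{\eta(\e)})$ does avoid $M(G)\setminus V_G$, but it is a \emph{neighborhood} of $V_G\setminus\{0\}$ only if $\eta(\e)$ can be kept locally bounded below, which does not follow from the pointwise existence of $\eta(\e)$; one needs some finiteness coming from the (sub)analytic structure of $M(G)$, e.g.\ applied to the function $\e\mapsto\inf\{\|G(x)\| : x\in S^{m-1}_{\e}\cap M(G)\setminus V_G\}$, or a curve selection argument. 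So your decision to route the converse through Definition \ref{d2} and the cited equivalence, while explicitly flagging where the genuine work lies, is not a gap \emph{relative to the paper}: the paper's own proof glosses the identical step. The only caveat is that, taken literally, your converse makes Lemma \ref{f2L3} nearly a restatement of the paragraph that precedes it; a fully self-contained proof would still have to supply the uniformity argument you describe rather than cite it.
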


		\begin{proof}
			Let $\e_{0}>0$ be such that for any $0< \e <\e_0$, condition \eqref{eq:mainclass} is satisfied. Then we can find a conical neighborhood $\mathcal{N}$ of $V_G \setminus {0}$ such that $(M(G)\setminus G^{-1}(\Disc G)) \cap \mathcal{N} = \emptyset$. Consequently, for all $x\in \left(\mathcal{N} \cap S_{\e}^{m-1}\right) \setminus G^{-1}(\Disc G)$, we have $S_{\e}^{m-1} \pitchfork_x G^{-1}(G(x))$. Now, consider $\eta_\e >0$ depending on $\e>0$ such that the inclusion $S_{\e}^{m-1}\cap G^{-1}(B^{p}_{\eta_\e} \m \Disc  G) \subset \left(\mathcal{N} \cap S_{\e}^{m-1}\right) \m G^{-1}(\Disc G)$. Then, for any $0<\eta <\eta_\e$, we have that (\ref{rest1}) is a smooth submersion.
			
			Conversely, let $\e_0>0$ be such that for any $0<\e <\e_0$, there exists $\eta$, $0<\eta \ll \e$, such that the restriction map (\ref{rest1}) is a smooth submersion. In this case, we have $M(G) \cap ( S_{\e}^{m-1}\cap G^{-1}(B^{p}_{\eta} \setminus \Disc G)) = \emptyset$. Taking $\e \to 0$, we obtain a conical neighborhood $\mathcal{N}$ of $V_G \setminus {0}$ such that $(M(G)\setminus G^{-1}(\Disc G)) \cap \mathcal{N} = \emptyset$, which is equivalent to condition \eqref{eq:mainclass}.
		\end{proof}
		
		\vspace{0.2cm}
		
		Now, let us show that the condition $ \Sing G \cap V_G = \{0\} $ implies condition \eqref{eq:mainclass}. Before that, let us consider the following lemma:
		
		\vspace{0.2cm}
		
		\begin{lemma}\label{topl1}
			Let $ G:(\mathbb{R}^n, 0) \to (\mathbb{R}^p,0) $ be an analytic germ. If $ \Sing G \cap V_G \subset {0} $, then there exists $ \e_0>0 $ such that for all $ 0<\e \le \e_0 $, $ 0\in \mathbb{R}^p $ is a regular value of the restriction 	\[G_{|}:S^{n-1}_{\e} \to \mathbb{R}^{p} \]
		\end{lemma}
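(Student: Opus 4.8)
The plan is to reduce the statement to a claim about the Milnor set and then close it with Milnor's classical Curve Selection Lemma argument. First I would record that, for $\e$ small enough, the fibre $(G_{|S^{n-1}_\e})^{-1}(0)=V_G\cap S^{n-1}_\e$ consists only of regular points of $G$: by hypothesis $\Sing G\cap V_G\subset\{0\}$ as a germ, so there is a neighbourhood of the origin in which the only point of $\Sing G\cap V_G$ is $0$; since $0\notin S^{n-1}_\e$, every $x\in V_G\cap S^{n-1}_\e$ then satisfies $x\notin\Sing G$, whence $\dd_x G$ has rank $p$ and $V_G$ is a smooth codimension-$p$ manifold near $x$.

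Next I would translate ``$0$ is a regular value of $G_{|S^{n-1}_\e}$'' into a Milnor-set condition. At a point $x\in(V_G\cap S^{n-1}_\e)\setminus\{0\}$ where $G$ is already a submersion, the restriction $G_{|S^{n-1}_\e}$ is a submersion at $x$ if and only if $T_x S^{n-1}_\e+T_x V_G=\R^n$, i.e. $S^{n-1}_\e\pitchfork_x G^{-1}(G(x))$. Taking orthogonal complements and using $\nabla\rho(x)=2x$ for $\rho(x)=\|x\|^2$, this transversality fails exactly when $x\in\span\{\nabla G_1(x),\dots,\nabla G_p(x)\}$, which by Proposition \ref{pm} is precisely the condition $x\in M_\rho(G)$. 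Hence the conclusion of the lemma is equivalent to the assertion that $M_\rho(G)\cap V_G$ has no point other than $0$ in a small punctured neighbourhood of the origin, since the spheres $S^{n-1}_\e$ foliate such a neighbourhood.

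Then I would argue by contradiction. If the germ of $(M_\rho(G)\cap V_G)\setminus\{0\}$ at the origin were nonempty, then, since $M_\rho(G)\cap V_G$ is a real analytic set cut out by $G=0$ together with the analytic condition $\det\!\big(A(x)A(x)^T\big)=0$ from Proposition \ref{pm}, the Curve Selection Lemma would provide a real analytic curve $\gamma\colon[0,\delta)\to M_\rho(G)\cap V_G$ with $\gamma(0)=0$ and $\gamma(t)\neq 0$ for $t>0$. Differentiating $G\circ\gamma\equiv 0$ gives $\dd_{\gamma(t)}G(\gamma'(t))=0$, so $\gamma'(t)\in T_{\gamma(t)}V_G$; while $\gamma(t)\in M_\rho(G)$ forces $\gamma(t)\in\span\{\nabla G_1(\gamma(t)),\dots,\nabla G_p(\gamma(t))\}=(T_{\gamma(t)}V_G)^{\perp}$. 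Therefore $\langle\gamma(t),\gamma'(t)\rangle=0$, i.e. $\tfrac12\tfrac{d}{dt}\|\gamma(t)\|^2=0$, so $\|\gamma(t)\|^2$ is constant; as $\gamma(0)=0$ this constant is $0$, contradicting $\gamma(t)\neq 0$. Taking $\e_0$ smaller than both the radius from the first step and the radius in which $(M_\rho(G)\cap V_G)\setminus\{0\}$ is empty yields the required $\e_0$.

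The hard part is really the second step: making the equivalence between the failure of $0$ being a regular value and membership in $M_\rho(G)$ precise, in particular using $x\neq 0$ and $x\notin\Sing G$ so that $\{\nabla G_i(x)\}$ are linearly independent and $\nabla\rho(x)=2x\neq 0$, which is exactly what lets Proposition \ref{pm} apply. Once the problem is phrased as the non-accumulation of $M_\rho(G)\cap V_G$ at the origin, the Milnor/Curve Selection argument is entirely standard, the only analytic input being that $M_\rho(G)\cap V_G$ is analytic, guaranteed again by Proposition \ref{pm}.
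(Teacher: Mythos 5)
Your proof is correct and takes essentially the same route as the paper: both arguments reduce the lemma to showing that the analytic set $M_\rho(G)\cap V_G$ (which the paper writes as the rank-degenerate locus $W$, identified via Milnor's Lemma 2.7 as the critical set of $\rho$ restricted to $V_G\setminus\{0\}$) does not accumulate at the origin, and both close this by the Curve Selection Lemma together with the observation that $\langle\gamma(t),\gamma'(t)\rangle=0$ forces $\|\gamma(t)\|^2\equiv 0$, a contradiction. The only difference is cosmetic: you phrase the failure set through Proposition \ref{pm} and transversality of the sphere to the fibers, while the paper phrases the same set through the singular locus of the restricted distance function.
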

		
		\begin{proof}
			Since $ \Sing G \cap V_G \subset {0} $, there exists $ \e_1>0 $ such that for every $ 0<\e<\e_1 $, the set $ M_\e:=V_G \m {0}\cap B_{\e}^n $ is a smooth manifold of dimension $ m-p $. Let $ \rho_{|}:M_\e \to \mathbb{R} $ be the function defined by $ \rho_{|}(x)=|x|^2 $. According to \cite[Lemma 2.7]{Mi}, we have $ \Sing (\rho_{|}) = M_\e \cap W $, where $ W $ is the set of points $ x \in V_G \cap B^{n}_{\e} $ such that the matrix
			
			\begin{equation*}
				\left[ 	\begin{tabular}{c}
					$\nabla G_1 (x) $ \\
					$ \vdots $ \\
					$\nabla G_p (x) $\medskip\\
					$\nabla  \rho (x)$ \medskip
				\end{tabular}\right] 
			\end{equation*}	has rank $ \le p $. We claim that $ 0 \notin \overline{W} $. Indeed, if $ 0 \in \overline{W} $, then by the Curve Selection Lemma, there exists a real analytic curve $ \alpha:(0,\delta ) \to \mathbb{R}^n $ such that $ \alpha(0)=0 $ and $ \alpha(t) \in W \m {0} $ for all $ t \in (0,\delta ) $. Consider the composition $ \rho \circ \alpha:[0, \delta ) \to \mathbb{R}$. For all $ t \in (0,\delta )$, we have $ \nabla\rho (\alpha(t)) = 0 $ (since $ \alpha(t) \in \Sing (\rho_{|}) $), thus  $$ \dfrac{ (\rho \circ \alpha)(t)}{ t} = 0, $$ implying $ \rho \circ \alpha \equiv 0 $. Consequently, $ \alpha(t) = 0 $ for all $ t \in (0,\delta )$, which is a contradiction. Therefore, there exists $ \e_0 >0 $ with $ \e_0 < \e_1 $ such that $ W \cap B_{\e_0}^n = {0} $.
			
			Now, fix an arbitrary $ \e>0 $ such that $ \e <\e_0 $, and suppose that $ 0 $ is a critical value of $ G_{|} $. By definition, there exists a singular point $ x \in (G_{|})^{-1}(0) = S^{n-1}_{\e} \cap V_G $. Since  $ \Sing G \cap V_G  \subset \{0\}  $, then  $ x \in \mbox{span} \{\nabla G_1, \ldots, \nabla G_p \} $, i.e., $ x\in W\cap B^{n}_{\e_0} $, which is a contradiction again.
		\end{proof}
		
		\vspace{0.2cm}
		
		\begin{proposition}\label{topp1}
			Let $G:(\mathbb{R}^m, 0) \to (\mathbb{R}^p,0)$ be a real analytic germ. If $ \Sing G \cap V_G  \subset \{0\} $, then $ G $ satisfies the Milnor condition (b), i.e., $ \overline{M(G) \m V_G} \cap V_G \subset \{0\} $.
		\end{proposition}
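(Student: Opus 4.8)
The plan is to derive the Milnor condition (b), i.e. $\overline{M(G)\m V_G}\cap V_G\subseteq\{0\}$, from Lemma \ref{topl1} together with the gradient characterization of the Milnor set in Proposition \ref{pm}, by a short limiting argument. First I would record a pointwise reformulation valid for $x\neq 0$: since $\nabla\rho(x)=2x$, Proposition \ref{pm} says that $x\in M(G)$ exactly when $\{\nabla\rho(x),\nabla G_1(x),\dots,\nabla G_p(x)\}$ is linearly dependent, which for $x\neq0$ is equivalent to the existence of a nonzero $\lambda=(\lambda_1,\dots,\lambda_p)$ with $\sum_i\lambda_i\nabla G_i(x)\in\span\{x\}$ (a nontrivial relation using only $\nabla\rho$ would force $2x=0$, hence $x=0$, and is excluded). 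But this is precisely the condition that $x$ be a \emph{critical} point of the restriction $G_{|}\colon S^{m-1}_{|x|}\to\bR^p$, because $\dd(G_{|})_x$ fails to be surjective exactly when some nonzero $\lambda$ makes $\sum_i\lambda_i\nabla G_i(x)$ normal to the sphere, i.e. lie in $\span\{x\}$. Hence, for $x\neq 0$, one has $x\notin M(G)$ if and only if $x$ is a regular point of $G_{|}$ on the sphere through $x$.

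Next I would invoke Lemma \ref{topl1}: under the hypothesis $\Sing G\cap V_G\subseteq\{0\}$ there is $\e_0>0$ so that for every $0<\e\le\e_0$ the value $0\in\bR^p$ is regular for $G_{|}\colon S^{m-1}_{\e}\to\bR^p$. Combining this with the reformulation above yields the key consequence: every $x\in V_G$ with $0<|x|\le\e_0$ lies in the fibre $(G_{|})^{-1}(0)$ over a regular value, so $x$ is a regular point of $G_{|}$, so $x\notin M(G)$. In other words $V_G\cap B^m_{\e_0}\cap M(G)\subseteq\{0\}$. This already controls $V_G$ itself; the remaining task is to show that $M(G)\m V_G$ cannot accumulate on a nonzero point of $V_G$.

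For the final step I would argue by contradiction, treating everything as a germ in $B^m_{\e_0}$. Suppose $p_0\in\overline{M(G)\m V_G}\cap V_G$ with $p_0\neq 0$, and pick a sequence $x_k\in M(G)\m V_G$ with $x_k\to p_0$. By Proposition \ref{pm} each $x_k$ admits a nontrivial relation $2a_k x_k+\sum_i\lambda^{(k)}_i\nabla G_i(x_k)=0$ with $(a_k,\lambda^{(k)})\neq 0$; since the relation is homogeneous I normalise so that $\|(a_k,\lambda^{(k)})\|=1$. By compactness of the unit sphere in $\bR^{p+1}$ a subsequence converges to a unit vector $(a_*,\lambda^*)$, and by continuity of the gradients of the analytic map $G$ the limit relation $2a_* p_0+\sum_i\lambda^*_i\nabla G_i(p_0)=0$ holds with $(a_*,\lambda^*)\neq 0$. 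Thus $\{\nabla\rho(p_0),\nabla G_1(p_0),\dots,\nabla G_p(p_0)\}$ is linearly dependent, so $p_0\in M(G)$ by Proposition \ref{pm}. But $p_0\in V_G$ with $0<|p_0|\le\e_0$, which by the previous paragraph forces $p_0\notin M(G)$ — a contradiction. Therefore no such $p_0$ exists and \eqref{eq:mainclass} holds.

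I expect the only delicate point to be guaranteeing that the limiting dependence relation is nontrivial: this is precisely why I normalise the coefficient vectors $(a_k,\lambda^{(k)})$ before passing to the limit, so that the limit is a genuine unit vector and the relation at $p_0$ cannot degenerate to $0=0$. As a sanity check, if the limiting $\lambda^*$ vanished then $2a_*p_0=0$ with $a_*\neq0$ would give $p_0=0$, contrary to assumption, so in fact $\lambda^*\neq0$ as well. The rest is bookkeeping with Proposition \ref{pm} and Lemma \ref{topl1}, and no appeal to $\Disc G=\{0\}$ or to Lemma \ref{f2L3} is needed.
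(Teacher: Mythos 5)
Your proposal is correct, and it takes a genuinely different route from the paper in its second half. Both arguments rest on Lemma \ref{topl1} (that $0$ is a regular value of $G_{|}:S^{m-1}_{\e}\to\bR^p$ for all small $\e$), but from there the paper proceeds fibration-theoretically: it first upgrades Lemma \ref{topl1}, via a properness/contradiction argument, to the statement that a whole disc $B^p_{\eta(\e)}$ of values contains no critical values of $G_{|}$ on $S^{m-1}_{\e}$, concludes that the restrictions \eqref{006} and \eqref{007} are submersions, and then invokes the equivalence of Lemma \ref{f2L3} to translate submersivity back into condition \eqref{eq:mainclass}. You instead work directly with the Milnor set: using Proposition \ref{pm} you identify $M(G)\m\{0\}$ with the critical points of the restrictions of $G$ to the spheres through each point, so Lemma \ref{topl1} immediately gives the stronger germ statement $M(G)\cap V_G\subseteq\{0\}$, and your normalization-and-compactness limit argument (which is in effect a proof that $M(G)$ is closed) then yields $\overline{M(G)\m V_G}\cap V_G\subseteq M(G)\cap V_G\subseteq\{0\}$. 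What your route buys is twofold: it is more elementary, and it sidesteps Lemma \ref{f2L3}, whose statement carries the hypothesis of an isolated critical value --- a hypothesis that Proposition \ref{topp1} does not impose and that can genuinely fail in its applications (e.g.\ in Remark \ref{mhl1}, where $\Disc G$ is positive-dimensional), so your proof is actually better matched to the stated generality than the paper's. What the paper's route buys is the intermediate submersion statements \eqref{006}--\eqref{007} and the uniformity in $\eta$, which are exactly the ingredients used elsewhere for the tube fibration. One small streamlining of your step 3: the closedness of $M(G)$ follows at once from the determinantal description $M_\rho(G)=\{x\in U \mid \det\left(A(x)A(x)^T\right)=0\}$ given right after Proposition \ref{pm}, so the explicit sequence-of-coefficients argument, while correct, could be replaced by a one-line citation.
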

		
		\begin{proof}
			
			We claim that for  $ \e_0>0 $ given in the Lemma \ref{topl1} and  $0<\e\le \e_0$, there exists $ \eta(\e)>0 $ with $ 0<\eta(\e) \ll \e $, such that for  $ 0<\eta<\eta(\e) $ the $ p $-dimensional disc $ B^{p}_\eta $ does not contain critical values of the restriction  
			\[G_{|}:S^{m-1}_{\e} \to \mathbb{R}^{p}. \]
			Indeed, let $ \e>0 $ with $0<\e\le \e_0$ fixed. If the claim is false, it follows from Lemma  \ref{topl1},    that for  $ n\in \mathbb{N} $ sufficiently large, there exists $ y_n \in \Disc (G_{|}) \cap (B^{p}_{1/n} \m \{0\} )$, where $ \Disc (G_{|}) = G_{|}(\Sing (G_{|})) $. Let $ x_n \in \Sing (G_{|}) $ be such that  $ y_n = G_{|}(x_n) $. Since  $ y_n \to 0 $ as $ n \to \infty $, and $ G_{|} $ is proper, then $ x_n $  converges for some $ x_0 $  up to a subsequence. By continuity
			\[0=\lim_{n \to \infty} G_{|}(x_n)= G_{|}(x_0) . \]
			Thus, $ x_0 \in \overline{\Sing (G_{|})}=\Sing (G_{|}) $ e $ x_0 \in (G_{|})^{-1}(0) $ i.e., $ 0 \in \mathbb{R}^p $ is a critical value of $ G_{|}, $ which is a contradiction. 
			
			Consequently, the new restrictions
			\begin{equation}\label{006}
				G_{|}:S^{m-1}_{\e} \cap G^{-1}(B_{\eta}^{p})\to B_{\eta}^{p}
			\end{equation}
			and
			\begin{equation}\label{007}
				G_{|}:S^{m-1}_{\e} \cap G^{-1}(B_{\eta}^{p}\m \{0\})\to B_{\eta}^{p} \m \{0\}
			\end{equation}
			are submersions. Now the result is a consequence of Lemma \ref{f2L3}.
		\end{proof}
		
		\vspace{0.2cm}

		\begin{remark}\label{mhl1}
			Let $f:(\bR^m,0) \to (\bR^{p},0)$ be a germ of real analytic map and $g:(\bR^n,0) \to (\bR^n,0)$ be a germ of diffeomorphism such that $f$ and $g$ have separable variables. Consider the germ $ G $ defined from $ f $ and $ g $ as follows: $G:=(f,g):(\bR^{m}\times \bR^n,0) \to (\bR^{p}\times \bR^n,0)$. If $f$ has an isolated singularity at the origin, then $\Sing G \cap V_G =\{0\}$. Indeed, we have $\Sing G = \Sing f \times \bR^n=\{0\} \times \bR^n$ and $V_G = V_f \times V_g= V_f \times \{0\}$. Thus, $\Sing G \cap V_G = (\{0\} \times \bR^n) \cap V_f \times \{0\} = \{0\}$. By Proposition \ref{topp1}, $ G $ satisfies the condition \eqref{eq:mainclass}.
		\end{remark}
		
		\vspace{0.2cm}

		\begin{example}
			Let $f: \mathbb{R}^n \to \mathbb{R}$ be given by $f(x_1,\ldots,x_n)=\sum_{j=1}^{n}c_jx_{j}^{a_{j}}$ with $a_j \ge 2$, $c_j \in \bR \setminus \{0\}$, and let $g:(\bR^n,0) \to (\bR^n,0)$ be the identity map $g(y)=y$. Since $\Sing f = \{0\}$, it follows that $ G $ satisfies the condition \eqref{eq:mainclass}.
		\end{example}
		
		\vspace{0.2cm}
		
		\subsection{The Milnor condition (b) for composed singularities}
		
		Let $F:\bR^{m}\rightarrow \bR^{p}$ and $ G:\bR^{p}\rightarrow \bR^{k}, m\geq p\geq k\geq 2$ be two analytic maps germ. Denoted by $H=G\circ F$ the map composition between $F$ and $G$. Unless otherwise stated, we are going to keep this notation fixed throughout this section. 
		
		\vspace{0.2cm}
		
		It follows from Theorem \ref{ttf} and Lemma \ref{f2L3} that an analytic map germ admits a Milnor tube fibration if the discriminant set is just the origin and it satisfies the Milnor conditions (b). In \cite{ADRS}, the authors consider the following natural question:  
		
		\begin{question}
			What conditions must have $F$ and $G$ to guarantee that, the composition map $H$ satisfies the Milnor conditions (b) and $ \Disc H = \{0\} $?
		\end{question}
		
		About the Milnor condition $ \Disc H = \{0\} $, the problem was treated entirely in \cite{ADRS} by the authors. Here, we are going to give special attention to the problem of the Milnor condition (b) for map composition. 
		
		\vspace{0.2cm}
		
		In \cite{CT23}, the authors proved that $H$ satisfies the Milnor condition (b), provided that $F$ satisfies the Milnor condition (b) and $G$ has an isolated singular point, i.e, $\Sing G\subseteq \{0\}$. Before this moment, we can find other approaches to this subject in the literature, just in the particular case when $G$ is the canonical projection $\pi:\bR^{p}\rightarrow \bR^{k}$. For $F$ with an isolated singular point, the case of which was initially approached by Y. Iomdin \cite{Io}, to resolve a conjecture established by J. Milnor. \cite[p.100]{Mi}. Furthermore, this case ($\Sing F \subseteq\{0\}$) was approached in \cite{DA} by using completely independent tools and techniques, providing formulas for the Euler characteristic of the Milnor Fiber. Subsequently in \cite{DA}, the authors addressed the more general case, where $F$ satisfies the Milnor condition (b), $ \textrm{dim} \Sing F\ge 0 $ with $ \Disc F=\{0\} $, and proved that the map $\pi \circ F$ satisfies them too, see \cite[Lemma 4.1]{DA}.
		
		\vspace{0.2cm}
		
		In general, it is not true the map composition $H=G\circ F$ satisfies the Milnor condition (b) when $\dim \Sing G>0$. Even if $F$ and $G$ satisfy the Minor condition (b), it is not true their composition holds it, as was shown lately in \cite[Example 4.2]{ADRS}. Let us introduce another example where the Milnor condition (b) fails for the map $H$ under the same conditions for components $ F $ and $ G $.

		\vspace{0.2cm}
		
		\begin{example}\label{contraexamplo}
			Let $F:\bR^{4}\rightarrow \bR^{3}$ and $ G:\bR^{3}\rightarrow \bR^{2}$ be real analytic maps germ giving by $F(x,y,z,w):=(x,y,z(x^{2}+y^{4}+z^{6}))$ and $G(uv,vt)$. Therefore, $H(x,y,z,w)=(x y,y z(x^{2}+y^{4}+z^{6}))$, $\Sing H=\{y=0\}$ and $M(H)=\{y=0\}\cup\{w=0,p(x,y,z)=0\}$, where $p(x,y,z)=x^{4}-x^{2}y^{2}+x^{2}y^{4} - y^{6}-x^{2}z^{2}+5y^{4}z^{2}+7x^{2}z^{6}-
			7y^{2}z^{6}+z^{8}$. In this case, one has that $F, G$, and $H$ have isolated critical value. Moreover, $F$ and $G$ satisfy Milnor condition (b), but $H$ does not. To see that $H$ does not hold Milnor condition (b), we can procedure in the following way: Setting $b(y,z):=-y^{2}+y^{4}-z^{2}+7z^{6}$ and $c(y,z):=-y^{6}+y^{2}(5y^{2}z^{2}-7z^{6})+z^{8}$, then 
			
			\begin{center}
				$p(x,y,z)=x^{4}+b(y,z)x^{2}+c(y,z)=0$
			\end{center}
			
			By continuity, it is possible takes $\epsilon>0$ such that for all $y,z\in \bR$ with $\epsilon>|y|,|z|>0$ one has that $b(y,z)^{2}-4c(y,z)\geq 0$. Consequently, the branch

			\begin{center}
				$x(y,z)=\sqrt{\frac{-b(y,z)+\sqrt{b(y,z)^{2}-4c(y,z)}}{2}}$
			\end{center}
			
			is well defined for all $y,z\in \bR$ with $\epsilon>|y|,|z|>0$.
			
			By construction, the branch $\phi(y,z):=(x(y,z),y,z,0)\subset M(H)\setminus \Sing H$ for all $y,z\in \bR$ with $\epsilon>|y|,|z|>0$. Consequently $\phi(0,z)\in \overline{ M(H)\setminus \Sing H}\cap\Sing H$, for all $z\in \bR$ with $|z|<\epsilon$ and therefore $H$ does not hold Milnor condition (b).
			
		\end{example}
		
		\vspace{0.2cm}
		
		The example \ref{contraexamplo} is inspired in example 4.2 in \cite{ADRS} and we can project $M(H)$ in $\bR^{3}$ to illustrate why $H$ does not satisfy Milnor condition (b). See figure \ref{figura}.

		\vspace{0.2cm}

		\begin{figure}\label{figura}
			\centering
			\includegraphics[scale=0.45]{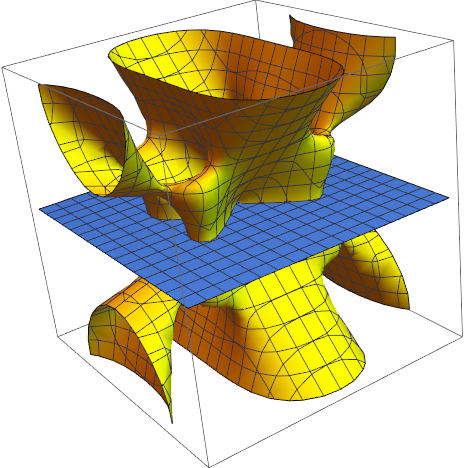}
			\caption{The Milnor set $M(H)$ in example \ref{contraexamplo}. In Yellow, the set $\{w=0,p(x,y,z)=0\}$ and, in blue the set $\Sing H$.}
			\label{fig:enter-label}
		\end{figure}
		
		\vspace{0.2cm}
		
		In the same work (\cite{ADRS}), the authors introduced a new regularity condition and proved the following. 
		
		\vspace{0.2cm}
		
		\begin{theorem}\cite[Theorem 4.4]{ADRS}\label{tp}
			If  $F$ and $ G $ have isolated critical value and $ F $ satisfies the Milnor condition (b),  the  composition map $H=G\circ F$ satisfies the Milnor condition (b) if and only if the following condition holds 
			
			\vspace{0.2cm}
			
			\begin{equation}\label{regularity condition}
				\overline{F(M(H)\setminus \Sing H)}\cap \Sing G\subseteq \{0\}. 
			\end{equation}
		\end{theorem}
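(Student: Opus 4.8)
The plan is to reduce the statement to a clean inclusion between the Milnor sets of $H$ and $F$ and then to run a contrapositive argument for each implication. I work with $\rho(x)=\|x\|^{2}$, so that by Proposition \ref{pm} membership in $M(H)$, $M(F)$, $M(G)$ means linear dependence of the radial vector together with the relevant gradients. Two elementary facts will do the work. (\emph{Chain rule.}) From $\nabla H_\ell(x)=dF(x)^{T}\nabla G_\ell(F(x))$ we get $\span\{\nabla H_1(x),\dots,\nabla H_k(x)\}\subseteq \mathrm{im}\,dF(x)^{T}$; hence if $x\in M(H)\setminus\Sing H$ and $x\notin\Sing F$, then $x\in\span\{\nabla H_\ell(x)\}\subseteq\span\{\nabla F_1(x),\dots,\nabla F_p(x)\}$, so $x\in M(F)$. (\emph{Rank transfer.}) If $x\in\Sing H$ and $F(x)\neq 0$, then $x\notin V_F\supseteq\Sing F$, so $dF(x)$ is surjective and $\rank dH(x)=\rank dG(F(x))$; therefore $F(x)\in\Sing G$. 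The same argument shows $\Sing H\subseteq V_H$, i.e.\ $\Disc H=\{0\}$, so by Lemma \ref{f2L3} the Milnor condition (b) for $H$ becomes the statement that no sequence in $M(H)\setminus\Sing H$ accumulates at a nonzero point of $\Sing H$.

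Next I isolate the degenerate case $\Sing G=\emptyset$, in which $G$ is a submersion germ, \eqref{regularity condition} holds vacuously, and, after putting $G$ in submersion normal form, $H$ reduces to $\pi\circ(\phi\circ F)$ for a projection $\pi$; since the Milnor set depends only on the fibers and hence is unchanged under target diffeomorphisms, $\phi\circ F$ still satisfies Milnor (b) and \cite[Lemma 4.1]{DA} gives Milnor (b) for $H$. In the main case $\Sing G\neq\emptyset$ one has $0\in\Sing G$ (otherwise $dG(0)$ would be surjective and $G$ a submersion near $0$). Choosing $c\neq 0$ with $\sum_\ell c_\ell\nabla G_\ell(0)=0$ and using $\nabla H_\ell(x)=dF(x)^{T}\nabla G_\ell(0)$ on $V_F$ gives $\sum_\ell c_\ell\nabla H_\ell(x)=0$ there, so $V_F\subseteq\Sing H$. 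Combined with the chain-rule fact this yields the key inclusion
\begin{equation*}
	M(H)\setminus\Sing H\subseteq M(F)\setminus V_F,
\end{equation*}
because on $M(H)\setminus\Sing H$ we have $F\neq 0$ (as $V_F\subseteq\Sing H$), hence $x\notin\Sing F$ and $x\in M(F)$, while also $x\notin V_F$.

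For the implication ``Milnor (b) $\Rightarrow$ \eqref{regularity condition}'' I argue by contraposition. If \eqref{regularity condition} fails, pick $y_0\in\overline{F(M(H)\setminus\Sing H)}\cap\Sing G$ with $y_0\neq 0$ and $x_n\in M(H)\setminus\Sing H$ with $F(x_n)\to y_0$. Since $F(x_n)\to y_0\neq 0=F(0)$, the $x_n$ stay away from the origin, so a subsequence converges to some $x_0\neq 0$ with $F(x_0)=y_0\in\Sing G\subseteq V_G$, whence $x_0\in V_H$. As $F(x_0)\neq 0$ forces $x_0\notin\Sing F$ and $\rank dH(x_0)=\rank dG(y_0)<k$, we get $x_0\in\Sing H$, so $x_0\in\overline{M(H)\setminus\Sing H}\cap\Sing H\setminus\{0\}$ and $H$ fails Milnor (b).

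For the converse I combine contraposition with the key inclusion. Suppose $H$ fails Milnor (b): there are $x_n\in M(H)\setminus\Sing H$ with $x_n\to x_0\in\Sing H\setminus\{0\}$, and by the inclusion $x_n\in M(F)\setminus V_F$. If $F(x_0)=0$ then $x_0\in V_F\setminus\{0\}$ and $x_0\in\overline{M(F)\setminus V_F}\cap V_F$, contradicting Milnor (b) for $F$. If $F(x_0)\neq 0$ then $x_0\notin\Sing F$ and the rank-transfer fact gives $F(x_0)\in\Sing G$; since $F(x_n)\to F(x_0)\neq 0$ with $x_n\in M(H)\setminus\Sing H$, this puts $F(x_0)$ in $\overline{F(M(H)\setminus\Sing H)}\cap\Sing G\setminus\{0\}$, contradicting \eqref{regularity condition}. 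The delicate point I expect to guard carefully is the key inclusion itself: verifying $V_F\subseteq\Sing H$ so that $M(H)\setminus\Sing H$ misses $\Sing F$ and the chain-rule fact applies, and correctly separating the locus where $F$ vanishes; once this bookkeeping is secured, both implications are formal.
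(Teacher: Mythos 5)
The paper you were given does not actually prove Theorem \ref{tp}: it quotes it from \cite[Theorem 4.4]{ADRS}, so your proposal has to be judged on its own merits. Its skeleton is sound, and most individual steps check out: the chain-rule inclusion $\textrm{span}\{\nabla H_1(x),\dots,\nabla H_k(x)\}\subseteq\textrm{span}\{\nabla F_1(x),\dots,\nabla F_p(x)\}$, the rank-transfer observation and its consequence $\Disc H=\{0\}$, the fact that a nonempty germ $\Sing G$ must contain $0$ and hence $V_F\subseteq\Sing H$, the key inclusion $M(H)\setminus\Sing H\subseteq M(F)\setminus V_F$, and the treatment of the degenerate case $\Sing G=\emptyset$ via the submersion normal form together with \cite[Lemma 4.1]{DA} are all correct.

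The genuine gap is the sentence where you recast the Milnor condition (b) for $H$ as ``no sequence in $M(H)\setminus\Sing H$ accumulates at a nonzero point of $\Sing H$'', claiming this follows from Lemma \ref{f2L3}. The paper's definition of condition (b) is $\overline{M(H)\setminus V_H}\cap V_H\subseteq\{0\}$, and Lemma \ref{f2L3} only constrains the part of $M(H)$ lying \emph{off} $V_H$ (points of the tube where $H\neq 0$); it says nothing about $M(H)\cap(V_H\setminus\Sing H)$, the locus where the spheres are tangent to the \emph{regular} part of the zero fibre. Both of your implications lean on this substitution of $\Sing H$ for $V_H$. In the forward direction, the sequence $x_n\in M(H)\setminus\Sing H$ realizing the failure of \eqref{regularity condition} could a priori lie entirely inside $V_H$, in which case its limit $x_0$ does not witness the failure of condition (b) as defined. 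In the backward direction, the failure of condition (b) only hands you $x_n\in M(H)\setminus V_H$ converging to some $x_0\in V_H\setminus\{0\}$; if $x_0\notin\Sing H$, then (since $V_F\subseteq\Sing H$ forces $F(x_0)\neq 0$) rank transfer gives $F(x_0)\notin\Sing G$ and $x_0\notin V_F$, so neither condition (b) for $F$ nor \eqref{regularity condition} is contradicted and the argument stalls. The repair is an auxiliary lemma you never state or prove: as a germ at the origin, $M(H)\cap(V_H\setminus\Sing H)\subseteq\{0\}$, i.e.\ the critical points of $\rho$ restricted to the regular part of $V_H$ do not accumulate at $0$. This is exactly Milnor's curve-selection argument reproduced in the proof of Lemma \ref{topl1}: an analytic curve of such tangency points tending to $0$ would make $\rho$ constant, hence identically zero, along it. With this lemma, $\overline{M(H)\setminus\Sing H}\cap\Sing H\subseteq\{0\}$ and $\overline{M(H)\setminus V_H}\cap V_H\subseteq\{0\}$ are equivalent as germs (and the sequences above can be assumed to avoid $V_H$, respectively to limit onto $\Sing H$), after which both of your contrapositive arguments, including the implicit requirement that the bad points occur arbitrarily close to the origin, go through.
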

		
		\vspace{0.2cm}
		
		This new condition is sharp \cite[Example 4.9]{ADRS}. Also, the authors showed that $H$ can satisfy the Milnor condition (b) even if $F$ or $G$ does not satisfy it (\cite[Example 4.10 and Example 4.11]{ADRS}). Furthermore, they provided an example where $F, G$, and $H$ hold the Milnor condition  (b), have isolated critical value, and $\dim \Sing G>0$. 
		
		\vspace{0.2cm}
		
		\begin{example}\cite[Example 4.8]{ADRS}
			Consider the map germs $F:\mathbb{R}^{4} \rightarrow \mathbb{R}^{3}$ and $G:\mathbb{R}^{3}\rightarrow \mathbb{R}^{2}$ given by $F(x,y,z,w)=(x,y,z(x^{2}+y^{2}+z^{2}+w^{2}))$ and $G(u,v,t)=(ut,vt)$. Consequently, we have $H(x,y,z,w)=(xz(x^{2}+y^{2}+z^{2}+w^{2}),yz(x^{2}+y^{2}+z^{2}+w^{2}))$. One can easily show that $F$ and $G$ hold Milnor conditions  (b) and have isolated critical value; $\Sing G=\{t=0\}$; $H$ has isolated critical value and  $M(H)=\{z=0\}\cup\{w=0,z^{2}=x^{2}+y^{2}\}$. Moreover, one can show $\overline{F(M(H)\m \Sing H)}=\{t^2 = 4(u^2+v^2)^3\}$. Therefore, $  H $ satisfies the condition  (\ref{regularity condition}), and the Theorem \ref{tp} ensure that $H$ satisfies the Milnor condition (b).
			
		\end{example}

		\vspace{0.2cm}

		Some consequences of condition $(\ref{regularity condition})$ are Lemma 4.1 in \cite{DA} and Theorem 3.2 in \cite{CT23}. For more details, see  \cite[Corollary 4.6]{ADRS} and \cite[Corollary 4.7]{ADRS}. 
		
		\vspace{0.2cm}
		
		The next two characterizations are immediate consequences of Theorem \ref{tp}. 
		
		\vspace{0.2cm}
		
		\begin{theorem} \cite[Proposition 4.14]{ADRS} \label{incl}
			Let $F:(\mathbb{R}^m, 0) \to (\mathbb{R}^p,0)$ and $G:(\mathbb{R}^p, 0) \to (\mathbb{R}^k,0), m\ge p \ge k \ge 2$, be analytic map germs, with $F$ satisfying the Milnor condition  (b) and $ \Disc F =\{0\} $. If $F(M(H))\subseteq M(G)$ as a germ of sets and $G$ satisfies the Milnor condition (b), then the composition $H=G\circ F$ satisfies the Milnor condition (b).    
		\end{theorem}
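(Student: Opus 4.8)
The plan is to invoke Theorem \ref{tp}: since $F$ and $G$ have isolated critical values and $F$ satisfies the Milnor condition (b), it suffices to verify the regularity condition \eqref{regularity condition}, namely $\overline{F(M(H)\m \Sing H)}\cap \Sing G\subseteq \{0\}$. Before doing so I would record two elementary facts coming from Proposition \ref{pm} and the chain rule $\dd_x H=\dd_{F(x)}G\circ \dd_x F$. First, if $x\notin \Sing H$ then $\rank \dd_x H=k$, and since $\rank \dd_x H\le \rank \dd_{F(x)}G\le k$, the outer differential has maximal rank, so $F(x)\notin \Sing G$; hence $F(M(H)\m \Sing H)\cap \Sing G=\emptyset$. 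Second, for $x\in M(H)\m \Sing H$ the gradients $\nabla H_1(x),\dots,\nabla H_k(x)$ are independent, so the linear dependence of $\{\nabla \rho(x),\nabla H_1(x),\dots,\nabla H_k(x)\}$ forces $\nabla \rho(x)\in \span\{\nabla H_i(x)\}\subseteq \span\{\nabla F_j(x)\}$; therefore $\{\nabla \rho(x),\nabla F_1(x),\dots,\nabla F_p(x)\}$ is dependent and $x\in M(F)$. In short, $M(H)\m \Sing H\subseteq M(F)$.

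With these in hand, the core of the argument is a two-stage use of the two Milnor conditions (b), which I find cleanest to run directly on $\overline{M(H)\m V_H}\cap V_H$ (equivalent, via Theorem \ref{tp}, to \eqref{regularity condition}). I would take $x_0$ in this set and a sequence $x_n\in M(H)\m V_H$ with $x_n\to x_0$. Since $H(x_n)\neq 0$ and $\Sing H\subseteq V_H$, we get $x_n\notin \Sing H$. Then $F(x_n)\in M(G)$ by the hypothesis $F(M(H))\subseteq M(G)$, while $F(x_n)\notin V_G$ because $x_n\notin V_H=F^{-1}(V_G)$; thus $F(x_n)\in M(G)\m V_G$. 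Passing to the limit and using $F(x_0)\in V_G$ (as $x_0\in V_H$), the Milnor condition (b) for $G$ gives $F(x_0)\in \overline{M(G)\m V_G}\cap V_G\subseteq \{0\}$, so $F(x_0)=0$ and $x_0\in V_F$. On the other hand $x_n\in M(F)$ by the second fact, and $x_n\notin V_F$ since $F(x_n)\neq 0$; hence $x_n\in M(F)\m V_F$ with $x_n\to x_0\in V_F$, and the Milnor condition (b) for $F$ forces $x_0=0$. This establishes condition \eqref{eq:mainclass} for $H$.

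The step I expect to be delicate is exactly the reason the argument is phrased through $M(H)\m V_H$ rather than literally through $M(H)\m \Sing H$. Points of $M(H)$ lying in $V_H$ are carried by $F$ into $M(G)\cap V_G$, a set on which the Milnor condition (b) for $G$ gives no information, so a head-on verification of \eqref{regularity condition} stalls on these ``interior'' tangency points; working with $M(H)\m V_H$ guarantees that the images land in $M(G)\m V_G$, where condition (b) for $G$ applies, and it is precisely this that lets the limit $F(x_0)$ be pushed to the origin so that the Milnor condition (b) for $F$ can close the argument. The only other point needing care is the justification of $\Sing H\subseteq V_H$ (that is, $\Disc H=\{0\}$): if $x\in \Sing H\m V_H$, then $x\notin \Sing F$ (otherwise $F(x)=0\in V_G$ and $x\in V_H$), so $\dd_x F$ is onto and $\rank \dd_x H=\rank \dd_{F(x)}G<k$, forcing $F(x)\in \Sing G\subseteq V_G$ and again $x\in V_H$, a contradiction.
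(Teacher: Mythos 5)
Your argument is, in substance, a complete proof---but of the statement \emph{with the hypothesis $\Disc G=\{0\}$ added}, and that hypothesis is not in the statement as printed. The two-stage scheme is sound: for $x_n\in M(H)\setminus V_H$ converging to $x_0\in V_H$, the hypothesis $F(M(H))\subseteq M(G)$ together with $F(x_n)\notin V_G$ puts $F(x_n)$ in $M(G)\setminus V_G$, so condition (b) for $G$ forces $F(x_0)=0$; then your inclusion $M(H)\setminus \Sing H\subseteq M(F)$ (chain rule plus Proposition \ref{pm}) puts $x_n$ in $M(F)\setminus V_F$, and condition (b) for $F$ forces $x_0=0$. This is also a genuinely different route from the paper's: the paper simply declares the result an immediate consequence of Theorem \ref{tp}, i.e.\ it would verify \eqref{regularity condition} and invoke the equivalence proved in [ADRS], whereas you verify \eqref{eq:mainclass} for $H$ directly and never need that equivalence. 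Your closing observation about why a head-on verification of \eqref{regularity condition} stalls---points of $M(H)\cap V_H$ have $F$-images in $M(G)\cap V_G$, where condition (b) for $G$ is silent---is accurate, and it shows your direct route is actually smoother than the one the paper gestures at.

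The gap: both at the outset (``since $F$ and $G$ have isolated critical values'') and in your final paragraph (where you invoke $\Sing G\subseteq V_G$ to obtain $\Disc H=\{0\}$), you assume $G$ has isolated critical value. The printed statement assumes only $\Disc F=\{0\}$ and that $G$ satisfies condition (b); condition (b) for $G$ does \emph{not} imply $\Sing G\subseteq V_G$, since it constrains only $\overline{M(G)\setminus V_G}\cap V_G$ and is perfectly compatible with $\Sing G\setminus V_G$ accumulating at the origin. Without $\Sing G\subseteq V_G$, your step ``$x_n\notin \Sing H$'' fails, and with it the containment $M(H)\setminus V_H\subseteq M(F)$ that drives stage two: a point of $\Sing H\setminus V_H$ lies in $M(H)$ for trivial reasons (its $\nabla H_i$ are already dependent), so membership in $M(H)$ yields no relation there between $\nabla\rho$ and the $\nabla F_j$. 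In fairness, the same hypothesis is required by Theorem \ref{tp}, which the paper claims yields this proposition ``immediately,'' so the omission lies in the survey's statement rather than in your reasoning; but as a blind proof of the statement as printed, this assumption must either be added as a hypothesis or explicitly flagged, since it cannot be derived from the others.
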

		
		\vspace{0.2cm}
		
		\begin{example}
			Let $F:\bR^{4}\rightarrow\bR^{3}$ and $G:\bR^{3}\rightarrow \bR^{2}$  be two analytic maps germs given by $F(x,y,z,w)=(xw,yw,zw)$ and $G(u,v,t)=(u,v(u^{2}+v^{2}))$. Consequently, $H(x,y,z,w)=(wx, yw^{3}(x^{2}+y^{2}))$. It is easy to check out that $F, G$ hold Milnor condition  (b), and $F, G$ and $ H $ have isolated critical value. Moreover,  $M(G)=\{t=0\}\cup\{u=v=0\}$ and $M(H)=\{w=0\}\cup\{x=y=0\}\cup\{z=0,w^{2}=x^{2}+y^{2}\}$. Since $F(M(H))\subseteq M(G)$, by Theorem \ref{incl} one concludes that $H$ satisfies Milnor condition (b).
		\end{example}
		
		\vspace{0.2cm}
		
		\begin{corollary}\cite[Corollary 4.15]{ADRS}\label{rec}
			Let $F:(\mathbb{R}^m, 0) \to (\mathbb{R}^p,0)$ and $G:(\mathbb{R}^p, 0) \to (\mathbb{R}^k,0), m\ge p \ge k \ge 2$, be analytic map germs, such that $F$ satisfies the Milnor condition (b) and has isolate critical value. Suppose that $F(M(H))=M(G)$ as a germ set at the origin. Then $G$ satisfies the Milnor condition (b) if and only if the composition $H=G\circ F$ satisfies the Milnor condition (b).    
		\end{corollary}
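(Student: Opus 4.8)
The plan is to prove the two implications separately: the forward one is essentially free, and all the real work goes into the converse. Throughout I take, as is needed to invoke Theorem~\ref{tp}, that $\Disc G=\{0\}$ and $\Disc H=\{0\}$, so that $\Sing G\subseteq V_G$ and $\Sing H\subseteq V_H$. Suppose first that $G$ satisfies the Milnor condition (b). The hypothesis $F(M(H))=M(G)$ gives in particular $F(M(H))\subseteq M(G)$, and $F$ satisfies the Milnor condition (b) with $\Disc F=\{0\}$; hence Theorem~\ref{incl} applies directly and shows that $H=G\circ F$ satisfies the Milnor condition (b). No further argument is required for this direction.

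For the converse I would argue by contraposition, routing everything through the equivalence of Theorem~\ref{tp}. The first step is the set inclusion
\[
M(G)\setminus V_G\ \subseteq\ F\bigl(M(H)\setminus \Sing H\bigr)\ \subseteq\ M(G).
\]
The right-hand inclusion is immediate from $F(M(H))=M(G)$. For the left-hand one, take $y\in M(G)\setminus V_G$ and write $y=F(x)$ with $x\in M(H)$; then $G(y)\neq 0$ forces $H(x)=G(F(x))\neq 0$, so $x\notin V_H$, and since $\Sing H\subseteq V_H$ we get $x\notin\Sing H$. Thus $x\in M(H)\setminus\Sing H$, whence $y\in F(M(H)\setminus\Sing H)$. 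Passing to closures yields $\overline{M(G)\setminus V_G}\subseteq\overline{F(M(H)\setminus\Sing H)}$.

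Now assume $G$ does \emph{not} satisfy (b), i.e.\ $\overline{M(G)\setminus V_G}\cap V_G\not\subseteq\{0\}$. By the Curve Selection Lemma I would choose an analytic arc $\gamma(s)$ in the closed semianalytic set $\overline{M(G)\setminus V_G}\cap V_G$ with $\gamma(0)=0$ and $\gamma(s)\neq 0$ for $s>0$. The decisive step, which I expect to be the main obstacle, is to rule out that $\gamma$ runs through the regular locus $V_G\setminus\Sing G$, since Theorem~\ref{tp} only controls the intersection with $\Sing G$ and not with all of $V_G$. Here I would reuse the monotonicity trick from the proof of Lemma~\ref{topl1}: for each fixed $s$ with $\gamma(s)\in V_G\setminus\Sing G$, approximating $\gamma(s)$ by points of $M(G)\setminus V_G$ and passing to the limit in the linear-dependence relation of Proposition~\ref{pm} (where $\nabla G_1,\dots,\nabla G_k$ are independent, $G$ being a submersion there) shows $\gamma(s)\in\operatorname{span}\{\nabla G_1(\gamma(s)),\dots,\nabla G_k(\gamma(s))\}=(T_{\gamma(s)}V_G)^{\perp}$. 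Since $\gamma'(s)\in T_{\gamma(s)}V_G$, this gives $\tfrac{d}{ds}\|\gamma(s)\|^2=2\langle\gamma(s),\gamma'(s)\rangle=0$, so $\|\gamma(s)\|^2$ is constant and hence identically $0$ because $\gamma(s)\to 0$, contradicting $\gamma(s)\neq 0$. By the usual semianalytic dichotomy this forces $\gamma(s)\in\Sing G$ for all small $s>0$.

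Combining this with the closure inclusion of the second step, $\gamma(s)\in\overline{F(M(H)\setminus\Sing H)}\cap\Sing G$ with $\gamma(s)\neq 0$, so condition~\eqref{regularity condition} fails; by Theorem~\ref{tp} this means $H$ does not satisfy the Milnor condition (b), which is the contrapositive of the converse and completes the equivalence. Apart from the monotonicity/analyticity step isolating the regular locus of $V_G$, the whole argument is bookkeeping with the hypothesis $F(M(H))=M(G)$ together with Theorems~\ref{incl} and~\ref{tp}; that regular-locus step is the only place where one must genuinely exploit analyticity rather than quote a previous result.
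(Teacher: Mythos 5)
Your proof is correct, and it takes the route the paper itself intends: the paper offers no argument for Corollary~\ref{rec} beyond citing \cite{ADRS} and remarking that it is an ``immediate consequence of Theorem~\ref{tp}''. Your forward direction, quoting Theorem~\ref{incl} after weakening $F(M(H))=M(G)$ to an inclusion, is exactly that. The real content of your write-up is the converse, where you correctly identify and fill the gap hidden in the word ``immediate'': Theorem~\ref{tp} only controls $\overline{F(M(H)\setminus \Sing H)}\cap \Sing G$, whereas condition~\eqref{eq:mainclass} for $G$ concerns all of $V_G$, so one must separately rule out accumulation of $M(G)\setminus V_G$ onto the regular locus $V_G\setminus\Sing G$ near the origin. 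Your curve-selection-plus-monotonicity step (the same device as Lemma~\ref{topl1}, going back to Milnor) does this correctly: at a point $q$ of the regular locus lying in $\overline{M(G)\setminus V_G}$ one gets $q\perp T_qV_G$ (note that $M(G)$ is closed by the determinant characterization following Proposition~\ref{pm}, so you do not even need to pass to the limit in the dependence relation by hand), and then $\tfrac{d}{ds}\|\gamma(s)\|^2=0$ along any analytic arc of such points kills the arc, forcing the failure of condition (b) to be witnessed inside $\Sing G$, where Theorem~\ref{tp} applies. This step is genuinely needed and is not routine bookkeeping, so your assessment of where the work lies is accurate.

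Two remarks on hypotheses. First, you assume $\Disc G=\{0\}$; this is indeed absent from the statement of Corollary~\ref{rec} as printed, but it is required to invoke Theorem~\ref{tp} at all, so it is implicit in the paper's setting (it is a standing assumption in \cite{ADRS}), and flagging it is appropriate rather than a defect. Second, you need not assume $\Disc H=\{0\}$ separately: it follows from $\Disc F=\Disc G=\{0\}$, since if $H(x)\neq 0$ then $F(x)\notin\Sing G$ (otherwise $G(F(x))=0$) and $x\notin\Sing F$ (otherwise $F(x)=0$), so $dH_x=dG_{F(x)}\circ dF_x$ is surjective; this gives $\Sing H\subseteq V_H$, which is all your first inclusion uses.
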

		
		\vspace{0.2cm}
		
		We can find interesting applications associated with the Milnor condition (b) study for composition maps. The existence of the Milnor tube fibration for composition provides a powerful tool for relating the geometric and topological data for the Milnor fiber of $H$ to those of the Milnor fibers of $F$ and $G$, which shows that Milnor's conjecture is not only valid in the case where $G$ is the canonical projection, but in a more general case as was discussed in \cite{ADRS,CT23}. Furthermore, in \cite{ADRS}, the authors exhibit formulas that relate the Euler characteristic of the corresponding Milnor Fibers and Milnor tubes. See \cite{ADRS} for more details.

			\vspace{0.2cm}
			
			\subsection{Comparing Thom Regularity at $V_{G}$ and the Milnor Condition (b)}

			\vspace{0.5cm}
			
			We stated that the transversality of the fibers of a map $G$ to the levels of a function $\rho$, which defines the origin, i.e., the \textit{$\rho$-regularity} of $G$, is a condition for the existence of local fibration structures. We also saw that the Thom regularity conditions of $V_G$ and the Milnor condition (b) at the origin imply the $\rho$-regularity of $G$. Therefore, these conditions have a close relationship with the existence of fibration structures such as the Milnor fibration over the tube and spheres. This connection justifies the need to create strategies to find natural and convenient conditions to verify Thom regularity and the Milnor condition (b) and to compare these two conditions to build new and interesting classes of maps $G$ with such conditions, as well as to study the singularities of already known classes.
			
			\vspace{0.2cm}
			
			Our next result helps to understand the relationship between the Thom regularity of a map $G$ and the Milnor condition (b) at the origin.

			\vspace{0.2cm}
			
			\begin{proposition}\label{ppp}
				Let $G: (\mathbb{R}^m, 0) \to (\mathbb{R}^p,0)$ be a real analytic germ. If $G$ is Thom regular in $V_G$, then $G$ satisfies the Milnor condition (b).
			\end{proposition}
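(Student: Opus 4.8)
The plan is to chain together the two transversality results already in place. Proposition \ref{pre-p1}, reinforced by Remark \ref{obs1}, converts Thom regularity into a submersion statement for the restriction of $G$ to small spheres, while Lemma \ref{f2L3} characterizes the Milnor condition (b) precisely in terms of that same submersion. Since the paper restricts attention to case (1), I will assume throughout that $G$ has an isolated critical value, $\Disc G = \{0\}$, which is exactly the setting in which both of these results were stated; under this hypothesis ``Thom regular in $V_G$'' means classical Thom regularity.

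First I would fix an arbitrary sufficiently small $\e > 0$. Because $G$ is Thom regular at $V_G$, Proposition \ref{pre-p1} produces a neighborhood $N_\e$ of $S^{m-1}_\e \cap V_G$ in $S^{m-1}_\e$ such that $S^{m-1}_\e \pitchfork_x G^{-1}(G(x))$ for every $x \in N_\e \setminus V_G$. As recorded in Remark \ref{obs1}, the nonemptiness of $S^{m-1}_\e \cap V_G$ then yields $\eta$ with $0 < \eta \ll \e$ for which $S^{m-1}_\e \cap G^{-1}(B^{p}_\eta \setminus \{0\}) \subset N_\e$. Consequently the restriction \eqref{rest1}, namely $G_{|}\colon S^{m-1}_\e \cap G^{-1}(B^{p}_\eta \setminus \{0\}) \to B^{p}_\eta \setminus \{0\}$, is a smooth submersion.

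Finally I would invoke Lemma \ref{f2L3} in the direction from the submersion property to the Milnor condition (b) (the ``conversely'' implication in its proof). The previous step delivers, for every sufficiently small $\e$, a value $\eta$ with $0 < \eta \ll \e$ making \eqref{rest1} a submersion, which is exactly the right-hand side of the equivalence in Lemma \ref{f2L3}; hence $\overline{M(G) \setminus V_G} \cap V_G \subseteq \{0\}$, as desired. The argument is essentially a bookkeeping of implications, so I do not anticipate a serious obstacle; the only point demanding care is the order of quantifiers. One must ensure that the submersion conclusion holds uniformly for all small $\e$ (with an $\e$-dependent $\eta$ permitted), since that is the precise form required by Lemma \ref{f2L3}. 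This is guaranteed because Proposition \ref{pre-p1} and Remark \ref{obs1} apply to every sufficiently small $\e$ rather than to a single radius, and tracking this uniformity is the only delicate step.
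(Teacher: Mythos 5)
Your proposal is correct and follows exactly the paper's own argument: the paper's proof is the one-line statement that the result ``directly follows from Remark \ref{obs1} and Lemma \ref{f2L3},'' which is precisely the chain you spell out (Proposition \ref{pre-p1} $\Rightarrow$ Remark \ref{obs1} $\Rightarrow$ the submersion \eqref{rest1} $\Rightarrow$ Milnor condition (b) via Lemma \ref{f2L3}). Your explicit attention to the isolated-critical-value hypothesis and the quantifier order (every small $\e$, with $\e$-dependent $\eta$) only makes explicit what the paper leaves implicit.
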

			
			\begin{proof}
				It directly follows from Remark \ref{obs1} and Lemma \ref{f2L3}.
			\end{proof}
			
			\vspace{0.2cm}
			
			Therefore, Thom regularity implies the Milnor condition (b) at the origin. However, the converse of this result is not valid, as shown by the authors in \cite{ACT}. In this paper, the authors presented Example \eqref{T} below, which satisfies the Milnor condition (b) at the origin but does not satisfy Thom regularity. This example demonstrated that the conditions are not equivalent; in particular, it also showed that existence theorems for fibration structures based on the Milnor condition (b) are more general than those based on the Thom regularity of the map. Finally, in \cite{PT} and \cite{R}, the authors discussed the size of the class of maps satisfying the Milnor condition (b) concerning the class of maps satisfying Thom regularity, and in \cite{R}, the author constructed an infinity of new examples of real analytic maps that have Milnor fibrations over the tube without Thom regularity.
			
			\vspace{0.2cm}
			
			The example below was the first example of an analytic function that has a Milnor fibration over the tube without satisfying Thom regularity in $V_G$, showing that the converse of Proposition \ref{ppp} is not true in general. This example was developed by Mihai Marius Tib\u ar and presented in the article \cite{ACT}. Subsequently, other examples of such mixed functions, were presented by Mutso Oka, Anne Pichon, José Seade, and Maico Ribeiro.
			
			\vspace{0.2cm}
			
			\begin{example}\label{T}
				$T:\bC^2\to \bC$ given by $T(x,y)=xy\bar{x}$.
			\end{example}
			
			\vspace{0.2cm}
			
			In the article \cite{PT}, the authors found a new family of mixed functions satisfying the Milnor (b) condition without satisfying Thom regularity. The family is presented in the example below.
			
			\vspace{0.2cm}
			
			\begin{example}\label{pt}
				$P_k:\bC^3\to \bC$ given by $P_k(x,y,z)=(x+z^k)\bar{x}y$, with $k\ge 2$.
			\end{example}
			
			\vspace{0.2cm}
			
			The next result, proved in the article \cite{R}, provides a mechanism for constructing an infinity of mixed functions satisfying the Milnor condition (b)  without satisfying Thom regularity. 
			
			\vspace{0.2cm}
			
			\begin{theorem}\cite{R}\label{ttt}
				Let $f:(\mathbb{C}^n,0) \to (\mathbb{C},0)$ be mixed functions with isolated critical value, without Thom regularity. Let  
				$g:(\mathbb{C}^m,0) \to (\mathbb{C},0)$ be an horizontally weakly conformal map such that $f$ and $g$ have separable variables, and $G=f+g$  satisfies the Milnor condition (b).
				Then, $G$  has Milnor tube fibration without Thom regularity.
			\end{theorem}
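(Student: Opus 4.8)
The plan is to deduce both halves of the statement from the structural results already in place, handling the fibration and the failure of Thom regularity separately. First I would record the discriminant and regularity data of the two summands. Since $f$ has isolated critical value we have $\Disc f = \{0\}$, and since $g$ is horizontally weakly conformal, Theorem \ref{tma} supplies both $\Disc g = \{0\}$ and the fact that $g$ is classically Thom regular at $V_g$. Because $f$ and $g$ have separable variables, the identity $\nabla G_j(x,y) = (\nabla f_j(x), \nabla g_j(y))$ (recorded in the separable-variables discussion) forces $\Sing G \subset \Sing f \times \Sing g$ and hence $\Disc G = \{0\}$; in particular $G$ has isolated critical value, which is the standing hypothesis needed by Lemma \ref{f2L3}.

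For the existence of the Milnor tube fibration I would use the hypothesis that $G$ satisfies the Milnor condition (b). By Lemma \ref{f2L3}, applicable precisely because $\Disc G = \{0\}$, this is equivalent to the restriction \eqref{rest1} being a smooth submersion for $0 < \eta \ll \e$, which is exactly the $\rho$-regularity of $G$. With $\Disc G = \{0\}$ and $G$ now known to be $\rho$-regular, Theorem \ref{ttf} applies verbatim and yields the Milnor tube fibration \eqref{tmil}.

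For the failure of Thom regularity the key is the contrapositive of Theorem \ref{t2}. I would check its hypotheses one by one: $f$ and $g$ are in separable variables, $\Disc f = \{0\}$, $\Disc g = \{0\}$, and $g$ is classically Thom regular, all established above. The remaining condition $\Sing f \times \Sing g \subset \Sing G$ follows from the remark immediately after Theorem \ref{t2}: since $g$ is horizontally weakly conformal, the remark following the definition of such maps gives $\Sing g = \{\,y \mid Jg(y)=0\,\}$, which is exactly the hypothesis under which that remark guarantees the inclusion. With every hypothesis met, Theorem \ref{t2} asserts that classical Thom regularity of $G$ would force classical Thom regularity of $f$; since $f$ is assumed to have \emph{no} Thom regularity, $G$ cannot be classically Thom regular. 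Combining the two parts gives that $G$ has a Milnor tube fibration without Thom regularity.

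I do not expect a genuine obstacle, since every ingredient is a previously proved or cited result and the real content is bookkeeping. The one point that deserves care is the inclusion $\Sing f \times \Sing g \subset \Sing G$ demanded by Theorem \ref{t2}: it is \emph{not} automatic from Thom regularity of $g$ alone, and I would stress that it is the stronger hypothesis that $g$ is horizontally weakly conformal, giving $\Sing g = \{\,Jg=0\,\}$, that supplies it. The only other subtlety is to confirm that the Thom regularity produced by Theorem \ref{tma} is in the classical sense, which holds because $\Disc g = \{0\}$ there.
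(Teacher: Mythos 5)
Your proposal is correct. The survey itself states Theorem \ref{ttt} without proof (it is cited from \cite{R}), but your reconstruction uses exactly the machinery the paper assembles around it, in the intended way: Theorem \ref{tma} plus the separable-variables discussion give $\Disc G=\{0\}$, Lemma \ref{f2L3} together with Theorem \ref{ttf} give the Milnor tube fibration, and the contrapositive of Theorem \ref{t2} (the paper's generalization of \cite[Theorem 2]{R}) gives the failure of Thom regularity. Your point of care is also the right one: the inclusion $\Sing f\times \Sing g\subset \Sing G$ needed by Theorem \ref{t2} is not automatic, and it is the horizontally weakly conformal hypothesis on $g$, via $\Sing g=\{Jg=0\}$ and the remark following Theorem \ref{t2}, that supplies it.
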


			\vspace{0.2cm}
			
			\begin{example}
				Consider the following mixed functions:
				$f(x,y)=xy^k \bar{x}$, for a fixed $k\geq 1$. 	One has that $f$ has isolated critical value and it is not Thom regular at $ V_{f} $, see \cite{R} for more details.  	Now, consider the mixed function $G:=f + g$ where $$g(z,w)=z^{k+n}\bar{w}^n$$ is  horizontally weakly conformal map. One has that $ G(x,y,z,w)=xy^k\bar{x}+z^{k+n}\bar{w}^n $ is a polar weighted-homogeneous of type $ (1,1,k+n,n;k) $, for any $ k,n\ge 1 $, consequently,  $ G $ satisfies the Milnor condition (b), see \cite{ACT} for more details. Since $ f $ and $ g $ have separable variables, it follows from   Theorem \ref{ttt}  that $ G $ has Milnor tube fibration without Thom regularity. 
			\end{example}
			
			\vspace{0.3cm}

			Now, we recall that if $ A $ is a subset of a topological space $ X, $ the \textit{interior} of $ A $ is defined as the union of all open sets of $ X $ that are contained in $ A $. Therefore,  $ A $ has an \textit{empty interior in $ X $} if $ A $ contains no open set of $ X $ other than the empty set. Equivalently, $ A $ has an empty interior if every point of $ A $ is a limit point of the complement of $ A$, i.e., if the complement of $ A $ is dense in $ X $, see \cite{Mu}.
			
			\vspace{0.3cm}
			
			Next, if $ X $ is a topological space and  $ A \subset X$, we consider the set $ {Bd(A)}:= (\overline{X \m A}) \cap A.$ Then, $Int(A) \textrm{  and  } {Bd(A)}$ are disjoint, and $ \bar{A} = Int(A) \cup {Bd(A)}. $ 
			
			\vspace{0.3cm}
			
			The next proposition is the real counterpart to \cite[Proposition 10]{RSR}.
			
			\vspace{0.3cm}
			
			\begin{proposition}\label{f2prop2}
				Let $G:(\mathbb{R}^m,0) \to (\mathbb{R}^p,0)$ be a germ of an analytic map with an isolated critical value such that $V_G \subset M(G)$. If $V_G$ has an empty interior in $M(G)$ and $\textrm{dim}V_G>0$, then $ G $ is not Thom $(\aa_G)$-regular at $V_G$.
				
			\end{proposition}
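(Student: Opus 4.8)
The plan is to argue by contradiction, exploiting the implication ``Thom regularity $\Rightarrow$ Milnor condition (b)'' that was just established in Proposition \ref{ppp}. Suppose, toward a contradiction, that $G$ were Thom $(\aa_G)$-regular at $V_G$. Since $G$ has an isolated critical value, Proposition \ref{ppp} applies and guarantees that $G$ satisfies the Milnor condition (b), i.e.
$$\overline{M(G)\setminus V_G}\cap V_G\subseteq \{0\}.$$
The whole point is then to show that this inclusion is incompatible with the two standing hypotheses, that $V_G$ has empty interior in $M(G)$ and that $\dim V_G>0$.

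First I would rephrase the empty-interior hypothesis as a density statement, using exactly the characterization recalled immediately before the proposition: $V_G$ having empty interior in $M(G)$ means that $M(G)\setminus V_G$ is dense in $M(G)$, so that every point of $M(G)$, and in particular every point of $V_G$, is a limit point of $M(G)\setminus V_G$. Because $V_G\subseteq M(G)$ by hypothesis, this gives the inclusion $V_G\subseteq \overline{M(G)\setminus V_G}$ as germs at the origin. Intersecting with $V_G$ and using $V_G\subseteq V_G$, I obtain
$$V_G=\overline{M(G)\setminus V_G}\cap V_G.$$

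Combining the two displays immediately yields $V_G\subseteq\{0\}$ as a germ at the origin, contradicting $\dim V_G>0$. Hence $G$ cannot be Thom $(\aa_G)$-regular at $V_G$, which is the assertion. The argument is essentially formal once the empty-interior assumption is translated into density of $M(G)\setminus V_G$ in $M(G)$; the only point requiring a little care is the passage between the closure taken relatively inside $M(G)$ (implicit in the empty-interior characterization) and the ambient closure appearing in condition \eqref{eq:mainclass}, but these agree after intersecting with $V_G$ since $\overline{M(G)\setminus V_G}^{\,M(G)}=\overline{M(G)\setminus V_G}\cap M(G)$ and $V_G\subseteq M(G)$. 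I do not expect any serious obstacle beyond keeping the germ/neighborhood conventions consistent throughout.
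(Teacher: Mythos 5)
Your proof is correct and follows essentially the same route as the paper's: translate the empty-interior hypothesis into density of $M(G)\setminus V_G$ in $M(G)$, deduce $V_G=\overline{M(G)\setminus V_G}\cap V_G$, and conclude via Proposition \ref{ppp} that Thom regularity would force $V_G\subseteq\{0\}$, contradicting $\dim V_G>0$. The only difference is presentational (you argue by contradiction where the paper uses the contrapositive), and your explicit remark on relative versus ambient closures is a point the paper glosses over.
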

			\begin{proof}
				If the complement of $ V_G $ is dense in $ M(G) $, then $ V_G \subset  \overline{M(G) \m V_G} $, which implies $ \overline{M(G) \m V_G} \cap V_G = V_G$. In general, since $ V_G $ has an empty interior in $ M(G) $, then $ Int(V_G) = \emptyset $. It follows from the previous discussion  that $\overline{V}_G = {Bd(V_G)}$. Now, since $ V_G $ 
				is closed we get $ \overline{M(G) \m V_G} \cap V_G = V_G$. Agora basta usar a Proposição \ref{ppp}.
			\end{proof}
			
			\vspace{0.3cm}
			
			\begin{remark} 	Consider $G (x, y, z) = (x, y (x ^ 2 + y ^ 2) + xz ^ 2)$ in three real variables. One has that $V=\left\lbrace x=y=0\right\rbrace $ and $M (G) = \left\lbrace z=0\right\rbrace \cup \left\lbrace x=y=0\right\rbrace $. Hence, $\overline{M(G)\m V} \cap V = \left\{0\right\}$. This example does not contradict the Proposition \ref{f2prop2} since $ V_G $ does not have an empty interior in $ M(G)$. \end{remark}
			
			\vspace{0.2cm}
			
			\begin{example}\label{e1}
				Consider the mixed map $ G:\mathbb{C}^3 \to \mathbb{C}^2$ given by $G(x,y,z)= (x^2 - y^2z,y) $. By straightforward computations, one can see that $V=\Sing G =\left\lbrace x=y=0 \right\rbrace$ and $ \left\lbrace x=0\right\rbrace \subset M(G). $ Consequently, $ V_G $ has an empty interior in $ M(G). $ Since $\textrm{dim} V_G >0 $, it follows from Proposition \ref{f2prop2} that $ G $ does not satisfy the condition \eqref{eq:mainclass} and does not have the Thom regularity condition.	
			\end{example}

	\end{document}